\documentclass[12pt]{article}
\usepackage{amsmath,amssymb,color}
\usepackage{enumerate}
\usepackage[utf8]{inputenc}
\usepackage{amsthm,hyperref,mathtools,}
\usepackage{enumitem}
\usepackage[mathscr]{euscript}
\usepackage{bm}
\usepackage{babel}

\usepackage{nccmath} 

\newtheorem{thm}{Theorem}[section]
\newtheorem{definition}[thm]{Definition}

\newtheorem{corollary}[thm]{Corollary}

 \newtheorem{cor}[thm]{Corollary}
 \newtheorem{prop}[thm]{Proposition}
 \newtheorem{remark}[thm]{Remark}
 \newtheorem{lemma}[thm]{Lemma}

 \newcommand {\R}{\mathbb{R}}
 
  \newcommand {\SH}{{\bf H}}
 \newcommand {\M}{{\cal M }}

 \newcommand {\RN}{\mathbb{R}^N}

 \newcommand{\be}{\begin{equation}}
 \newcommand{\ee}{\end{equation}}

 \newcommand{\bd}{\begin{description}}
 \newcommand{\ed}{\end{description}}
 \newcommand{\e}{\varepsilon}

 \newcommand{\bdp}{\begin{displaymath}}
 \newcommand{\edp}{\end{displaymath}}
 \newcommand{\bea}{\begin{eqnarray}}
 \newcommand{\eea}{\end{eqnarray}}
 \newcommand{\brr}{\begin{eqnarray*}}
 \newcommand{\err}{\end{eqnarray*}}

\begin{document}
\title{ Nonlinear Schr\"odinger systems with all attractive forces}
\author{{ Jaeyoung Byeon$^1$, Junyoung Heo$^1$, Zhi-Qiang Wang$^{2,3}$}\\
{\it\small $^1$ Department of Mathematical Sciences, KAIST}\\
{\it\small 291 Daehak-ro, Yuseong-gu,  Daejeon 305-701,  Republic of Korea}\\
{\it\small $^2$ College of Mathematics and Statistics,
	Fujian Normal University}\\
{\it\small
	Fuzhou, Fujian 350117, P. R. China}\\
{\it\small $^3$ Department of Mathematics and Statistics,
 	Utah State University}\\
{\it\small 
 	Logan, Utah 84322, USA}}
 
\maketitle

\begin{abstract} In this paper we investigate in a systematic way the solution structure of nonnegative solutions for coupled nonlinear Schr\"odinger systems in the all attractive regime. 
For all frequencies equal case we obtain results on Morse index of synchronized positive vector solutions and nonnegative semi-vector solutions as well as their kernel of linearized systems at these solutions. We establish a variational characterization of synchronized positive vector solutions and as applications we give a new existence result about positive vector solutions for the general systems with arbitrary frequencies.
We also examine the synchronization phenomenon of positive vector solutions, and prove that if the interaction matrix has exactly one positive eigenvalue, any positive vector solution is synchronized. Finally we provide examples of domains for which synchronization of positive solutions fails to hold if  the interaction matrix has at least two positive eigenvalues. 
Our results reveal the effect  of the spectral information of the interaction matrix of the couplings and geometry of a domain on the solution structure.
\vskip .2in
\noindent{{\textbf{MSC2020}: 35J47, 35J50, 35J61, 35Q55.}}
\vskip .2in
\noindent{Keywords: nonlinear Schr\"odinger systems, Morse index of solutions, Synchronization of positive vector solutions}

\end{abstract}

\tableofcontents

\section{Introduction}
In this paper, we study the structure of nonnegative solutions for the following 
coupled nonlinear Schr\"odinger system 
\begin{equation}\label{eq1}
- \Delta u_i + \lambda_i u_i = \sum_{j=1}^n a_{ij}u_iu_j^2 \quad \text{in } \Omega, \quad i=1,\cdots,n,\end{equation}
where $u_i \in H \equiv H^1_0(\Omega)$ for $1\leq i\leq n$ and $\Omega$ is a 
domain in $\mathbb{R}^N$ for $N \le 3.$
Here the parameters $a_{ij}$ representing an interaction between components $u_i$ and $u_j$ satisfy $a_{ij}=a_{ji} \ge 0$ for $1\leq i,j\leq n$. 
If $\Omega = \mathbb{R}^N$, we define 
$H$ to be the subspace of radially symmetric functions in $H^1(\mathbb{R}^N)$. 
We denote the space of $N-$times product of $H$ by $\mathbf{H} \equiv H^n$.  A solution $\vec{u} = (u_1,\dots,u_n)$ of \eqref{eq1} is called 
a vector solution of \eqref{eq1} if $u_i \ne 0$ for each $i =1,\dots,n$. 
A vector 
solution $\vec{u} \in \mathbf{H}$ is called a positive vector solution of 
\eqref{eq1} if $u_i > 0$ in $\Omega$ for each $i =1,\dots,n$. A solution $\vec{u}$ 
of \eqref{eq1} is called a semi-vector nonnegative solution if $u_i \ge 0$ in $
\Omega$ for all $1 \le i \le n$, with $u_i > 0$ and $u_j = 0$ for some distinct 
indices $i,j \in \{1,\dots,n\}$. We note that a semi-vector nonnegative solution
is a positive vector solution of the same type of system with a smaller number of equations. Thus, we are naturally interested in the positive vector solutions.
Systems of this type arise in mathematical 
physics, most notably in the theory of multi-species Bose-Einstein 
condensations \cite{CLLL, MS, R}. Physically, the positive coupling constants 
$a_{ij}$ ($i\neq j$) represent attractive interactions between components $u_i$ and $u_j$, whereas negative coupling 
constants represent repulsive interactions. Following the pioneering work of Lin 
and Wei \cite{LW1}, an extensive body of literature has explored cases involving 
purely attractive, purely repulsive, and mixed couplings (see, e.g., \cite{AC, 
ACR, BDW, BW, BWW, BKS, CZ1, Corr1, Corr2, CFT, DWW, dFL, IT, LW2, LcyW1, LLC, 
LLW, LWz1, LWz2, MMP, NR, NTTV, PW, SW1, S, S1, ST, TTVW, TV, TW1, WW1, 
WW11, WW12, WY, ZW} and the references therein), revealing a rich and complex 
solution structure. From a mathematical perspective, understanding the 
structure of positive vector solutions to \eqref{eq1} poses significant 
challenges. These stem primarily from the coexistence of numerous semi-vector solutions and the fact that positive vector solutions may fail to exist for 
certain regimes of coupling parameters. Early research successfully 
established the existence and classification of positive vector solutions, though 
predominantly for two-component systems ($n=2$). For purely attractive 
couplings, results concerning systems with a large number of components 
remain scarce; thus for general $n \ge 2,$ it is very challenging to find the optimal conditions of $\lambda_i, i=1,\cdots,n,$ and $a_{ij}, 1\le i,j \le n,$ for an existence of positive vector solutions(refer to a recent survey paper \cite{B2}).

In this paper, we focus on purely attractive forces and 
systematically investigate the solution structure according to the algebraic property of the interaction matrix $A_{n\times n}=(a_{ij})$.
We mainly consider a special yet pivotal class where the linear coefficients $
\lambda_j$ are identical, though as applications of our results for this special class we will also consider the general case with different $\lambda_i's$ 
give both perturbation and non-perturbation type results. We set $\lambda_1 = \dots = 
\lambda_n = \lambda> 0$ and reduce \eqref{eq1} to
\begin{equation} \label{eq3}- \Delta u_i + 
\lambda u_i = \sum_{j=1}^n a_{ij}u_iu_j^2 \quad \text{in } \Omega, \quad u_i \in H, \quad 
1\leq i\leq n.\end{equation}
Solutions to the system \eqref{eq3} correspond to critical points of 
the associated $C^2$ energy functional $I: \mathbf{H} \to \mathbb{R}$, defined 
by $$I(\vec{u}) \equiv \frac{1}{2} \|\vec{u}\|_{\Omega}^2 - \frac{1}{4} \int_{\Omega} 
\sum_{i,j=1}^{n} a_{ij} u_i^2 u_j^2 \, dx,$$
where for $p \ge 1$ and $
\vec{u} = (u_1,\dots,u_n) \in \mathbf{H}$, we use notations
\[ |u|_{p, \Omega}^p = \int_{\Omega}|u|^p\, dx, \quad \|u\|_{\Omega}^2 = 
|\nabla u|_{2, \Omega}^2 + \lambda |u|_{2, \Omega}^2, \quad \|\vec{u}\|_\Omega = \left(  \sum_{i=1}^n \|u_i\|_{\Omega}^2 \right)^{1/2}.\] 
The Morse index of a solution $\vec{u}$ 
to \eqref{eq3} is defined as the maximal dimension of a subspace $W \subset 
\mathbf{H}$ such that $$\{\vec{\phi} \in W \setminus \{0\} \mid I^{\prime\prime}
(\vec{u})(\vec{\phi},\vec{\phi}) < 0\}.$$
Let $A=(a_{ij})_{1\leq i,j\leq n}$ denote the 
interaction matrix. Under the assumption of purely attractive forces $a_{ij}\geq 
0$ for all $1\leq i,j\leq n$, we explore the interplay between the eigenvalues and eigenfunctions of $A$ and the Morse index of the nonnegative solutions 
of \eqref{eq3}.
 We also study the synchronization phenomenon, wherein the 
components of a solution are mutually proportional by positive constants. 
More precisely, let $U \in H$ be a positive solution to the scalar equation
\begin{equation}\label{singleeq}\Delta u - \lambda u + u^3 = 0 \text{ in } 
\Omega, \quad u = 0 \text{ on } \partial\Omega.\end{equation}
We define the subspaces $\mathbb{R} U = \{cU \mid c\in\mathbb{R}\} \subset H$ 
and $\mathbb{R}^n U = \{\vec{c}U 
\mid \vec{c}\in\mathbb{R}^n\} \subset \mathbf{H}$.  A positive vector solution $\vec{u}$ of \eqref{eq3} is said to be synchronized if $\vec{u} \in \mathbb{R}^n U$.
It is easy to check that a synchronized positive vector solution $\vec{c}U \in \mathbb{R}^n U$
exists if and only if there exists a vector $\vec{c}=(c_1,\cdots,c_n) \in \mathbb{R}^n$ satisfying $c_1,\cdots,c_n > 0$ and 
\begin{equation}
\label{liposol} 
A
\begin{pmatrix}
c_1^2\\
c_2^2\\
\vdots\\
c_n^2
\end{pmatrix}
=
\begin{pmatrix}
1\\
1\\
\vdots\\
1
\end{pmatrix}.
\end{equation}

We have two primary objectives to accomplish in this paper. On one hand, we give precise results on the Morse index of the synchronized positive vector solutions and the synchronized non-negative semi-vector solutions in terms of the eigenvalues and eigenvectors associated with the interaction matrix $A$, and we also completely characterize the kernel of the linearization at these solutions. We establish a variational formulation for synchronized positive vector solutions to be ground state solutions. Then as applications of these results, we consider more general systems for which the linear frequencies $\lambda_i'$s are not assumed to be the same and give a new sufficient condition for the ground states to be positive vector solutions. Also as applications of the Morse index results we provide local bifurcation results in terms of interaction parameters. 
  On the other hand, with the existence of the synchronized positive vector solutions, we want to examine whether these special solutions are the only positive vector solutions, i.e., whether all positive vector solutions are the synchronized ones, leading to uniqueness of positive solutions in radially symmetric domains. We prove a very general result that if $A$ has exactly one positive eigenvalue and if every row of $A$ is a non-zero vector then any positive vector solution must be synchronized. This in turn gives the uniqueness of positive vector solutions for \eqref{eq3} if \eqref{liposol} has a  solution and non-existence of positive vector solutions if \eqref{liposol} has no solution. Conversely for a large class of non-singular matrices $A$ having at least two eigenvalues we construct non-synchronized positive vector solutions on some domains for which the uniqueness of positive solutions does not hold for the scalar equation (\ref{singleeq}).
  
  In the following, among other things for reader's convenience we introduce some of our main results informally, and we refer the exact statements to the specific sections.
  
\vskip .1in

\noindent {\bf Morse Index of synchronized positive vector solutions and semi-vector solutions.} 
In Section 2, we 
establish explicit relationship between the spectral and algebraic properties of the interaction matrix $A$ and the Morse index of both synchronized positive vector solutions and nonnegative synchronized semi-vector  solutions of \eqref{eq3} [Theorems \ref{T1}, \ref{T1-1}]. 

\noindent{\bf Theorem \ref{T1}} 
{\it Suppose that there exists a positive vector solution $\vec{u} =(c_1,c_2,\cdots,c_n)U =\vec{c}U \in\R^nU$ to the system \eqref{eq3}.  Then the following results hold.
\begin{itemize}
\item [(a)]
The Morse index of $\vec{u}$ equals the number of positive eigenvalues $\lambda_1 \ge \cdots \ge \lambda_k > 0$ of $A$, up to their multiplicity. 
Moreover,  a maximal negative space $W$ of $I^{\prime\prime}(\vec{u})$ satisfying 
\[
 I''(\vec{u})(\vec{\phi},\vec{\phi})<0\ \mathrm{for}\ \vec{\phi}\in W\setminus \{0\} 
\]
is characterized by a linear span of
\[ \Big \{(\vec{\gamma} \oslash \vec{c})U \ | \  A\vec{\gamma} = \lambda_i \vec{\gamma}
 \ i  =1,\cdots,k\Big \}.\]
 \item[(b)] The vector solution $\vec{u}$ is nondegenerate in $\SH$ if and only if $U$ is nondegenerate in $H$ and $\det (A) \neq 0$. \end{itemize}}(see details in Theorem \ref{T1} in Section 2.2)

 When the vector solution $\vec{u}$ is degenerate,  we also give a complete characterization of
the kernel of a linearized system \eqref{eq3} at $\vec{u}\in \R^nU$ in Theorem  \ref{T1}. 

\noindent{\bf Theorem \ref{T1-1}}.
{\it Let $1\leq m< n$ and $c_1,c_2,\cdots,c_m>0$ and $\vec{u}=(c_1,c_2,\cdots,c_m,0,\cdots,0)U\in\R^nU$ be a semi-vector nonnegative solution to a system \eqref{eq3}. Define a matrix $B$ by
\begin{equation}
\label{B}
b_{ij}=
\begin{cases}
    2a_{ii}c_i^2+\sum_{k=1}^n a_{ik}c_k^2\ \mathrm{if}\ i=j,\\
    2a_{ij}c_ic_j\ \mathrm{if}\ i\neq j, \nonumber
\end{cases}
\end{equation} $A_m=(a_{ij})_{1\leq i,j\leq m}$, $B_{m}=(b_{ij})_{1\leq i,j\leq m}$, and $f(x) = \max\{i \in \mathbb{N} \ | \ \omega_i < x\},$ where
 $   f(x) = 0$ if $\{i \in \mathbb{N} \ | \ \omega_i < x\} = \emptyset.$ Denote by $\omega_1 = 1 < \omega_2 \le \cdots$ the eigenvalues of multiplicity one with the corresponding eigenfunctions $\{\psi_i\}_{i=1}^\infty$ satisfying $
    \Delta\psi_i-\lambda \psi_i+\omega_i U^2\psi_i=0, \psi_i\in H.
$
Then the following results hold.
\begin{itemize}
\item[(a)] The Morse index of $\vec{u}$ equals
    \begin{equation}\label{eigenformula}
       the \  number \ of \ the \ positive \ eigenvalues \ of A_m +\sum_{i=m+1}^n f(b_{ii}).
    \end{equation}
\item[(b)] $\vec{u}$ is nondegenerate if $U$ is nondegenerate in $H$, $\det(A_m) \ne 0$ and $b_{kk} \notin \{\omega_1,\omega_2,\cdots\}$ for $k =m+1,\cdots,n.$
\end{itemize}}
When the semi-vector solution $\vec{u}$ is degenerate we also give a complete characterization of
the kernel of a linearized system \eqref{eq3} at $\vec{u}\in \R^nU$ in terms of the linearized kernel of $U$ and the eigenvectors corresponding to eigenvalue $0$ of $A$ in Theorem \ref{T1-1}. 
If a degeneracy occurs for a synchronized positive vector solution, we find a necessary condition for a bifurcation of positive vector solutions under a perturbation of the interaction matrix [Theorem \ref{B2}], describing the local structure of synchronized vector positive solutions.

Section 3 is devoted to Morse index results for systems of large interspecies force (namely the coupling constants $a_{ij}$ for $i\neq j$ are relatively larger than $a_{ii}$ for $i=1,...,n$). We establish optimal upper bounds on Morse index of synchronized vector solutions in Theorems \ref{C3} \ref{MM3}.
 
\vskip .1in

\noindent{\bf Variational construction of positive vector solutions.}
We establish a sufficient and necessary condition on the interaction matrix $A$ for a ground state solution to be a synchronized positive vector solution.
\\
\noindent{\bf Theorem \ref{V1}}.
{\it Assume that  $\lambda_1=\cdots=\lambda_n = \lambda.$ 
The following two assertions are equivalent.\\
\begin{itemize}
\item[(a)] The interaction matrix $A=(a_{ij})_{1\leq i,j\leq n}$ with $a_{ij} \ge 0$ has eigenvalues $\mu_1 > 0 \ge  \mu_2 \ge \cdots \ge \mu_n$ and the convex hull
of column vectors $\{\vec{a}_{i}\}_{i=1}^n$ with $\vec{a}_i = (a_{i1},\cdots,a_{in})$
contains an interior point $(e,\cdots,e)$ for some $e > 0;$\\
\item[(b)] The following minimization
 \[ M_\lambda \equiv \inf\Big \{I(\vec{u}) \ \Big | \ I^\prime(\vec{u})(\vec{u}) = 0 \ \text{ for } \ \vec{u} \in \SH \setminus\{0\} \Big \}\]
 is attained by  a positive vector solution $\vec{c} U \in \R^n U $ of \eqref{eq3} where $U$ is a least energy solution  of \eqref{singleeq}.
 \end{itemize}}
 While in Theorem \ref{V1} the synchronized positive vector solution is characterized as a global minimizer of a co-dimension $1$ manifold, we also prove in Theorem \ref{V2} that under the conditions of $A$ being nonsingular and $U$ being nondegenerate a synchronized positive vector solution $\vec{c} U$ is also a local minimizer of a co-dimension $n$ manifold.
 
 

Using the variational characterizations of the synchronized positive vector solutions in Theorems \ref{V1}, we give a new sufficient condition in Theorem \ref{LET} for the minimizer on the co-dimension $1$ manifold to be a positive vector solution of the more general systems \eqref{eq1} where the linear frequencies $\lambda_i'$s are not assumed to be the same. We point out that this is not a perturbation type result and contains some known existing results as special cases for the general existence of positive vector solution. \\
\noindent{\bf Theorem \ref{LET}}. {\it 
Suppose that the symmetric interaction matrix $A=(a_{ij})_{1\leq i,j\leq n}$ with $a_{ij} >  0$ has eigenvalues $\mu_1 > 0 >  \mu_2 \ge \cdots \ge \mu_n$ and
there exist $c_1,\cdots,c_n > 0$ satisfying
 \[A \begin{pmatrix}
    c_1^2\\ \vdots\\ c_n^2
\end{pmatrix} = \begin{pmatrix}
    1\\ \vdots\\ 1\end{pmatrix}.\]
Then, for $\lambda_1,\cdots,\lambda_n > 0,$ there exists a positive vector solution of \eqref{eq1}with the least energy among all nontrivial solutions
if  \begin{equation}\label{VarCondi}\Big(\frac{\max\{\lambda_1,\cdots,\lambda_n\}}{ \min\{\lambda_1,\cdots,\lambda_n\}}\Big)^{2-\frac{N}{2}}
< 1+ |\mu_2|\frac{\min_{1\le j\le n} \big(\sum_{i=1}^n(-1)^{i+j}\det(( a_{pq})_{p\neq i,q\neq j})\big)^2 }{\det(A)\sum_{i,j=1}^n(-1)^{i+j}\det(( a_{pq})_{p\neq i,q\neq j})}.\end{equation}
}
Theorem \ref{LET} is a generalized version of the previous results in  \cite{CFT,LLC,LWz2}, where the authors consider restricted classes of interaction matrices $A =(a_{ij}),$ typically $a_{ij} = a$ for $i\ne j$. 

\vskip .1in

\noindent{\bf Synchronization and nonsynchronization of positive vector solutions.} While the question on synchronization of positive vector solutions has been completely settled for the case of two equations in \cite{CLWY} and \cite{WY}, the question remains largely open for the case $n\geq 3$.
We establish a general result here. \\
\noindent{\bf Theorem \ref{SYN}}. {\it
Assume that a symmetric matrix $A=(a_{ij})_{1\leq i,j\leq n}$ has exactly one positive eigenvalue and satisfies $a_{ij} \ge 0$ for any $1\leq i,j\leq n$, $\sum_{j=1}^na_{ij} > 0$ for each $i=1,\cdots,n.$
Then, any positive vector solution $\vec{u} =(u_1,\cdots,u_n)$ of \eqref{eq3}  is synchronized.}

This in turn gives a uniqueness result of positive vector solutions when the scalar equation \eqref{singleeq} has a unique positive solution.
Also we have a general result on non-existence of vector positive solutions.

\noindent{\bf Corollary \ref{non}}. {\it 
Suppose that $A=(a_{ij})_{1\leq i,j\leq n}$ has only one positive eigenvalue and and satisfies $a_{ij} \ge 0$ for any $1\leq i,j\leq n$, $\sum_{j=1}^na_{ij} > 0$ for each $i=1,\cdots,n.$
Then, \eqref{eq3} has no positive vector solutions if there exists no vector $\vec{c}=(c_1,\cdots,c_n) \in \R^n$ satisfying $\sum_{j}a_{ij}c_j^2 =1 $ and $c_i \ne 0$ for each $i =1,\cdots,n.$}

The synchronization fails for general $n\ge2$ when the interaction matrix $A$ has  more than two positive eigenvalues as we see in the following (we refer the precise conditions and statement to Section 5).

\noindent{\bf Theorem \ref{NSYN}}. {\it 
Let $N=2,3$. For an interaction matrix $A=(a_{ij})_{1\le i,j\le n}$ in a dense subset of the class of nonsingular matrices with at least two positive eigenvlaues, there exists a  domain $\Omega\subset\R^N$  such that  \eqref{eq3}  has a nonsynchronized positive vector solution.}
\\

Throughout the paper, we use the following notations.
For two vectors $\vec{a}=(a_1,\cdots,a_n)$, $\vec{b}=(b_1,\cdots,b_n) \in \RN$
the Hadamard product ${\vec{a}}\odot{\vec{b}}$ is given by $(a_1b_1,\cdots,a_nb_n)$,
and the Hadamard division ${\vec{a}}\oslash{\vec{b}}$ is given by $(a_1/b_1,\cdots,a_n/b_n)$ whenever they make sense.

\section{Morse index and bifurcation of synchronized  solutions}
In this section, we assume that $U \in H$ is a  least energy positive solution of
\eqref{singleeq};
thus the Morse index of the solution $U$ is $1.$
The argument for the calculation of the Morse index for a solution 
$\vec{u} \in \R^n U$ in this section can be adapted to get the Morse index of a solution $\vec{u} \in \R^n U$ for any positive solution $U$ of \eqref{singleeq} with a higher Morse index.

\subsection{Morse index of synchronized positive vector solutions}
In this section, we aim to calculate the Morse index of a positive vector solution $\vec{u}\in\R^nU$ to the system \eqref{eq3} when the interaction matrix $A=(a_{ij})_{1\leq i,j\leq n}$ satisfies $a_{ij}\geq0$ for $1\leq i,j\leq n$.
We note that since $U$ is a positive solution of \eqref{singleeq}, 
$\vec{u}=(c_1,\cdots,c_n)U$ is a positive vector solution to the system \eqref{eq3} if and only if $c_1,\cdots,c_n>0$ and
\begin{equation} 
\label{Ac=1}
A
\begin{pmatrix}
c_1^2\\
c_2^2\\
\vdots\\
c_n^2
\end{pmatrix}
=
\begin{pmatrix}
1\\
1\\
\vdots\\
1
\end{pmatrix}.
\end{equation}
The equation \eqref{Ac=1} has a solution $(c_1,\cdots,c_n), c_i > 0$ if and only if
$(1,\cdots,1)$ is in the interior of the minimal convex cone containing column vectors of $A.$
If $\det(A) \ne 0,$ the solvability of the equation \eqref{Ac=1} is equivalent to
\[ \text{det}(A) \sum_{i=1}^nC_{ij}= \text{det}(A) \sum_{i=1}^nC_{ji} > 0, \ \ j = 1,\cdots,n,\]
where $C_{ij}$ is a cofactor defined by $(-1)^{i+j}\text{det}(( a_{pq})_{p\neq i,q\neq j})$.
The equivalence also holds without the assumption $a_{ij}\geq 0$ for $1\leq i,j\leq n$.
We prepare some Lemmas. The following is a result for symmetric matrices with nonnegative entries, which is similar to the  Perron-Frobenius theorem for matrices with positive entries.
\begin{lemma}
\label{M0}
Let $B=(b_{ij})_{1\leq i,j\leq n}$  be a nontrivial symmetric matrix satisfying $b_{ij}=b_{ji}\geq0$ for $1\leq i,j\leq n$. 
If there exist a positive eigenvalue $\nu$ and a corresponding eigenvector $(c_1,\cdots,c_n)$ of $B$ with $c_1,\cdots,c_n > 0,$ then
 \[ -\nu < \mu \le \nu \ \textup { for any eigenvalue } \ \mu \textup{ of } B.\] 
\end{lemma}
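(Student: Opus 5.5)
The idea is to compare an arbitrary eigenvector of $B$ with the positive eigenvector $\vec{c}=(c_1,\cdots,c_n)$, using the nonnegativity of the entries $b_{ij}$. Let $\mu$ be an eigenvalue of $B$ with a real eigenvector $\vec{x}=(x_1,\cdots,x_n)\neq 0$ (real because $B$ is symmetric). Set $y_i = x_i/c_i$, and choose an index $i_0$ with $|y_{i_0}| = \max_i |y_i| > 0$. Reading off the $i_0$-th row of $B\vec{x}=\mu\vec{x}$ and using $b_{ij}\ge 0$, $c_j>0$ gives
\[
|\mu|\,|y_{i_0}|\,c_{i_0} = \Big|\sum_j b_{i_0 j} y_j c_j\Big| \le \sum_j b_{i_0j}|y_j| c_j \le |y_{i_0}| \sum_j b_{i_0 j}c_j = |y_{i_0}|\,\nu c_{i_0}.
\]
Hence $|\mu|\le \nu$, which already gives the upper bound $\mu\le\nu$. (An alternative for the upper bound: if $\mu>\nu$, its eigenvector would be orthogonal to $\vec{c}$, and a Rayleigh quotient argument with $|\vec{x}|$ gives a contradiction. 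The max-ratio argument is simpler, so I use it.)

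The main step is to exclude $\mu=-\nu$. Suppose $B\vec{x}=-\nu\vec{x}$ and normalize $\vec{y}$ so that $\max_i|y_i|=1$. Let $S=\{i : |y_i|=1\}$. For $i\in S$, equality must hold throughout the chain above. The second inequality becomes an equality only if $|y_j|=1$ whenever $b_{ij}>0$. The first becomes an equality only if all nonzero terms $b_{ij}y_jc_j$ share a sign, and since $\mu<0$ that sign is opposite to the sign of $y_i$. Consequently, for $i\in S$ every neighbour $j$ (meaning $b_{ij}>0$) lies in $S$ and satisfies $y_j=-y_i$. In particular $b_{ii}=0$ for all $i\in S$, because $j=i$ would force $y_i=-y_i$. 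So the set $S$ is closed under taking neighbours.

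To reach a contradiction I will use orthogonality with the Perron vector. Since $B$ is symmetric and $-\nu\neq\nu$ (because $\nu>0$), we have $\vec{x}\perp\vec{c}$, i.e. $\sum_i y_i c_i^2=0$. This alone does not finish, because $S$ need not be everything. Instead I use the vector $\vec{z}=|\vec{x}|=(|x_1|,\cdots,|x_n|)$. From $\nu|x_i|=|\sum_j b_{ij}x_j|\le \sum_j b_{ij}|x_j|$ we get $B\vec{z}\ge \nu\vec{z}$ componentwise. Because $\vec{c}>0$ and $B$ is symmetric,
\[
\langle \vec{c}, B\vec{z}-\nu\vec{z}\rangle = \langle B\vec{c},\vec{z}\rangle - \nu\langle\vec{c},\vec{z}\rangle = 0,
\]
so $B\vec{z}=\nu\vec{z}$. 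Therefore the triangle inequality $|\sum_j b_{ij}x_j|\le\sum_j b_{ij}|x_j|$ holds with equality in every row. Combined with the sign reversal ($\sum_j b_{ij}x_j=-\nu x_i$), this says that for every $i$ with $x_i\neq 0$ and every $j$ with $b_{ij}>0$ and $x_j\neq 0$, the signs of $x_j$ and $x_i$ are opposite.

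Now form the Rayleigh form on $\vec{z}$:
\[
\nu|\vec{z}|^2=\langle B\vec{z},\vec{z}\rangle=\sum_{i,j} b_{ij}|x_i||x_j|=-\sum_{i,j} b_{ij}x_ix_j=-\langle B\vec{x},\vec{x}\rangle=\nu|\vec{x}|^2,
\]
which is consistent, so this does not close the argument by itself. Instead, use that $\vec{z}$ and $\vec{c}$ are both nonnegative $\nu$-eigenvectors, together with the bipartite sign structure. The diagonal entries $b_{ii}$ vanish on the support of $\vec{x}$, and the support splits into $P=\{x_i>0\}$ and $Q=\{x_i<0\}$, with edges inside the support running only between $P$ and $Q$.

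The hard part is turning this into a contradiction. I expect this is where the paper relies on strict positivity of some entries or on irreducibility: if $B$ is bipartite, $-\nu$ genuinely \emph{can} be an eigenvalue. For example, $B=\begin{pmatrix}0&1\\1&0\end{pmatrix}$ with $\vec{c}=(1,1)$ has eigenvalue $-1$. So the strict inequality $-\nu<\mu$ fails as literally stated unless $B$ carries extra structure. In the intended application, $B$ is the matrix defined in Theorem \ref{T1-1}, whose diagonal entries $b_{ii}\ge\sum_k a_{ik}c_k^2>0$ are strictly positive. I would therefore add the hypothesis that $b_{ii}>0$ for all $i$ (or that every row is nonzero with positive diagonal). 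Under that hypothesis the conclusion $b_{ii}=0$ for $i\in S$ from the previous step is an immediate contradiction, which proves $\mu>-\nu$.
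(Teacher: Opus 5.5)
Your counterexample is correct, so the lemma is false as stated: for $B=\begin{pmatrix}0&1\\1&0\end{pmatrix}$ and $\vec c=(1,1)$ we have $\nu=1$, yet $\mu=-1=-\nu$ is an eigenvalue. The paper's own proof fails at the step claiming that $b_{ij}=0$ for $i\in\{1,\dots,k\}$ and $j\in\{l,\dots,n\}$. Since $\langle Bx,x\rangle=-\nu$, the equality $|\langle Bx,x\rangle|=\langle By,y\rangle$ reads $\sum_{i,j}b_{ij}\bigl(|x_i||x_j|+x_ix_j\bigr)=0$. This forces $b_{ij}=0$ only when $x_ix_j>0$, i.e.\ for $i,j$ in the \emph{same} sign class. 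Vanishing across the two sign classes is the equality case for $\mu=+\nu$, not for $-\nu$. With the correct conclusion, the signs of $Bx$ agree with $Bx=-\nu x$ and no contradiction appears. This is exactly your bipartite structure: in your example $x=(-1,1)/\sqrt2$ and $b_{12}=1\neq0$. More generally, $-\nu$ is excluded precisely when no irreducible block of $B$ has a bipartite graph.

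Your repair (adding $b_{ii}>0$) is the right one and is all the paper needs. The lemma is used in Lemma \ref{MI} and in the proof of Theorem \ref{T1} for the matrix \eqref{matrixB}, whose diagonal is $b_{ii}=2a_{ii}c_i^2+1\ge1$. In Theorem \ref{T1-1} it enters only through $B_m$, which has the same form. Your justification via the full $B$ of Theorem \ref{T1-1} is slightly off, since there $b_{kk}=\sum_{j\le m}a_{kj}c_j^2$ may vanish for $k>m$. Also, only the upper bound $\mu\le 3$ is used in the Morse index count, so nothing downstream breaks.

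Compared with the paper, you obtain $|\mu|\le\nu$ by a row-wise max-ratio (Collatz--Wielandt) estimate. The paper instead uses a Rayleigh-quotient argument: $\nu_{\max}$ is attained at a nonnegative vector, which cannot be orthogonal to $\vec c$, and then $|\langle Bx,x\rangle|\le\langle By,y\rangle$. Your argument does not need symmetry for this bound. For the equality case, your analysis on $S$ gives $b_{ii}=0$ for $i\in S$, while the corrected quadratic-form argument gives $b_{ii}=0$ on the whole support of $x$. Either one closes the proof once $b_{ii}>0$ is assumed. Your detour through $\vec z=|\vec x|$ is correct but unnecessary.
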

\begin{proof}
We define 
\[ \nu_{max} \equiv  \max \{\langle Bx,x\rangle \ | \ |x| =1\}.\]
Since $b_{ij} \ge0,$ $\nu_{max}$ is attained by a nonzero vector $(d_1,\cdots,d_n)$ in $\R^n$ with $d_i \ge 0.$
If $\nu_{max} \ne \nu,$ we see that $(c_1,\cdots,c_n)$ is orthogonal to 
$(d_1,\cdots,d_n)$; this is not possible since $c_i > 0, i=1,\cdots,n.$
Thus,   $\nu_{max} =\nu.$
Let $\mu$ be an eigenvalue of $B$.
It is obvious that $ \mu \le \nu.$
Suppose that $\mu < -\nu.$
For a unit eigenvector $x=(x_1,\cdots,x_n) \in \R^n$ of $\mu$,
let $y = (|x_1|,\cdots,|x_n|)$, which is also a unit vector.
Then we see that
\begin{equation} \label{maes} \nu < |\mu| = | \langle Bx,x \rangle | \le \langle By,y \rangle \le \nu. \end{equation}
This is a contradiction; thus $-\nu \le \mu \le \nu.$
Suppose that $\mu = -\nu.$
In this case, by a coordinate change, we may assume that for some $k \in \{1,\cdots,n-1\}$ and $l  \in \{k+1,\cdots,n\},$
\[ x_1,\cdots,x_k < 0 =x_{k+1} = \cdots = x_{l-1} < x_l,\cdots,x_n.\]
As in \eqref{maes}, we see that
\[ \nu = |\mu| = | \langle Bx,x \rangle | \le \langle By,y \rangle \le \nu. \]
This implies that $b_{ij} = 0$ for $i = 1,\cdots,k$ and $j=l,\cdots,n.$
Then, the first $k$ components of $Bx$ are not positive and the last $(n-l+1)$ are not negative. This contradicts that $Bx=-\nu x.$
Thus, we conclude that $ -\nu < \mu \le \nu.$
This completes the proof.
\end{proof}
We now prove a lemma that enables us to considerably simplify the calculation of the Morse index of $\vec{u}$ when the interaction matrix $A$ has nonnegative entries.
\begin{lemma}
\label{MI}
Let $\vec{u}\in\R^nU$ be a positive vector solution to a system \eqref{eq3} with an interaction matrix $A=(a_{ij})_{1\leq i,j\leq n}$ satisfying $a_{ij}=a_{ji}\geq0$ for $1\leq i,j\leq n$. Then, the Morse index of $\vec{u}$ equals the dimension of $\{\vec{\phi}\in\R^nU\ |\ I''(\vec{u})(\vec{\phi},\vec{\phi})<0\}.$
\end{lemma}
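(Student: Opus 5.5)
The plan is to split every test direction $\vec{\phi}\in\SH$ into a component in $\R^nU$ and a component that is "orthogonal" to $U$. The aim is to show that $I''(\vec{u})$ has no cross terms between the two parts and is nonnegative on the second part. The inequality $\ge$ in the statement is trivial, so only $\le$ needs proof.

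\emph{Step 1: the quadratic form.} For $\vec{u}=\vec{c}U$, differentiate the quartic term twice and use $\sum_j a_{ij}c_j^2=1$ from \eqref{Ac=1}. This gives
\[
I''(\vec{u})(\vec{\phi},\vec{\phi})=\sum_{i=1}^n\Big(\|\phi_i\|_\Omega^2-\int_\Omega U^2\phi_i^2\Big)-2\int_\Omega U^2\sum_{i,j=1}^n a_{ij}c_ic_j\phi_i\phi_j
=\sum_{i=1}^n\|\phi_i\|_\Omega^2-\int_\Omega U^2\,\vec{\phi}^{\,T}M\vec{\phi},
\]
where $M=I_n+2DAD$ and $D=\mathrm{diag}(c_1,\dots,c_n)$.

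\emph{Step 2: bounding $M$.} The matrix $DAD$ is symmetric with nonnegative entries. It satisfies $DAD\,\vec{c}=DA(c_1^2,\dots,c_n^2)^T=D(1,\dots,1)^T=\vec{c}$, so it has eigenvalue $1$ with the positive eigenvector $\vec{c}$. Lemma \ref{M0} then shows that all eigenvalues of $DAD$ lie in $(-1,1]$. Hence $M\le 3I_n$ as quadratic forms.

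\emph{Step 3: the decomposition.} Write $\phi_i=t_iU+w_i$ with $t_i\in\R$ and $\int_\Omega U^3w_i=0$; this is always possible since $\int U^4>0$. Testing \eqref{singleeq} against $w_i$ gives $\langle U,w_i\rangle_H=\int U^3w_i=0$, where $\langle\cdot,\cdot\rangle_H$ is the inner product of $\|\cdot\|_\Omega$. Together with $\int U^2\cdot U w_j=0$, this makes every cross term between $\vec{t}U$ and $\vec{w}$ vanish. So $I''(\vec{u})(\vec{\phi},\vec{\phi})=q(\vec{t})+r(\vec{w})$, where $q(\vec t)=I''(\vec u)(\vec tU,\vec tU)$ and $r$ is the form of Step 1 evaluated at $\vec w$.

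\emph{Step 4: the key inequality and the main obstacle.} The main point is that $\|w\|_\Omega^2\ge 3\int U^2w^2$ for every $w\in H$ with $\int U^3w=0$. I would derive this from the Morse index $1$ of $U$ as follows. Let $Q(\psi)=\|\psi\|_\Omega^2-3\int U^2\psi^2$; then $Q(U)=-2\int U^4<0$. The cross term $\langle U,w\rangle_H-3\int U^3w$ vanishes, so $Q(sU+w)=s^2Q(U)+Q(w)$. If $Q(w)<0$, then $Q$ would be negative definite on $\mathrm{span}\{U,w\}$, contradicting Morse index $1$. Combining this with Step 2 gives
\[
r(\vec w)\ge\int_\Omega U^2\big(3|\vec w|^2-\vec w^{\,T}M\vec w\big)\ge0 .
\]

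\emph{Step 5: conclusion.} Let $W$ be a negative subspace of $I''(\vec u)$, and let $P:W\to\R^n$ be the linear map $\vec\phi\mapsto\vec t$. If $P\vec\phi=0$ with $\vec\phi\ne0$, then $I''(\vec u)(\vec\phi,\vec\phi)=r(\vec w)\ge0$, which is impossible; so $P$ is injective. For nonzero $\vec\phi\in W$ we have $q(P\vec\phi)\le I''(\vec u)(\vec\phi,\vec\phi)<0$. Hence $\{(P\vec\phi)U:\vec\phi\in W\}$ is a negative subspace of $I''(\vec u)$ inside $\R^nU$ of dimension $\dim W$. This proves $\le$ and hence the lemma.
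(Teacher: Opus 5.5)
Your proof is correct, and it decomposes in the opposite variable from the paper's proof.

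\textbf{The paper's route.} The paper diagonalizes the $n\times n$ matrix $B=I_n+2DAD$ (your $M$) in an orthonormal eigenbasis $\vec\eta^{\,i}$ and writes $\vec\phi=\sum_i\chi_i\vec\eta^{\,i}$ with arbitrary $\chi_i\in H$. This splits $I''(\vec u)$ into $n$ scalar forms $\int_\Omega|\nabla\chi|^2+\lambda\chi^2-\mu_iU^2\chi^2$. Since $U$ is the first weighted eigenfunction with eigenvalue $1$, and Morse index $1$ of $U$ caps the index of each such form with $\mu_i\le 3$ at $1$, each form has index $1$ if $1<\mu_i\le 3$ and $0$ otherwise. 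This holds both on $H$ and on $\R U$, so both sides of the lemma equal the number of eigenvalues of $B$ in $(1,3]$.

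\textbf{Your route.} You split in the function variable instead, $H=\R U\oplus\{w:\int_\Omega U^3w=0\}$. This splitting is orthogonal both for $\langle\cdot,\cdot\rangle_H$ and for $I''(\vec u)$. You get nonnegativity of the complementary block from $M\le 3I_n$ together with the gap inequality $\|w\|_\Omega^2\ge 3\int_\Omega U^2w^2$, which you derive cleanly from Morse index $1$ by the two-dimensional contradiction. Injectivity of $\vec\phi\mapsto\vec t$ then gives ``$\le$'' without ever computing the index.

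\textbf{Comparison.} Both arguments use the same two inputs: Lemma~\ref{M0}, giving top eigenvalue $3$ for $B$ with eigenvector $\vec c$, and Morse index $1$ of $U$. Yours is more direct and makes the weighted-eigenvalue step fully explicit, which the paper leaves somewhat terse. The paper's version buys two things yours does not. It gives the explicit count of eigenvalues of $B$ in $(1,3]$. It also sets up the eigen-decomposition of $B$ that the paper reuses for the kernel in Theorem~\ref{T1} and adapts for semi-vector solutions in Theorem~\ref{T1-1}. There, diagonal entries $b_{kk}$ may exceed $3$ and directions outside $\R^nU$ genuinely contribute to the index, so your block-nonnegativity argument would not carry over.
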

\begin{proof}

Let $\vec{u}=(c_1,c_2,\cdots,c_n)U$. The second derivative of the energy functional $I$ with respect to $\vec{\phi}=(\phi_1,\phi_2,\cdots,\phi_n)\in \SH$ is given by
\begin{align}
I''(\vec{u})(\vec{\phi},\vec{\phi})&=\lVert\vec{\phi}\rVert_{\Omega}^2-\int_{\Omega}U^2\sum_{i,j=1}^n a_{ij}(c_j^2\phi_i^2+2c_ic_j\phi_i\phi_j)dx\\
&=\lVert\vec{\phi}\rVert_{\Omega}^2-\int_{\Omega}U^2\sum_{i,j=1}^n\phi_i b_{ij}\phi_jdx,
\end{align}
where
\begin{equation} \label{matrixB}
b_{ij}=
\begin{cases}
    2a_{ii}c_i^2+\sum_{k=1}^n a_{ik}c_k^2 \ \ \ \mathrm{if} \ \ \ i=j,\\
    2a_{ij}c_ic_j \ \ \ \mathrm{if} \ \ \ i\neq j.
\end{cases}
\end{equation}
We define $B=(b_{ij})_{1\leq i,j\leq n}$. Then, \eqref{Ac=1} implies $b_{ii}=2a_{ii}c_i^2+1$ for $i=1,2,\cdots,n$, and
\begin{equation} \label{3ev}
(B-I)
\begin{pmatrix}
c_1\\
c_2\\
\vdots\\
c_n
\end{pmatrix}
=
\begin{pmatrix}
2a_{ij}c_ic_j
\end{pmatrix}_{1\leq i,j\leq n}
\begin{pmatrix}
c_1\\
c_2\\
\vdots\\
c_n
\end{pmatrix}
=
\begin{pmatrix}
2\sum_{j=1}^n a_{1j}c_1c_j^2\\
2\sum_{j=1}^n a_{2j}c_2c_j^2\\
\vdots\\
2\sum_{j=1}^n a_{nj}c_nc_j^2
\end{pmatrix}
=
\begin{pmatrix}
2c_1\\
2c_2\\
\vdots\\
2c_n
\end{pmatrix}.
\end{equation}
Therefore, number $3$ is an eigenvalue of $B$ with corresponding eigenvector $(c_1,c_2,\cdots,c_n)$, which is a vector with positive components. Since $B$ is a symmetric matrix with nonnegative entries, all other eigenvalues $\mu$ of $B$ must satisfy $-3 < \mu\leq 3$ by Lemma \ref{M0}.

Next, we decompose $\vec{\phi}$ as in \cite{BKS}. Let $\{\vec{\eta}^1,\vec{\eta}^2,\cdots,\vec{\eta}^n\}$ be an orthonormal system of eigenvectors corresponding to eigenvalues $\mu_1$, $\mu_2$, $...$, $\mu_n$ of $B$, respectively. Denote the $j$-th component of $\vec{\eta}^i$ as $\eta_j^i$, and define
\begin{equation}
\label{decompose}
\chi_i=\sum_{j=1}^{n}\eta_j^i\cdot \phi_j,\quad 1\leq i\leq n.
\end{equation}
Then, $\vec{\phi}$ is expressed as
\begin{equation}
\label{decomposephi}
\vec{\phi}=\sum_{i=1}^{n}\chi_{i}\vec{\eta}^i,
\end{equation}
so that
\begin{equation}
\label{decomposeresult}
\begin{split}
I''(\vec{u})(\vec{\phi},\vec{\phi})&=\sum_{i}\lVert\chi_i\rVert_{\Omega}^2-\int_{\Omega}U^2\cdot\sum_{i}\mu_i\chi_i^2dx\\
&=\sum_{i}\int_{\Omega}\lvert\nabla\chi_i\rvert^2+\lambda\chi_i^2-\mu_i U^2\chi_i^2dx
\end{split}
\end{equation}
holds. Therefore, the Morse index of $\vec{u}$ is the sum of the maximal dimension of a subspace $W_i\subset H$ satisfying
\begin{equation}
\label{wi}
\int_{\Omega}\lvert\nabla\chi\rvert^2+\lambda\chi^2-\mu_i U^2\chi^2dx<0\quad\mathrm{for}\quad\chi\in W_i\setminus\{0\},
\end{equation}
for $i=1,2,\cdots,n$. Now, since the Morse index of $U$ is 1, the maximal dimension of $W\subset H^1_0(\Omega)$ satisfying 
\begin{equation}
\int_{\Omega}\lvert\nabla\chi\rvert^2+\lambda\chi^2-3 U^2\chi^2dx<0\quad\mathrm{for}\quad\chi\in W\setminus\{0\}
\end{equation}
is 1. Since $U$ is the first eigenfunction of the linear eigenvalue problem $-\Delta w+ \lambda w=\tilde{\lambda} U^2w$ with eigenvalue $\tilde{\lambda}=1$, we see that
\begin{equation}
\label{UU}
\int_{\Omega}\lvert\nabla U\rvert^2+\lambda U^2-U^2\cdot U^2dx=0
\end{equation}
and
\begin{equation}
\label{Uchi}
\int_{\Omega}\lvert\nabla \chi\rvert^2+\lambda\chi^2-U^2\chi^2dx\geq0
\end{equation}
holds for any $\chi\in H$. Then we conclude that the maximal dimension of subspace $W_i\subset H$ satisfying \eqref{wi} is 1 if $1<\mu_i\leq 3$ and 0 if $\mu_i\leq 1$. This implies that the Morse index of $\vec{u}$ equals the number of eigenvalues $\mu$ of $B$ such that $1<\mu\leq 3$.

We now consider the maximal dimension of a subspace $W'\subset \R^nU$ satisfying
\begin{equation}
\label{w'}
I''(\vec{u})(\vec{\phi},\vec{\phi})<0\quad\mathrm{for}\quad\vec{\phi}\in W'\setminus\{0\}.
\end{equation}
By \eqref{decompose} and \eqref{decomposephi}, we have $\vec{\phi}\in \R^nU$ if and only if $\chi_i\in\R U$ for $i=1,2,\cdots, n$. From the decomposition \eqref{decompose}, we see that the maximal dimension of such $W'$ equals the sum of the maximal dimension of a subspace $W_i'\subset\R U$ satisfying
\begin{equation}
\label{wi'}
\int_{\Omega}\lvert\nabla\chi\rvert^2+\lambda\chi^2-\mu_i U^2\chi^2dx<0\quad\mathrm{for}\quad\chi\in W_i'\setminus\{0\},
\end{equation}
for $i=1,2,\cdots,n$. By \eqref{UU}, the maximal dimension of such $W_i'$ is 1 if $1<\mu_i\leq 3$ and 0 if $\mu_i\leq1$. Therefore, the maximal dimension of $W'\subset \mathbb{R}^nU$ satisfying \eqref{w'} equals the number of eigenvalues $\mu$ of $B$ such that $1<\mu\leq 3$. Thus, the Morse index of $\vec{u}$ equals the dimension of $\{\vec{\phi}\in\R^nU\ |\ I''(\vec{u})(\vec{\phi},\vec{\phi})<0\}.$
\end{proof}

We now establish a theorem that enables us to directly calculate the Morse index of a positive vector solution $\vec{u}\in\R^nU$ in terms of the eigenspaces of the interaction matrix $A$.
Since $A$ is symmetric, there are $n$ eigenvalues of $A$.
Since we consider a symmetric matrix $A$ with nonnegative entries, there exists at least one positive eigenvalue as far as $A \ne 0.$
For $r > 0,$  we define \[S_r^+ \equiv\{(c_1,\cdots,c_n) \in \R^n \ | \ c_1^2+\cdots + c_n^2 = r, c_i > 0, i = 1,\cdots,n\}.\]
\begin{thm} 
\label{T1}
Suppose that there exists a positive vector solution $\vec{u} =(c_1,c_2,\cdots,c_n)U =\vec{c}U \in\R^nU$ to a system \eqref{eq3} with an interaction matrix $A=(a_{ij})_{1\leq i,j\leq n}$ satisfying $a_{ij}=a_{ji}\geq0$ for $1\leq i,j\leq n$. Then the following conclusions hold.
\begin{itemize}
\item [(i)]
The Morse index of $\vec{u}$ equals the number of positive eigenvalues $\lambda_1 \ge \cdots \ge \lambda_k > 0$ of $A$, up to their multiplicity. 
Moreover,  a maximal negative space $W$ of $I^{\prime\prime}(\vec{u})$ satisfying 
\[
 I''(\vec{u})(\vec{\phi},\vec{\phi})<0\ \mathrm{for}\ \vec{\phi}\in W\setminus {\{0\} }
\]
is characterized by a linear span of
\[ \Big \{(\vec{\gamma} \oslash \vec{c})U \ | \  A\vec{\gamma} = \lambda_i \vec{\gamma}
 \ i  =1,\cdots,k\Big \}.\]
\item[(ii)] If $U$ is nondegenerate in $H$ and $\det (A) = 0$,
the linearized kernel of the system \eqref{eq3} at $\vec{u}\in \R^nU$ is given
by $\{ (\vec{\gamma} \oslash \vec{c})) U \ | \ A \vec{\gamma} = 0\};$ 
 in this case, for $K \equiv \{ \vec{\gamma}=(\gamma_1,\cdots,\gamma_n)  \ | \ A \vec{\gamma} = 0\} $,
 \[ L \equiv \big\{(d_1,\cdots,d_n) \ | \ (d_1,\cdots,d_n)U \in \R^n U \text{ is a positive vector solution of } \eqref{eq3} \big \} \]
is characterized by
\[
L = \big \{ \big( (c_1^2+\gamma_1)^{1/2}, \cdots, (c_n^2+\gamma_n)^{1/2}\big)\ |\  \vec{\gamma}\in K \text{ with } \ c_i^2+\gamma_i >0 , i=1,\cdots,n \big \}
\]
and is a connected smooth $\mathrm{dim}(K)$-dimensional manifold in $ S^+_{c_1^2+\cdots+c_n^2}$  with \[I(\vec{u}) =I((d_1,\cdots,d_n)U), \ \ \partial L\subset \partial S^+_{c_1^2+\cdots+c_n^2}. \]
\item [(iii)]
If $U$ is degenerate in $H$ and $\det (A) \ne 0$,
the linearized kernel of equation \eqref{eq3} at $\vec{u}\in\R^nU$ contains 
\[\{ (c_1,\cdots,c_n) \psi \ | \   \Delta \psi - \lambda\psi + 3U^2 \psi = 0, \psi \in H\}.\]
Moreover, if $U$ is degenerate in $H$,   $\det (A) \ne 0$
and  $a_{ij} > 0$ for all $1 \le i,j \le n,$
the linearized kernel of equation \eqref{eq3} at $\vec{u}\in\R^nU$ is spanned by 
\[\{ (c_1,\cdots,c_n) \psi \ | \   \Delta \psi - \lambda\psi + 3U^2 \psi = 0, \psi \in H\}.\]
\item[(iv)] The vector solution $\vec{u}$ is nondegenerate in $\SH$ if and only if $U$ is nondegenerate in $H$ and $\det (A) \neq 0$. 
\end{itemize}
\end{thm}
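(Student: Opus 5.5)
The plan rests on one algebraic identity. For $D=\mathrm{diag}(c_1,\dots,c_n)$, the matrix $B$ of \eqref{matrixB} satisfies, by \eqref{Ac=1},
\[
B = I + 2DAD ,
\]
since $b_{ii}=1+2a_{ii}c_i^2$ and $b_{ij}=2a_{ij}c_ic_j$ for $i\neq j$. The matrix $DAD$ is congruent to $A$ because $D$ is invertible. So by Sylvester's law of inertia, the number of eigenvalues $\mu$ of $B$ with $\mu>1$ equals the number of positive eigenvalues of $A$. In the same way, the number of eigenvalues $\mu=1$ equals $\dim\ker A$.

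\textbf{Part (i).} The proof of Lemma \ref{MI} shows that the Morse index equals $\#\{\mu_i\in(1,3]\}$, and Lemma \ref{M0} gives $\mu_i\le 3$. Hence the Morse index is the number of positive eigenvalues of $A$. For the negative space I work inside $\R^nU$, which is enough by Lemma \ref{MI}. Using $\|U\|_\Omega^2=\int U^4$, for $\vec d\in\R^n$,
\[
I''(\vec u)(\vec dU,\vec dU)=\int_\Omega U^4\,\bigl(|\vec d|^2-\vec d^{\,T}B\vec d\bigr)=-2\int_\Omega U^4\,(D\vec d)^TA(D\vec d).
\]
Put $\vec\gamma=D\vec d$, that is, $\vec d=\vec\gamma\oslash\vec c$. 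The form is then negative definite exactly on the span of $\{(\vec\gamma\oslash\vec c)U : \vec\gamma\text{ in the positive eigenspaces of }A\}$. This span has dimension $k$, so it is a maximal negative space.

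\textbf{Kernel, parts (ii)--(iv).} Use the decomposition \eqref{decomposephi}. A vector $\vec\phi$ lies in the kernel iff each $\chi_i$ solves
\[
-\Delta\chi_i+\lambda\chi_i=\mu_iU^2\chi_i .
\]
The weighted eigenvalues of $-\Delta+\lambda$ with weight $U^2$ are $\omega_1=1<\omega_2\le\cdots$. Because $U$ has Morse index $1$, we have $\omega_2\ge 3$, with $\omega_2>3$ exactly when $U$ is nondegenerate. Since $\mu_i\in(-3,3]$, only $\mu_i=1$ and $\mu_i=3$ can give nonzero $\chi_i$:
\begin{itemize}
\item If $\mu_i=1$, then $\chi_i\in\R U$.
\item If $\mu_i=3$, then $\chi_i$ lies in the linearized kernel of $U$.
\end{itemize}
The eigenvectors of $B$ for $\mu=1$ are $\ker(DAD)=D^{-1}\ker A=\{\vec\gamma\oslash\vec c : A\vec\gamma=0\}$. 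This gives the kernel description in (ii), and also shows the kernel is trivial when $U$ is nondegenerate and $\det A\neq0$.

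By \eqref{3ev}, $\vec c$ is an eigenvector of $B$ for $\mu=3$. This gives the inclusion in (iii), and the same inclusion gives degeneracy in (iv) whenever $U$ is degenerate. If all $a_{ij}>0$, then $B$ is a positive matrix. Perron--Frobenius then makes $3$ a simple eigenvalue, and with $\det A\ne0$ there is no $\mu=1$ contribution, which gives the spanning statement in (iii). Part (iv) then follows by collecting these cases.

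\textbf{Structure of $L$ in (ii).} The vector $\vec dU$ is a solution iff $A(\vec d\odot\vec d)=\vec 1$, which holds iff $\vec d\odot\vec d-\vec c\odot\vec c\in K$. This is the stated formula for $L$. For $\vec\gamma\in K$, symmetry of $A$ gives
\[
\sum_i\gamma_i=\vec 1\cdot\vec\gamma=(A(\vec c\odot\vec c))\cdot\vec\gamma=(\vec c\odot\vec c)\cdot A\vec\gamma=0 ,
\]
so $|\vec d|^2=|\vec c|^2$ and $L$ lies in the sphere $S^+_{c_1^2+\cdots+c_n^2}$. For the energy,
\[
I(\vec dU)=\tfrac12|\vec d|^2\int_\Omega U^4-\tfrac14(\vec d\odot\vec d)\cdot A(\vec d\odot\vec d)\int_\Omega U^4=\tfrac14|\vec d|^2\int_\Omega U^4 ,
\]
which is constant on $L$. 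The parameter set $\{\vec\gamma\in K: c_i^2+\gamma_i>0\}$ is a convex relatively open subset of $K$. The coordinatewise square-root map is a diffeomorphism from it onto $L$, so $L$ is a connected smooth $\dim K$-dimensional manifold. Its boundary points have some $c_i^2+\gamma_i=0$, hence lie in $\partial S^+$.

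\textbf{Main obstacle.} The delicate step is the spectral bookkeeping in the kernel argument: justifying $\omega_2\ge3$ from Morse index one, and ruling out the case $\mu_i\in(1,3)$ coinciding with a weighted eigenvalue. I would handle both through the variational characterization of $\omega_2$, as already used in Lemma \ref{MI}.
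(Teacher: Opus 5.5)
Your proposal is correct and follows essentially the paper's proof. It uses the same reduction to $\R^nU$ via Lemma \ref{MI}, the same diagonalization of $B$ for the kernel (eigenvalue $1$ of $B=I+2DAD$ corresponds to $\ker A$, eigenvalue $3$ to the linearized kernel of $U$, and $\omega_2\ge 3$ excludes everything else), and the same description of $L$ through $\vec d\odot\vec d-\vec c\odot\vec c\in K$. Your use of Sylvester's law of inertia for $B-I=2DAD$ and your explicit formula $I(\vec dU)=\frac14|\vec d|^2\int_\Omega U^4$ only streamline the paper's eigen-decomposition of $A$ on $\R^nU$ and its appeal to criticality.
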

\begin{proof}
The second derivative of the energy functional $I$ at $\vec{u}=(c_1,c_2,\cdots,c_n)U$ in the direction of $\vec{\phi}=(\phi_1,\phi_2,\cdots,\phi_n)\in \SH$ is given by
\begin{align} \label{twoderi}
I''(\vec{u})(\vec{\phi},\vec{\phi})&=\lVert\vec{\phi}\rVert_{\Omega}^2-\int_{\Omega}U^2\sum_{i,j=1}^{n}a_{ij}(c_j^2\phi_i^2+2c_ic_j\phi_i\phi_j)dx\\
&=\lVert\vec{\phi}\rVert_{\Omega}^2-\int_{\Omega}U^2\left(\sum_{i=1}^n \phi_i^2+\sum_{i,j=1}^n c_i\phi_i\cdot 2a_{ij}\cdot c_j\phi_j\right)dx.
\end{align}
In particular, for any $\vec{\phi}\in \R^nU$, this expression is simplified to
\begin{equation}
\label{quadphi}
I''(\vec{u})(\vec{\phi},\vec{\phi})=-\int_{\Omega}U^2\sum_{i,j=1}^n c_i\phi_i\cdot 2a_{ij}\cdot c_j\phi_jdx.
\end{equation}
We now decompose $(c_1\phi_1,\cdots,c_n\phi_n)$ similar as in \cite{BKS}. The difference is that we decompose with respect to the eigenvectors of $A$, instead of those of $B$ given in \eqref{matrixB}. Let $\{\vec{\gamma}^1,\vec{\gamma}^2,\cdots,\vec{\gamma}^n\}$ be an orthonormal system of eigenvectors corresponding to eigenvalues $\lambda_1$, $\lambda_2$, $...$, $\lambda_n$ of $A$, respectively. Denote the $j$-th component of $\vec{\gamma}^i$ as $\gamma_j^i$. We define
\begin{equation}
\psi_i=\sum_{j=1}^{n}\gamma_j^i\cdot c_j\phi_j,\quad 1\leq i\leq n.
\end{equation}
Then, it holds that
\begin{equation}
I''(\vec{u})(\vec{\phi},\vec{\phi})=-\int_{\Omega}U^2\sum_i 2\lambda_i \psi_i^2dx.
\end{equation}
Note that $\vec{\phi}\in \R^nU$ if and only if $\psi_i\in\R U$ for $i=1,2,\cdots, n$. Then, we see from \eqref{quadphi} that a maximal space $W\subset \R^nU$ satisfying
\begin{equation} \label{negativespace}
 I''(\vec{u})(\vec{\phi},\vec{\phi})<0\ \mathrm{for}\ \vec{\phi}\in W\setminus\{0\}
\end{equation}
is characterized by the linear span of
\begin{equation} \label{maximalspace}
 \Big \{(\vec{\gamma}\oslash \vec{c})U \ | \ A\vec{\gamma} = \lambda \vec{\gamma},  \ \lambda  > 0\Big \}.\end{equation}
Thus the dimension of $W$ equals the number of positive eigenvalues of $A$. By Lemma \ref{MI}, the Morse index of $\vec{u}$ equals the number of positive eigenvalues of $A$, up to their multiplicity. This proves $(i)$

We now move on to the linearized kernel of the solution $\vec{u}$. The linearized system of \eqref{eq3} at $\vec{u}$ is given by
\begin{equation}
\label{linsys}
\Delta\varphi_i-\lambda\varphi_i+U^2\left(\sum_{j=1}^{n}a_{ij}c_j^2\varphi_i+\sum_{j=1}^n 2a_{ij}c_ic_j\varphi_j\right)=0\ \mathrm{for}\ 1\leq i\leq n,\ \varphi_i\in H.
\end{equation}
We define 
\begin{equation}
b_{ij}=
\begin{cases}
    2a_{ii}c_i^2+\sum_{k=1}^n a_{ik}c_k^2\ \mathrm{if}\ i=j,\\
    2a_{ij}c_ic_j\ \mathrm{if}\ i\neq j.
\end{cases}
\end{equation}
and a matrix  $B=(b_{ij})_{1\leq i,j\leq n}$. 
Writing in terms of the matrix $B$, \eqref{linsys} is rewritten as
\begin{equation}
\Delta\varphi_i-\lambda\varphi_i+U^2\left(\sum_{j=1}^n b_{ij}\varphi_j\right)=0\ \mathrm{for}\ 1\leq i\leq n,\quad \varphi_i\in H.
\end{equation}
As in the proof of Lemma \ref{MI},
let $\{\vec{\eta}^1,\cdots,\vec{\eta}^n\}$ be an orthonormal system of eigenvectors corresponding to eigenvalues $\mu_1$, $\mu_2$, $...$, $\mu_n$ of $B$, respectively. Denote the $j$-th component of $\vec{\eta}^i$ as $\eta_j^i$, and define
\begin{equation}
\chi_i=\sum_{j=1}^{n}\eta_j^i\cdot \phi_j,\quad 1\leq i\leq n.
\end{equation}
Since 
$\vec{\phi}=\sum_{i=1}^{n}\chi_{i}\vec{\eta}^i,$
$\vec{\phi}$ is a solution of \eqref{linsys} if and only if
\begin{equation}
\label{sle}
\Delta\chi_i-\lambda\chi_i+\mu_i U^2\chi_i=0\ \mathrm{for}\ 1\leq i\leq n,\quad \chi_i\in H,
\end{equation}
where $\{\mu_i\}$ are eigenvalues of $B$.
By \eqref{3ev} and Lemma \ref{M0}, we see that $-3 < \mu_i \le 3.$ 
Since \eqref{sle} has the first eigenvalue $1$ and a corresponding positive eigenfunction $U$, we see that if $\mu_i < 1,$ $\chi_i = 0.$
Since $U$ is a least energy solution of \eqref{singleeq},
$\mu_i =1$ or $3$ if $\mu_i \in [1,3].$
Since $B-I=(2a_{ij}c_ic_j)_{1\leq i,j\leq n}$, $1$ is an eigenvalue of $B$ if and only if \[\det (A)=(2^nc_1^2c_2^2\cdots c_n^2)^{-1}\det(B-I)=0.\]
Since $3$ is an eigenvalue of $B$, and $\mu_i=3$ is an eigenvalue of \eqref{sle} if and only if $U$ degenerates in $H$. 
Thus, if $ \det (A) = 0$ and  $U$ does not degenerate in $H$, 
since $\vec{\phi}=\sum_{i=1}^{n}\chi_{i}\vec{\eta}^i$ and the eigenvectors of $B$ corresponding to the eigenvalue $1$ relates to the kernel of $A$, 
the linearized kernel at $\vec{u}$ is given by
\[\Big\{ (\vec{\gamma}\oslash \vec{c}) U \ \Big | \ A \vec{\gamma} = 0\Big \}.\]
To characterize the set $L,$ we note that
$d_1,\cdots,d_n>0$. Then, $(d_1,\cdots,d_n)U$ is a positive vector solution of \eqref{eq3} if and only if
$$
A\begin{pmatrix}
d_1^2\\ \vdots \\ d_n^2
\end{pmatrix}=\begin{pmatrix}
1\\ \vdots\\ 1
\end{pmatrix}=A\begin{pmatrix}
c_1^2\\ \vdots\\ c_n^2
\end{pmatrix}.
$$
This condition is equivalent to
$$
\begin{pmatrix}
d_1^2-c_1^2\\ \vdots \\ d_n^2-c_n^2
\end{pmatrix}=\begin{pmatrix}
k_1\\ \vdots\\ k_n
\end{pmatrix}
$$
for some $(k_1,\cdots,k_n)\in K$.
In this case, for $t \in [0,1]$, $(\sqrt{c_1^2+tk_1},\cdots,\sqrt{c_n^2+tk_n})U$ is a positive vector solution \eqref{eq3}. Thus $L$ is connected and we get the characterization of $L$.
Furthermore, since $A$ is symmetric, $K$ is perpendicular to the image of $A$. In particular, $K$ is perpendicular to $(1,\cdots,1)$. This implies
$$
\sum_{i=1}^{n}(c_i^2+k_i)=\sum_{i=1}^{n}c_i^2
$$
for any $(k_1,\cdots,k_n)\in K$ so that $L\subset S^+_{c_1^2+\cdots+c_n^2}$.
The energy property $I((d_1,\cdots,d_n)U) = I(\vec{u})$ follows from the fact that $I^\prime((d_1,\cdots,d_n)U) = 0$ for any $(d_1,\cdots,d_n) \in L$. 
Lastly, for $(c_1,\cdots,c_n) \in L$ and $(k_1,\cdots,k_n) \in K,$
$(c_1+tk_1,\cdots,c_n+tk_n) \in L$ for small $|t| \ge 0;$ thus $\partial L \subset \partial S^+_{c_1^2+\cdots+c_n^2}$.
This completes the proof $(ii)$.

If $ \det (A) \ne 0$ and  $U$  is degenerate in $H$, 
since $(c_1,\cdots,c_n)$ is the eigenvector of $B$ corresponding to the eigenvalue $3,$
the linearized kernel at $\vec{u}$ contains
 \[\{ (c_1,\cdots,c_n) \psi \ | \   \Delta \psi -\lambda \psi + 3U^2 \psi = 0, \psi \in H\}.\]
If we further assume that $a_{ij} > 0$ for any $i,j \in \{1,\cdots,n\},$
we see from the Perron-Frobenius theorem that the maximum eigenvalue $3$ of $B$ is simple; in this case,
the linearized kernel at $\vec{u}$ is given by
 \[\{ (c_1,\cdots,c_n) \psi \ | \   \Delta \psi - \lambda \psi + 3U^2 \psi = 0, \psi \in H\}.\]
This proves $(iii).$

The last assertion $(iv)$ follows from the results and arguments in $(i)$,$(ii)$,$(iii).$
\end{proof}

By Theorem \ref{T1}, it is immediate that the Morse index of a positive vector solution $\vec{u}\in\R^nU$ is at most $n$. Furthermore, as we see in the following Theorem, for any $1\leq m\leq n$, we can find a matrix $A$ with symmetric, nonnegative entries such that $\det (A)\neq 0$ and there exists a positive vector solution $\vec{u}\in\R^nU$ to a system \eqref{eq3} of which the Morse index equals $m$.

\begin{thm}
\label{T2}
Let $1\leq m\leq n$. Then, there exists a matrix $A=(a_{ij})_{1\leq i,j\leq n}$ satisfying $a_{ij}=a_{ji}\geq 0$ for $1\leq i,j\leq n$ such that $\det (A)\neq 0$ and there exists a positive vector solution $\vec{u}\in\R^nU$ to a system \eqref{eq3} which the Morse index of $\vec{u}$ equals $m$. 
\end{thm}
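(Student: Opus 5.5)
By Theorem \ref{T1}(i), it suffices to construct, for each $1\le m\le n$, a symmetric matrix $A$ with nonnegative entries that has exactly $m$ positive eigenvalues and $n-m$ negative eigenvalues (so that $\det(A)\ne 0$), and for which \eqref{Ac=1} has a solution with $c_1,\dots,c_n>0$. Then $\vec{u}=\vec{c}U$ is a positive vector solution whose Morse index equals $m$. The natural approach is a block-diagonal construction. In each block we will make the vector $(1,\dots,1)$ the image of a vector with positive entries.

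Partition $\{1,\dots,n\}$ into $m$ nonempty blocks of sizes $n_1,\dots,n_m$ with $\sum_k n_k=n$. On a block of size $p\ge 1$, take $J_p-\varepsilon I_p$, where $J_p$ is the all-ones $p\times p$ matrix and $0<\varepsilon<1$. Its entries are $1-\varepsilon>0$ on the diagonal and $1$ off the diagonal, so they are nonnegative. The eigenvalues are $p-\varepsilon>0$, which is simple with eigenvector $(1,\dots,1)$, and $-\varepsilon<0$ with multiplicity $p-1$. Thus each block contributes exactly one positive eigenvalue and has no zero eigenvalue. Moreover
\[
(J_p-\varepsilon I_p)\,\frac{1}{p-\varepsilon}(1,\dots,1)^T=(1,\dots,1)^T ,
\]
so on this block we can take $c_i^2=1/(p-\varepsilon)>0$. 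For $p=1$ the block is just the positive scalar $1-\varepsilon$, which is consistent with this formula.

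Let $A$ be the block-diagonal matrix built from these $m$ blocks. Its off-block entries are $0$, so $A$ is symmetric with $a_{ij}\ge 0$. The spectrum of $A$ is the union of the block spectra. Hence $A$ has exactly $m$ positive eigenvalues, and all remaining eigenvalues equal $-\varepsilon$, so $\det(A)\ne0$. Stacking the block solutions gives $c_i>0$ satisfying \eqref{Ac=1}. Therefore $\vec{c}U$ is a positive vector solution of \eqref{eq3}, and Theorem \ref{T1}(i) gives that its Morse index is $m$. For $m=n$, all blocks have size $1$ and $A=(1-\varepsilon)I$; for $m=1$, $A=J_n-\varepsilon I_n$.

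The main point to check is the nonnegativity of the entries together with exact control of the signs of the eigenvalues. The perturbation $-\varepsilon I$ with $0<\varepsilon<1$ handles both at once: it removes the zero eigenvalues of $J_p$ while keeping the diagonal entries positive. There is no real obstacle beyond this verification. If someone objects to the zero entries of $A$, we can perturb to $A+\delta J_n$ with small $\delta>0$. For small $\delta$ this preserves the inertia by continuity of eigenvalues, and it preserves the positivity of the solution of \eqref{Ac=1} by continuity of $A^{-1}(1,\dots,1)^T$.
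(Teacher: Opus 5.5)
Your proposal is correct and is essentially the paper's argument. The paper also applies Theorem \ref{T1}(i) to a block-diagonal matrix, namely $A=\mathrm{diag}(I_{m-1},\,J_{n-m+1}-I_{n-m+1})$ with $c_1=\cdots=c_{m-1}=1$ and $c_m=\cdots=c_n=(n-m)^{-1/2}$, and it treats $m=n$ separately via $A=I$. Your blocks $J_p-\varepsilon I_p$ with $0<\varepsilon<1$ are a uniform version of the same construction; they avoid that separate case, since the paper's choice $\varepsilon=1$ would make a block of size one vanish.
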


\begin{proof}
First, we consider the case where $m=n$. In this case, let $A$ be the $n\times n$ identity matrix. Then, $\det (A)\neq 0$ and $(1,1,\cdots,1)U\in\R^nU$ is a positive vector solution to a system \eqref{eq3}. Furthermore, the Morse index of $\vec{u}$ equals $n$ by Theorem \ref{T1}.

From now on, let $1\leq m<n$. Consider an matrix
\begin{equation}
A=
\begin{pmatrix}
    I_{m-1} & 0_{(m-1)\times (n-m+1)}\\
    0_{(n-m+1)\times (m-1)} & 1_{(n-m+1)\times (n-m+1)}-I_{n-m+1}
\end{pmatrix}
\end{equation}
where $I_p$ is the $p\times p$ identity matrix, $1_{p\times q}$ is the $p\times q$ matrix where all the terms are 1, and $0_{p\times q}$ is the $p\times q$ matrix where all the terms are 0. 
Then, 
\begin{equation}
A
\begin{pmatrix}
    1_{(m-1)\times 1}\\
    \frac{1}{n-m}\cdot 1_{(n-m+1)\times 1}
\end{pmatrix}
=
\begin{pmatrix}
    1\\
    1\\
    \vdots\\
    1
\end{pmatrix}
\end{equation}
holds, which is simply \eqref{Ac=1} with $c_1=c_2=\cdots=c_{m-1}=1>0$ and $c_m=c_{m+1}=\cdots=c_n=(n-m)^{-1/2}>0$. Therefore, $\vec{u}=(c_1,c_2,\cdots,c_n)U\in\R^nU$ is a positive vector solution to a system \eqref{eq3}.

Now, let us denote the $i$-th unit vector in $\R^n$ as $e_i$. Then, $e_1,e_2,\cdots,e_{m-1}$ are $(m-1)$ eigenvectors of $A$ with eigenvalues 1. Also, $e_{m}-e_{m+1},e_{m}-e_{m+2},\cdots,e_{m}-e_{n}$ are $(n-m)$ eigenvectors of $A$ with eigenvalues $-1$. Finally, $e_{m}+e_{m+1}+\cdots+e_{n}$ is an eigenvector of $A$ with eigenvalue $(n-m)$. There are no other eigenvectors and $\det (A)\neq 0$ since 0 is not an eigenvalue of $A$. Now we see that there are exactly $m$ positive eigenvalues of $A$. By Theorem \ref{T1}, the Morse index of $\vec{u}$ is $m$, which completes the proof.
\end{proof}

\begin{remark}
If $a_{ij} = 1$ for any $1 \le i,j\le n,$ we see that $L=S^+_{1}.$
If $n=4$ and 
\[ A\equiv (a_{ij}) = \begin{pmatrix}
0 & 1 & 1/2 &1/2 \\
1  & 0 & 1/2 &1/2 \\
1/2 &1/2 & 0 & 1 \\
1/2 & 1/2 & 1 & 0
\end{pmatrix}, \]
 $L= \{(\cos \theta, \cos \theta,\sin \theta,\sin \theta) \ | \ \theta \in (0,\pi/2) \}$ is an open subset of a circle.
If $n=4$ and 
\[ A \equiv (a_{ij}) = \begin{pmatrix}
0 & 0 & 1 &1 \\
0  & 0 & 1 & 1 \\
1 & 1 & 0 & 0 \\
1 & 1 & 0 & 0
\end{pmatrix}, \]
$L= \{(\cos \theta, \sin \theta,\cos \theta^\prime,\sin \theta^\prime) \ | \ \theta, \theta^\prime \in (0,\pi/2) \}$ is an open subset of a torus
\end{remark}

\subsection{Morse index of semi-vector nonnegative solutions}
    We now consider semi-vector nonnegative solutions. Let $m<n$. For positive real numbers $c_1$, $c_2$, ..., $c_m$, assume that $\vec{u}=(u_1,u_2,\cdots,u_n)=(c_1,c_2,\cdots,c_m,0,0,\cdots,0)U$ is a nonnegative solution of \eqref{eq3} with interaction matrix $A=(a_{ij})_{1\leq i,j\leq n}$. Let $A_m=(a_{ij})_{1\leq i,j\leq m}$, and $c_{m+1}=c_{m+2}=\cdots=c_n=0$. For $\vec{\phi}=(\phi_1,\phi_2,\cdots,\phi_n)\in \SH$, we have
\begin{align}
I''(\vec{u})(\vec{\phi},\vec{\phi})&=\lVert\vec{\phi}\rVert_{\Omega}^2-\int_{\Omega}U^2\sum_{i,j=1}^n a_{ij}(c_j^2\phi_i^2+2c_ic_j\phi_i\phi_j)dx\\
&=\lVert\vec{\phi}\rVert_{\Omega}^2-\int_{\Omega}U^2\sum_{i,j=1}^n \phi_i b_{ij}\phi_jdx,
\end{align}
where
\begin{equation}
\label{B}
b_{ij}=
\begin{cases}
    2a_{ii}c_i^2+\sum_{k=1}^n a_{ik}c_k^2\ \mathrm{if}\ i=j,\\
    2a_{ij}c_ic_j\ \mathrm{if}\ i\neq j.
\end{cases}
\end{equation}
Let us define $B=(b_{ij})_{1\leq i,j\leq n}$. Then, $b_{ij}=0$ for all $i\neq j$ satisfying either $i>m$ or $j>m$. As before,
since $\Delta (c_iU)-\lambda c_iU=-c_iU^3$, it holds that
\begin{equation}
\label{Bm1}
A_m
\begin{pmatrix}
c_1^2\\
c_2^2\\
\vdots\\
c_m^2
\end{pmatrix}
=
\begin{pmatrix}
1\\
1\\
\vdots\\
1
\end{pmatrix}.
\end{equation}
This implies $b_{ii}=2a_{ii}c_i^2+1$ for $i=1,2,\cdots,m$. Therefore, $B$ is of form
\begin{equation}
\label{Bform}
    B=
    \begin{pmatrix}
        B_m& O\\
        O^T& \tilde{B}
    \end{pmatrix}
\end{equation}
where $B_m$ is a $m\times m$ matrix, $\tilde{B}$ is a $(n-m)\times (n-m)$ diagonal matrix, and $O$ is the $m\times (n-m)$ zero matrix.
Let $\omega_1 = 1 < \omega_2 \le \cdots$ be the eigenvalues of multiplicity one with the corresponding eigenfunctions $\{\psi_i\}_{i=1}^\infty$ satisfying 
   \begin{equation}
    \label{speceq}
    \Delta\psi_i-\lambda \psi_i+\omega_i U^2\psi_i=0,\quad  \psi_i\in H.
    \end{equation}
 Then, we get the following result for the semi-vector nonnegative solution $\vec{u}$.
\begin{thm}
\label{T1-1}
    Let $c_1,c_2,\cdots,c_m>0$ and $\vec{u}=(c_1,c_2,\cdots,c_m,0,\cdots,0)U\in\R^nU$ be a semi-vector nonnegative solution to a system \eqref{eq3} with the interaction matrix $A=(a_{ij})_{1\leq i,j\leq n}$ satisfying $a_{ij}=a_{ji} \ge 0 $ for $1\leq i,j\leq n$. Define $B=(b_{ij})_{1\leq i,j\leq n}$ by \eqref{B}, $A_m=(a_{ij})_{1\leq i,j\leq m}$, $B_{m}=(b_{ij})_{1\leq i,j\leq m}$, and $f(x) = \max\{i \in \mathbb{N} \ | \ \omega_i < x\},$ where
 $   f(x) = 0$ if $\{i \in \mathbb{N} \ | \ \omega_i < x\} = \emptyset.$
Then, the Morse index of $\vec{u}$ equals
    \begin{equation}\label{eigenformula}
       the \  number \ of \ the \ positive \ eigenvalues \ of A_m +\sum_{i=m+1}^n f(b_{ii}).
    \end{equation}
If $U$ is nondegenerate in $H$,
the linearized kernel of a system \eqref{eq3} at $\vec{u}\in\R^nU$ is spanned
by $\{ (\frac{\gamma_1}{c_1},\cdots,\frac{\gamma_m}{c_m},0\cdots,0) U \ | \ A_m (\gamma_1,\cdots,\gamma_m) = 0\}$ and  
\[E_k \equiv \{ e_k\psi_{i(k)} \ | \  b_{kk} = \omega_{i(k)} \},   k =m+1,\cdots,n,\]
where $\{e_k\}_1^n$ is the standard basis of $\R^n$ and we define
$E_k$ to be empty if $b_{kk} \notin \{\omega_1,\omega_2\cdots \}$;
thus, $\vec{u}$ is nondegerate if $U$ is nondegenerate in $H$, $\det(A_m) \ne 0$ and $b_{kk} \notin \{\omega_1,\omega_2,\cdots\}$ for $k =m+1,\cdots,n.$

\end{thm}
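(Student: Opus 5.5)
The block structure \eqref{Bform} of $B$ decouples the quadratic form and the linearized system. Write $\vec{\phi}=(\vec{\phi}',\phi_{m+1},\cdots,\phi_n)$ with $\vec{\phi}'=(\phi_1,\cdots,\phi_m)$. Since $b_{ij}=0$ whenever $i\neq j$ and $\max\{i,j\}>m$,
\[
I''(\vec{u})(\vec{\phi},\vec{\phi})=I_m''(\vec{u}')(\vec{\phi}',\vec{\phi}')+\sum_{k=m+1}^n\int_\Omega \big(|\nabla\phi_k|^2+\lambda\phi_k^2-b_{kk}U^2\phi_k^2\big)\,dx ,
\]
where $I_m$ is the energy of the $m$-component system with matrix $A_m$ and $\vec{u}'=(c_1,\cdots,c_m)U$. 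This is an orthogonal splitting of $\SH$ into $H^m\times H\times\cdots\times H$. Hence the Morse index of $\vec{u}$ is the sum of the Morse indices of the pieces, and the linearized kernel is the direct sum of the kernels of the pieces.

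\textbf{The first $m$ components.} By \eqref{Bm1}, $\vec{u}'$ is a synchronized positive vector solution of the $m$-component system with matrix $A_m$, which is symmetric with nonnegative entries. Theorem \ref{T1}(i) then gives Morse index equal to the number of positive eigenvalues of $A_m$.

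For the kernel of this block, I repeat the proof of Theorem \ref{T1}(ii) for $A_m$. Diagonalize $B_m$; its eigenvalues lie in $(-3,3]$ by Lemma \ref{M0}, since $3$ has eigenvector $(c_1,\cdots,c_m)$ as in \eqref{3ev}. The scalar problems \eqref{sle} have nontrivial solutions only for $\mu=1$ or $\mu=3$, because $U$ has Morse index $1$. When $U$ is nondegenerate, $\mu=3$ contributes nothing. For $\mu=1$, note $B_m-I=(2a_{ij}c_ic_j)$, so $(B_m-I)\vec{\eta}=0$ iff $A_m(\vec{c}\odot\vec{\eta})=0$. The eigenvalue $\mu=1$ thus contributes exactly $\{(\vec{\gamma}\oslash\vec{c})U \mid A_m\vec{\gamma}=0\}$, padded with zeros.

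\textbf{The components $k>m$.} Each is a scalar form $Q_k(\phi)=\int|\nabla\phi|^2+\lambda\phi^2-b_{kk}U^2\phi^2$. Since $U^2>0$ and $\Omega$ is such that the weighted eigenvalue problem \eqref{speceq} is well posed (compactness of $\phi\mapsto U^2\phi$ in $H$, also in the radial $\R^N$ setting because $U$ decays), the eigenfunctions $\{\psi_i\}$ form a complete orthogonal basis of $H$ with respect to $\|\cdot\|_\Omega$. They also diagonalize $Q_k$: for $\phi=\sum t_i\psi_i$,
\[
Q_k(\phi)=\sum_i t_i^2\Big(1-\frac{b_{kk}}{\omega_i}\Big)\|\psi_i\|_\Omega^2 .
\]
The maximal negative subspace is therefore the span of the $\psi_i$ with $\omega_i<b_{kk}$, whose dimension is $f(b_{kk})$, counting multiplicity by listing repeated $\omega_i$. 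The kernel of the linearization $\Delta\phi_k-\lambda\phi_k+b_{kk}U^2\phi_k=0$ is nontrivial exactly when $b_{kk}=\omega_{i}$ for some $i$, and is then spanned by the corresponding $\psi_i$, giving $E_k$.

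Summing the contributions gives \eqref{eigenformula} and the kernel description. Nondegeneracy follows immediately when $\det(A_m)\neq0$, $U$ is nondegenerate, and no $b_{kk}$ lies in $\{\omega_i\}$.

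\textbf{Main obstacle.} The delicate step is justifying the spectral decomposition of the weighted problem: discreteness of $\{\omega_i\}$ and completeness of $\{\psi_i\}$, especially on unbounded $\Omega$. I would handle it by noting that $\phi\mapsto(-\Delta+\lambda)^{-1}(U^2\phi)$ is a compact self-adjoint operator on $(H,\langle\cdot,\cdot\rangle_\Omega)$, so the spectral theorem applies. I would also read the phrase ``multiplicity one'' as listing eigenvalues with multiplicity, since the count $f(b_{kk})$ must include multiplicity.
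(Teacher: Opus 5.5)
Your proposal is correct and follows essentially the same route as the paper. The block form \eqref{Bform} of $B$ splits the Hessian and the linearized system into two parts. The first is the $m$-component synchronized problem for $A_m$, handled by Theorem \ref{T1}, with the kernel coming from the eigenvalue $1$ of $B_m$ via $(B_m-I)\vec{\eta}=0 \iff A_m(\vec{c}\odot\vec{\eta})=0$. The second consists of the scalar weighted problems with weight $b_{kk}U^2$ for $k>m$, which contribute $f(b_{kk})$ negative directions and the kernels $E_k$. Your additional remarks fill in details the paper leaves implicit: completeness of $\{\psi_i\}$ via compactness of $\phi\mapsto(-\Delta+\lambda)^{-1}(U^2\phi)$, and reading the $\omega_i$ as listed with multiplicity.
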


\begin{proof}
Let $\{\vec{\eta}^1,\vec{\eta}^2,\cdots,\vec{\eta}^m\}$ be an orthonormal system of eigenvectors corresponding to eigenv-alues $\mu_1$, $...$, $\mu_m$ of $B_m$. 
By \eqref{Bform},  $e_k$ is the eigenvector of $B$ with an eigenvalue $b_{kk}$ for $k=m+1,\cdot,n.$  We define $(\mu_k,\vec{\eta}^k) = (b_{kk},e_k)$ for $k=m+1,\cdots,n.$

Denote the $j$-th component of $\vec{\eta}^i$ as $\eta_j^i$, and define
\begin{equation*}
\chi_i=\sum_{j=1}^{n}\eta_j^i\cdot \phi_j,\quad 1\leq i\leq n.
\end{equation*}
Then, $\vec{\phi}$ is expressed as
\begin{equation*}
\vec{\phi}=\sum_{i=1}^{n}\chi_{i}\vec{\eta}^i,
\end{equation*}
so that
\begin{equation*}
\begin{split}
I''(\vec{u})(\vec{\phi},\vec{\phi})&=\sum_{i}\lVert\chi_i\rVert_{\Omega}^2-\int_{\Omega}U^2\cdot\sum_{i}\mu_i\chi_i^2dx\\
&=\sum_{i}\int_{\Omega}\lvert\nabla\chi_i\rvert^2+\lambda \chi_i^2-\mu_i U^2\chi_i^2dx
\end{split}
\end{equation*}
holds. Therefore, the Morse index of $\vec{u}$ is the sum of the maximal dimension of a subspace $W_i\subset H$ satisfying
\begin{equation*}
\int_{\Omega}\lvert\nabla\chi\rvert^2+\lambda\chi^2-\mu_i U^2\chi^2dx<0\quad\mathrm{for}\quad\chi\in W_i\setminus\{0\},
\end{equation*}
for $i=1,2,\cdots,n$, which equals $\sum_{i=1}^nf(\mu_i).$
By the same argument as the proof of Theorem \ref{T1}, we see that
$\sum_{i=1}^mf(\mu_i)$ is the number  of  the  positive  eigenvalues of $A_m.$
This proves \eqref{eigenformula}.

Suppose that $U$ is nondegenerate.
An element $(\varphi_1,\cdots,\varphi_n) \in \SH$ in the linearized kernel of a system \eqref{linsys} at $\vec{u} \in  \R^n U$ satisfies
\begin{equation}
\label{linsys2}
\Delta\varphi_i-\lambda\varphi_i+U^2\left(\sum_{j=1}^n b_{ij}\varphi_j\right)=0\ \mathrm{for}\ 1\leq i\leq n,\quad \varphi_i\in H.
\end{equation}
Defining
\begin{equation}
\chi_i=\sum_{j=1}^{n}\eta_j^i\cdot \phi_j,\quad 1\leq i\leq n,
\end{equation}
we see that 
$\vec{\phi}=\sum_{i=1}^{n}\chi_{i}\vec{\eta}^i$
is a solution of \eqref{linsys2} if and only if
\begin{equation*}
\Delta\chi_i-\lambda\chi_i+\mu_i U^2\chi_i
= 0\ \mathrm{for}\ 1\leq i\leq n,\quad \chi_i\in H,
\end{equation*}
where $\mu_1,\cdots,\mu_m$ are eigenvalues of $B_m$
and $\mu_i = \sum_{j=1}^m a_{ij}c_j^2$ for $i=m+1,\cdots,n.$
For $i=1,\cdots,m,$ as in the proof of Theorem \ref{T1}, we see that
\eqref{sle} has a solution $U$ only when $\mu_i = 1.$
Since $B_m-I=(2a_{ij}c_ic_j)_{1\leq i,j\leq m}$, $1$ is an eigenvalue of $B_m$ if and only if \[\det (A)_m=(2^nc_1^2c_2^2\cdots c_m^2)^{-1}\det(B_m-I)=0.\]
Thus, the space 
$\{ (\gamma_1,\cdots,\gamma_m,0,\cdots,0) U \ | \ A_m (c_1\gamma_1,\cdots,c_m\gamma_m) = 0\}$
 is in the linearized kernel of a system \eqref{eq3} at $\vec{u}$.
For $k \in \{m+1,\cdots,n\},$ a space $E_k \equiv \{ e_k\psi_{i} \}$ is 
in the kernel only when
$b_{kk} = \omega_{i}.$
This proves the characterization of the kernel of a system \eqref{eq3} at $\vec{u}$
The linearized nondegeneracy of $\vec{u}$ follows from the characterization when
$U$ is nondegenerate in $H$, $\det(A_m) \ne 0$ and $b_{kk} \notin \{\omega_1\cdots\}$ for $i =m+1,\cdots,n.$
\end{proof}
  \begin{remark}
   We can generate pairs of the interaction matrix $A$ and nonnegative vector solution $\vec{u}$ with large Morse indices. For example, let
    \begin{equation}
        A=\begin{pmatrix}
            1& 0& a\\
            0& 1& 0\\
            a& 0& 0
        \end{pmatrix}
    \end{equation}
    and $\vec{u}=(1,1,0)U$. Then, if we define $B=(b_{ij})_{1\leq i,j\leq n}$ by \eqref{B}, in this case, it holds that $b_{33}=a$. Therefore, by Theorem \ref{T1-1},  the Morse index of $\vec{u}$ also goes to $+\infty$ as $a$ goes to $+\infty$.
If $a=1,$ the linearized kernel of a system \eqref{eq3} at $(1,1,0)U$ is spanned 
by $(0,0,1)U$. Thus, there is a simple bifurcation from $(1,1,0)U$ for $a$ close  to $1$ in the direction of $(0,0,1)U.$
In fact, we have a global solution curve $\{(\frac{1}{\sqrt{a}},1,\frac{\sqrt{a-1}}{a})U \ | \ a \in [1,\infty)\}$ bifurcating from $(1,1,0)U$ in the direction of $(0,0,1)U.$
We could characterize such a phenomenon in a simple case as in Theorem \ref{B2}. It would be interesting to see a bifurcation phenomena in the general case when we have a linearized degeneracy of a  semi-vector nonnegative  solution of \eqref{eq3}.
\end{remark}

\subsection{Bifurcation from the positive vector solutions}

We see from Theorem \ref{T1} that if $U$ is nondegenerate and $\det(A) \ne 0$, any positive vector solution $\vec{u} \in \R^n U$ continues to exist under perturbation of $A$ and no other solutions close to $\vec{u}$ exist.
 When $\det(A) = 0$ and there exists a positive vector solution $\vec{u} \in \R^nU$ of \eqref{eq3}, $(ii)$ of Theorem \ref{T1} says that there is a continuum $L$ of positive vector solutions containing $\vec{u}$ in $\R^nU$.  
To see whether there could be a bifurcation of positive vector solutions
under a perturbation of $A,$ 
we consider an example $a_{ij} =1$ for $1\le i,j \le 2.$
Then, \[L = \{(\cos\theta,\sin\theta)U \ | \ \theta \in (0,\pi/2)\}, K = \{(a,-a) \ | \ a \in \R\}.\] 
For a perturbation $A^\e =A+\e E$ of 
$A = \begin{pmatrix}
1 & 1  \\
1 & 1 
\end{pmatrix}$  by $E  \equiv  \begin{pmatrix}
\alpha & 0  \\
0 & \beta 
\end{pmatrix},$
it is well known that if $\alpha \beta \le 0$ and $\alpha^2+\beta^2 \ne 0,$
there exist no corresponding positive vector solutions of \eqref{eq3} with $A^\e$ replacing $A$.
On the other hand, if $\alpha\beta > 0,$
we get a corresponding positive vector solution 
\[\vec{c}_\e U \equiv \Big (\sqrt {\frac{\beta}{\alpha+\beta+\e\alpha\beta}},
\sqrt{\frac{\alpha}{\alpha+\beta+\e\alpha\beta}} \Big )U \ \ \ \textup{ for small } |\e|  \ge  0.\]
Thus, we see that  for a perturbation $A^\e$ of $A$, we have a bifurcation from a positive vector solution $\Big (\sqrt {\frac{\beta}{\alpha+\beta}},
\sqrt{\frac{\alpha}{\alpha+\beta}} \Big )U$  for small $|\e| $ if and only if $\alpha\beta > 0.$

We study this phenomenon in general cases.
For a perturbation of the interaction matrix $A$ by any symmetric matrix $ E =\{e_{ij}\}_{1\le ij, \le n},$
the following is a key function 
\[G(d_1^2,\cdots,d_n^2) = \sum_{1 \le i,j\le n} e_{ij}d_i^2d_j^2, \ \ (d_1,\cdots,d_n) \in L.\]
We assume that the condition in $(ii)$ of Theorem \ref{T1} holds and
 \[\vec dU=(d_1,\cdots,d_n)U \in \R^nU\]
is a positive vector solution of \eqref{eq3}.
Since
\[
L = \big \{ \big( (d_1^2+\gamma_1)^{1/2}, \cdots, (d_n^2+\gamma_n)^{1/2}\big)\ |\  \vec{\gamma}\in K \text{ with } \ d_i^2+k_i >0 , i=1,\cdots,n \big \},
\]
we may regard the function $G$ is defined on a neighborhood of $0$ in $K.$
Then, for $\vec{\gamma} = (\gamma_1,\cdots,\gamma_n),\vec{\eta}=(\eta_1,\cdots,\eta_n) \in K$, the gradient and Hessian of $G$ on $K$ are expressed by
\[ \nabla_K G(d_1^2,\cdots,d_n^2)(\vec{\gamma}) = 2\sum_{1 \le i,j\le n} e_{ij}\gamma_i d_j^2\]
and
\[ \nabla^2_K G(d_1^2,\cdots,d_n^2)(\vec{\gamma},\vec{\eta})=2\sum_{1\le i,j\le n} e_{ij}\gamma_i\eta_j. \]
Therefore, if there exists $\vec{d} = (d_1,\cdots,d_n) \in L$ such that for any $\vec{\gamma}\in K\setminus\{0\}$, $E\vec{\gamma} \perp (d_1^2,\cdots,d_n^2)$ but there exists $\vec{\eta}\in K$ such that $E\vec{\gamma}\cdot\vec{\eta}\neq 0$, then we see that \[ \nabla_K G(d_1^2,\cdots,d_n^2) = 0 \ \text{  and } \ \nabla^2_K G(d_1^2,\cdots,d_n^2)  \ \text{ is nondegenerate. } \]
 In the sense of a bifurcation of positive vector solutions for a perturbation of $A$ by  $A+\e E,$ we need the following  definition.
\begin{definition}
For a symmetric matrix $A$ satisfying the assumptions in $(ii)$ of Theorem \ref{T1} with a corresponding $k$-dimensional kernel $K$ of $A$ and a connected smooth $k$-dimensional manifold 
$L \subset S^+_r$ with $r =\sqrt{c_1^2+\cdots+c_n^2}$,
we say that a symmetric matrix 
$E =\{e_{ij}\}_{1\le i, j \le n}$ is complementary to $A$ at $\vec{d} \in L$ if  for any $\vec{\gamma}\in K\setminus\{0\}$, $E\vec{\gamma} \perp (d_1^2,\cdots,d_n^2)$ but there exists $\vec{\eta}\in K$ such that $E\vec{\gamma}\cdot\vec{\eta}\neq 0$. The point $\vec{d} = (d_1,\cdots,d_n) \in L$ is called a complementary point to $A$ and $E$.
\end{definition}

\begin{thm} \label{B2}
Suppose that $a_{ij} = a_{ji} \ge 0$ for $1\le i, j \le n$, a solution $U$ of \eqref{singleeq} is nondegenerate in $H$,  the kernel  $K$ of $A$ is $k$-dimensional with $k \in \{1,\cdots,n-1\}$ and there  exists a positive vector solution $(c_1,\cdots,c_n)U \in \R^n U$ of \eqref{eq3}.
Let $L \subset S^+_r$ with $r =\sqrt{c_1^2+\cdots+c_n^2}$ be the connected smooth $k$-dimensional manifold given in Theorem \ref{T1}-(ii).
Let  $E = (e_{ij})_{1 \le i,j\le n}$ be a symmetric matrix complementary to $A$ at $\vec{c}$ and define
$A^E_\e=A+\e E$.
Then, for any small $\e > 0,$ there exists a unique positive vector solution $\vec{u}_\e$ of \eqref{eq3} with $A$ replaced by $A^E_\e$ near $\vec{c}$U such that $\vec{u}_\e \in\R^nU$  and it converges to the positive vector solution $\vec{c}U$ as $\e\rightarrow0$.
\end{thm}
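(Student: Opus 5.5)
We plan to reduce the problem to a finite-dimensional one and then use a Lyapunov--Schmidt splitting together with the implicit function theorem. Since $U$ solves \eqref{singleeq}, a vector $\vec{d}U$ with $d_i>0$ solves \eqref{eq3} with $A$ replaced by $A^E_\e$ if and only if $\vec{s}\equiv(d_1^2,\cdots,d_n^2)$ satisfies
\[ (A+\e E)\vec{s}=(1,\cdots,1)^T. \]
This is the analogue of \eqref{Ac=1}. So it suffices to find a unique solution $\vec{s}_\e$ of this algebraic system near $\vec{c}^{\,2}\equiv(c_1^2,\cdots,c_n^2)$, with $\vec{s}_\e\to\vec{c}^{\,2}$. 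Positivity of the entries of $\vec{s}_\e$ is then automatic for small $\e$, and we set $\vec{u}_\e=(\sqrt{s_{\e,1}},\cdots,\sqrt{s_{\e,n}})U\in\R^nU$. Uniqueness is understood among solutions of this form near $\vec{c}U$.

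Let $P$ be the orthogonal projection onto $K^\perp=\mathrm{Im}(A)$ and $Q=I-P$ the projection onto $K$; both splittings use that $A$ is symmetric. We write $\vec{s}=\vec{c}^{\,2}+\vec{\gamma}+\vec{w}$ with $\vec{\gamma}\in K$ and $\vec{w}\in K^\perp$. Because $A\vec{c}^{\,2}=(1,\cdots,1)^T$ and $A\vec{\gamma}=0$, the equation becomes
\[ A\vec{w}+\e E(\vec{c}^{\,2}+\vec{\gamma}+\vec{w})=0 . \]
We then split it into its $P$- and $Q$-components.

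\textbf{Range component.} The $P$-component reads $A\vec{w}+\e PE(\vec{c}^{\,2}+\vec{\gamma}+\vec{w})=0$. Since $A|_{K^\perp}$ is invertible, the implicit function theorem gives a unique smooth solution $\vec{w}=\vec{w}(\vec{\gamma},\e)$ for small $(\vec{\gamma},\e)$. It satisfies $\vec{w}=-\e (A|_{K^\perp})^{-1}PE(\vec{c}^{\,2}+\vec{\gamma})+O(\e^2)$, and in particular $\vec{w}=O(\e)$ uniformly.

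\textbf{Kernel component.} The $Q$-component reads $\e QE(\vec{c}^{\,2}+\vec{\gamma}+\vec{w}(\vec{\gamma},\e))=0$. For $\e\neq0$ we divide by $\e$ and study the smooth map
\[ F(\vec{\gamma},\e)\equiv QE\big(\vec{c}^{\,2}+\vec{\gamma}+\vec{w}(\vec{\gamma},\e)\big): K\times\R\to K. \]
Here the complementarity hypothesis enters.
\begin{itemize}
\item For every $\vec{\gamma}\in K$ we have $\vec{\gamma}\cdot E\vec{c}^{\,2}=E\vec{\gamma}\cdot\vec{c}^{\,2}=0$, so $QE\vec{c}^{\,2}=0$. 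Since $\vec{w}(0,0)=0$, this gives $F(0,0)=0$.
\item We have $D_{\vec{\gamma}}F(0,0)=QE|_K$, because $D_{\vec{\gamma}}\vec{w}(0,0)=0$.
\item The second part of the complementarity condition says that for each $\vec{\gamma}\in K\setminus\{0\}$ there is $\vec{\eta}\in K$ with $E\vec{\gamma}\cdot\vec{\eta}\ne0$. Hence $QE\vec{\gamma}\neq0$, so the linear map $QE|_K:K\to K$ is injective and therefore invertible.
\end{itemize}
The implicit function theorem then yields a unique smooth curve $\vec{\gamma}(\e)$ with $\vec{\gamma}(0)=0$ and $F(\vec{\gamma}(\e),\e)=0$. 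Setting $\vec{s}_\e=\vec{c}^{\,2}+\vec{\gamma}(\e)+\vec{w}(\vec{\gamma}(\e),\e)$ gives the solution, with $\vec{s}_\e\to\vec{c}^{\,2}$. Uniqueness near $\vec{c}^{\,2}$ follows from the uniqueness in both implicit function theorem steps, since for $\e\neq0$ the original equation is equivalent to $\vec{w}=\vec{w}(\vec{\gamma},\e)$ together with $F(\vec{\gamma},\e)=0$.

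\textbf{Main obstacle.} The delicate step is the kernel direction. At $\e=0$ the whole manifold $L$ consists of solutions, so the unscaled equation is degenerate along $K$. One must divide by $\e$ and check that the reduced map $F$ is smooth, with the leading term $QE(\vec{c}^{\,2}+\vec{\gamma})$ and the correction $QE\vec{w}=O(\e)$. One must also verify that the two halves of the complementarity definition are exactly the statements that $F(0,0)=0$ (the $\nabla_K G=0$ condition) and that $D_{\vec{\gamma}}F(0,0)$ is invertible (nondegeneracy of $\nabla^2_K G$). The nondegeneracy of $U$ is not used in this finite-dimensional reduction. It is used only through Theorem \ref{T1}-(ii), which identifies the structure of $L$ and the role of $K$.
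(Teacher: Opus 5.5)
The gap is the scope of uniqueness. Your finite-dimensional reduction itself is correct, and for the bifurcation equation it is considerably cleaner than the paper's route. The paper runs a Lyapunov--Schmidt reduction in $\SH$ along the whole manifold $\{\vec dU:\vec d\in L\}$, builds a reduced energy, and applies the implicit function theorem to $\nabla_K G$. The nondegenerate Hessian of $G$ used there is exactly your invertibility of $QE|_K$. Your block structure even gives $\det(A+\e E)=\e^k\big(\det(A|_{K^\perp})\det(QE|_K)+O(\e)\big)\neq 0$ for small $\e\neq 0$. So $\vec s_\e$ yields the only positive vector solution in $\R^nU$, not just the only nearby one.

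You restrict uniqueness to solutions in $\R^nU$. The theorem, as the paper proves it, asserts more: $\vec u_\e$ is the only solution of the perturbed system in an $\SH$-neighborhood of $\vec cU$, and that solution turns out to lie in $\R^nU$. Synchronization is a conclusion, not a restriction on competitors. Your remark that the nondegeneracy of $U$ is never used is the symptom. Your argument needs neither that hypothesis nor Theorem \ref{T1}-(ii), whose description of $L$ is purely algebraic. So it cannot rule out non-synchronized solutions of the perturbed system appearing near $\vec cU$.

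The missing step is short.
\begin{enumerate}
\item Let $\Phi_\e(\vec u)=\vec u-S_\e(\vec u)$, where $S_\e(\vec u)\in\SH$ is the Riesz representative of $\vec v\mapsto\int_\Omega\sum_{i,j}(a_{ij}+\e e_{ij})u_iv_iu_j^2\,dx$.
\item By Theorem \ref{T1}-(ii), $\ker\Phi_0'(\vec cU)=X_1\equiv\{(\vec\gamma\oslash\vec c)U:\vec\gamma\in K\}\subset\R^nU$. This is where the nondegeneracy of $U$ enters. Since $\Phi_0'(\vec cU)$ is a self-adjoint compact perturbation of the identity, its range is $X_1^\perp$.
\item Let $\Pi$ be the orthogonal projection onto $X_1^\perp$. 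For small $\vec v\in X_1$ and small $\e$, the implicit function theorem gives a unique small $\vec z(\vec v,\e)\in X_1^\perp$ with $\Pi\Phi_\e(\vec cU+\vec v+\vec z)=0$. Hence every solution near $\vec cU$ equals $\vec cU+\vec v+\vec z(\vec v,\e)$ for some small $\vec v$.
\item $S_\e$ maps $\R^nU$ into itself. Indeed, $-\Delta U+\lambda U=U^3$ makes $U$, placed in the $i$-th slot, the Riesz representative of $\vec\varphi\mapsto\int_\Omega U^3\varphi_i\,dx$. Also $\Pi$ preserves $\R^nU$ because $X_1\subset\R^nU$.
\item Solve the same auxiliary equation in the finite-dimensional space $X_1^\perp\cap\R^nU$. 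The restricted linearization is still injective there, hence invertible, so this gives a small solution in that space. By uniqueness, $\vec z(\vec v,\e)\in\R^nU$.
\end{enumerate}
Thus every solution in a small $\SH$-ball around $\vec cU$ is synchronized, and your algebraic analysis then gives full uniqueness. This is what the last paragraph of the paper's proof does, by rerunning its reduction in $M^\perp_{\vec d}\cap\R^nU$.
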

\begin{proof}
For convenience, let us denote the $i$-th coordinate of a vector $\vec{w}$ by $w_i$ in this proof. Recall that we defined the Sobolev space $\SH=(H^1_0(\Omega))^n$ if $\Omega$ is a bounded domain in $\R^N$. 
If $\Omega = \R^N,$ the space $\SH$ was defined by the radially symmetric functions in $(H^1(\R^N))^n$.
We define a map $S_\e : \SH \to \SH^*=\SH$
by 
$$
\langle S_{\e}(\vec{u}),\vec{v}\rangle_{\SH}=\int_{\Omega}\sum_{i,j=1}^n(a_{ij}+\e e_{ij})u_iv_iu_j^2dx.
$$
This map is well-defined by the continuous embedding $\SH\to(L^4(\Omega))^n$ and the Riesz representation theorem. We will solve \[\vec{u}-S_{\e}(\vec{u})=(0,\cdots,0),\] which is equivalent to solving \eqref{eq3} with $A$ replaced by $A^E_\e$. We apply the Lyapunov-Schmidt reduction to solve $\vec{u}-S_{\e}(\vec{u})=(0,\cdots,0)$ near $\vec{c}U$. The directional derivative of $S_{\e}(\vec{u})$ in the direction of $\vec{\varphi}=(\varphi_1,\cdots,\varphi_n) \in \SH$ is characterized by
\begin{equation}
\label{linearS}
\langle S'_{\e}(\vec{u})(\vec{\varphi}),\vec{v}\rangle_{\SH}=\int_{\Omega}\sum_{i,j=1}^n(a_{ij}+\e e_{ij})(\varphi_iv_iu_j^2+2u_iv_iu_j\varphi_j)dx.
\end{equation}
Let $K$ be the kernel of $A$ and $K^\perp$ the orthogonal complement of $K$ in $\R^n.$
We define 
\[ M \equiv \{\vec{d} U \in \SH \ | \ \vec{d}\in L\}.\]
For each $\vec{d} U  \in M,$ we define  the tangent space $T_{\vec{d}}M$ of $M$ at $\vec{d}U$ in $\SH$, which we could characterize by Theorem \ref{T1} 
\[ T_{\vec{d}}M \equiv \{(\vec{\gamma}\oslash \vec{d}) U \ | \ \vec{\gamma} \in K\}.\]
Let $ M_{\vec{d}}^\perp$ be the orthogonal complement of $T_{\vec{d}}M$ in $\SH$, that is,
\[M_{\vec{d}}^\perp\equiv\left\{\vec{u}\in\SH\ |\ \langle\vec{u},(\vec{\gamma}\oslash \vec{d}) U\rangle_{\SH}=0 \text{ for all } \vec{\gamma}\in K\right\}\]
and let $\pi_{\vec{d}}^\perp$ be the orthogonal projection of $\SH$ onto $M_{\vec{d}}^\perp$.

For each $\vec{d} \in L,$  we now define $\mathscr{L}_{\vec{d},\e}:M_{\vec{d}}^\perp\to M_{\vec{d}}^\perp$ by
$$
\mathscr{L}_{\vec{d},\e}(\vec{\varphi})=\vec{\varphi}-\pi_{\vec{d}}^\perp(S'_{\e}(\vec{d}U)(\vec{\varphi}))$$
and show that for each open neighborhood $L^\prime$ of  $\vec{c}$ in $L$ with $\overline{L^\prime} \subset L,$  there exist  $\e_0 > 0$ and $\gamma> 0$  such that for any $|\e| \leq \e_0$, $\vec{d}\in L^\prime$  and $\vec{\varphi}\in M_{\vec{d}}^\perp$, 
\begin{equation} \label{invertable}
\lVert \mathscr{L}_{\vec{d},\e}\vec{\varphi}\rVert_{\SH}\geq\gamma\lVert\vec{\varphi}\rVert_{\SH}.
\end{equation}
Suppose that it is not true. Then, there exist sequences 
$\{\vec{d}_l = (d^l_1,\cdots,d_n^l)\}_l \subset L^\prime$,
$\{\e_l\} \subset \R$ with $\lim_{l \to \infty}\e_l = 0$ and a sequence $\vec{\varphi}_l= (\varphi^l_1,\cdots,\varphi_n^l)\in M_{\vec{d}_l}^{\perp}$ satisfying 
$\lVert\vec{\varphi}_l\rVert_{\SH}=1$ and $\lVert \mathscr{L}_{\vec{d}_l,\e_l}(\vec{\varphi}_l)\rVert_{\SH}\rightarrow 0$ as $l\rightarrow\infty$. Passing to a subsequence, we may assume that $\lim_{l \to \infty}\vec{d}_l =\vec{d}_{\infty}\in \overline{L^\prime}$ and  $\vec{\varphi}_l$ converges weakly to $\vec{\varphi}_{\infty}$ in $\SH$ as $l \rightarrow \infty$. 
Now, the fact $\vec{\varphi}_l\in M_{\vec{d}_l}^{\perp}$ implies
\begin{equation*}
\begin{split}
&\langle\vec{\varphi}_{\infty},(\vec{\gamma}\oslash \vec{d}_{\infty})U\rangle_{\SH}=\lim_{l\to\infty}\langle\vec{\varphi}_{l},(\vec{\gamma}\oslash \vec{d}_{\infty})U\rangle_{\SH}\\
=&\lim_{l\to\infty}\langle\vec{\varphi}_{l},(\vec{\gamma}\oslash \vec{d}_{l})U\rangle_{\SH}+\lim_{l\to\infty}\langle\vec{\varphi}_{l},((\vec{\gamma}\oslash \vec{d}_{\infty})-\vec{\gamma}\oslash\vec{d}_l)U\rangle_{\SH}=0
\end{split}
\end{equation*}
for any $\vec{\gamma}\in K$. This implies $\vec{\varphi}_\infty\in M_{\vec{d}_{\infty}}^{\perp}$.
On the other hand, notice from the facts that $U$ is a solution of \eqref{singleeq} and $\sum_{j=1}^n a_{ij}(d_j^l)^2=1, i=1,\cdots,n,$ that for any $\vec{\gamma}\in K$,
\begin{equation*}
\begin{split}
\langle S'_0(\vec{d}_lU)(\vec{\varphi}_l),(\vec{\gamma}\oslash \vec{d}_l) U\rangle_{\SH}
=&\int_{\Omega}\sum_{i,j=1}^n a_{ij}(\varphi_i^l(d_j^l)^2+2d_i^ld_j^l\varphi_j^l)\cdot\frac{\gamma_i}{d_i^l}U^3dx\\
=&\int_{\Omega}\Big(\sum_{i=1}^n \varphi_i^l\cdot\frac{\gamma_i}{d_i^l}U^3+\sum_{j=1}^n\sum_{i=1}^n 2a_{ij}\gamma_i\cdot d_j^l\varphi_j^lU^3\Big)dx\\
=&\langle \vec{\varphi},(\vec{\gamma}\oslash \vec{d}_l) U\rangle_{\SH}.
\end{split}
\end{equation*}
This is equivalent to $\vec{\varphi}_l-S'_0(\vec{d}_lU)(\vec{\varphi}_l)=\pi_{\vec{d}_l}^\perp(\vec{\varphi}_l-S'_0(\vec{d}_lU)(\vec{\varphi}_l))$. Therefore, we can decompose $\vec{\varphi}_l-S'_0(\vec{d}_{\infty}U)(\vec{\varphi}_l)$ as follows:
\begin{equation*}
\begin{split}
    \vec{\varphi}_l-S'_0(\vec{d}_{\infty}U)(\vec{\varphi}_l)
    = & (\vec{\varphi}_l-\pi_{\vec{d}_l}^\perp (S'_{\e_l}(\vec{d}_lU)(\vec{\varphi}_l)))+\pi_{\vec{d}_l}^\perp(S'_{\e_l}(\vec{d}_lU)(\vec{\varphi}_l)-S'_{0}(\vec{d}_lU)(\vec{\varphi}_l))\\ 
    &+(S'_0(\vec{d}_lU)(\vec{\varphi}_l)-S'_0(\vec{d}_{\infty}U)(\vec{\varphi}_l)).
    \end{split}
\end{equation*}
It is easy to see that the limits of the second and third terms equal \[ \lim_{l \to \infty} \pi_{\vec{d}_l}^\perp(S'_{\e_l}(\vec{d}_lU)(\vec{\varphi}_l)-S'_{0}(\vec{d}_lU)(\vec{\varphi}_l)=\lim_{l \to \infty}(S'_0(\vec{d}_lU)(\vec{\varphi}_l)-S'_0(\vec{d}_{\infty}U)(\vec{\varphi}_l)) = 0 
 \ \ \ \textup { in } \ \ \SH.\]
 Since the first term $(\vec{\varphi}_l-\pi_{\vec{d}_l}^\perp (S'_{\e_l}(\vec{d}_lU)(\vec{\varphi}_l)))$ is merely $\mathscr{L}_{\vec{d}_i,\e_i}\vec{\varphi}_i$,
we see that \[\lim_{l \to \infty} (\vec{\varphi}_l-S'_0(\vec{d}_{\infty}U)(\vec{\varphi}_l)) = 0 \ \ \textup{  in } \ \  \SH. \]
Then,  since $\vec{\varphi}_l$ converges weakly to $\vec{\varphi}_{\infty}$ in $\SH$, 
we get
\begin{equation*}
\begin{split}
\langle\vec{\varphi}_{\infty}-S'_0(\vec{d}_{\infty}U)(\vec{\varphi}_{\infty}),\vec{v}\rangle_{\SH}
=&\lim_{l\to\infty}\langle\vec{\varphi}_{l}-S'_0(\vec{d}_{\infty}U)(\vec{\varphi}_{\infty}),\vec{v}\rangle_{\SH}\\
=&\lim_{l\to\infty}\langle S'_0(\vec{d}_{\infty}U)(\vec{\varphi}_{l})-S'_0(\vec{d}_{\infty}U)(\vec{\varphi}_{\infty}),\vec{v}\rangle_{\SH}=0
\end{split}
\end{equation*}
for any $\vec{v}\in\SH$.
This is equivalent to
$$
\vec{\varphi}_{\infty}-S'_0(\vec{d}_{\infty}U)(\vec{\varphi}_{\infty})=(0,\cdots,0).
$$
In this case, since $\vec{\varphi}_{\infty}\in M_{\vec{d}_{\infty}}^{\perp}$, we conclude from $(ii)$ of Theorem \ref{T1} that $\vec{\varphi}_\infty=(0,\cdots,0)$.
If $\Omega$ is bounded, then by the strong $(L^2(\Omega))^n$ convergence of $\vec{\varphi}_l$ to $(0,\cdots,0)$, $\int_{\Omega}\varphi_{j}^lU^2$ converges to $0$ for any $j$ as $l \rightarrow\infty$.  If $\Omega=\R^N$, then by the exponential decay of $U$ and the strong $(L^2(\tilde{\Omega}))^n$ convergence of $\vec{\varphi}_l$ to $(0,\cdots,0)$ on any bounded domain $\tilde{\Omega}$, 
$\int_{\Omega}\varphi_{j}^lU^2$ converges to 0 for any $j$ as $l\rightarrow\infty$. In either case, we see that $S'_{\e_l}(\vec{d}_lU)(\vec{\varphi}_l)\to(0,\cdots,0)$ in $\SH$, because of \eqref{linearS}. However, this contradicts $\lVert\vec{\varphi}_l\rVert_{\SH}=1$ and $\lVert \mathscr{L}_{\vec{d}_l,\e_l}(\vec{\varphi}_l)\rVert_{\SH}\rightarrow 0$ as $l\rightarrow\infty$; 
thus proves the claim \eqref{invertable}.

Now, note that $(\vec{u}+\vec{\varphi})-S_{\e}(\vec{u}+\vec{\varphi})$ can be expressed as
\begin{equation}
\label{linearization of S}
(\vec{u}+\vec{\varphi})-S_{\e}(\vec{u}+\vec{\varphi})=(\vec{u}-S_{\e}(\vec{u}))+(\vec{\varphi}-S'_{\e}(\vec{u})(\vec{\varphi}))-R(\vec{u},\vec{\varphi}),
\end{equation}
where the characterization of the remainder term $R$ is given by
$$
\langle R(\vec{u},\vec{\varphi}),\vec{v}\rangle_{\SH}=\int_{\Omega}\sum_{i,j=1}^n(a_{ij}+\e e_{ij})( u_iv_i\varphi_j^2+2\varphi_iv_iu_j\varphi_j+\varphi_iv_i\varphi_j^2)dx.
$$
For $|\e| \le \e_0$, $\vec{d} \in L^\prime$ and $\vec{\varphi} \in M_{\vec{d}}^\perp,$
we define 
\begin{equation} \label{F}
	F_{\vec{d},\e}(\vec{\varphi}) \equiv -\mathscr{L}_{\vec{d},\e}^{-1}(\pi_{\vec{d}}^\perp (\vec{d}U-S_{\e}(\vec{d}U))-\pi_{\vec{d}}^\perp R(\vec{d}U,\vec{\varphi})) \in M_{\vec{d}}^\perp. \end{equation}
This operator is well-defined by the invertibility of $\mathscr{L}_{\vec{d},\e}$ by \eqref{invertable} and the Fredholm alternative.
For each $\vec{d} \in L^\prime$,  we aim to find a fixed point $\vec{\varphi}_{\vec{d},\e} \in M_{\vec{d}}^\perp$ of $F_{\vec{d},\e}$. By applying $\mathscr{L}^{-1}_{\vec{d},\e}\circ\pi_{\vec{d}}^\perp$ on both sides of \eqref{linearization of S}, we see that finding such fixed point is equivalent to finding $\vec{\varphi}_{\vec{d},\e} \in M_{\vec{d}}^\perp$ which solves 
\begin{equation}
\label{orthsolve}
(\vec{d}U+\vec{\varphi})-\pi_{\vec{d}}^\perp S_{\e}(\vec{d}U+\vec{\varphi})=0.\end{equation}
To find the fixed point, we show that $F_{\vec{d},\e}$ is a contraction near $(0,\cdots,0)\in M_{\vec{d}}^\perp$. First of all, by the continuous embedding $\SH\rightarrow(L^4(\Omega))^n$ and the fact that $U\in L^4(\Omega)$, there exist positive constants $\beta_1$ and $C_1$ such that if $\vec{d}\in L^\prime$ and $\lVert\vec{\varphi}\rVert_{\SH}\leq\beta_1$, then $\lVert R(\vec{d}U,\vec{\varphi})\rVert_{\SH}\leq C_1 \lVert\vec{\varphi}\rVert_{\SH}^2$. 
Since for $\vec{\varphi},\vec{\rho} \in  M_{\vec{d}}^\perp$ and $\vec{v}\in\SH$ it holds that
\begin{equation*}
\begin{split}
& \langle R(\vec{u},\vec{\varphi})-R(\vec{u},\vec{\rho}),\vec{v}\rangle_{\SH}\\
&=\int_{\Omega}\sum_{i,j=1}^n(a_{ij}+\e e_{ij}) \Big(u_i(\varphi_j^2-\rho_j^2)+2u_j(\varphi_i\varphi_j-\rho_i\rho_j)+(\varphi_i\varphi_j^2-\rho_i\rho_j^2) \Big)v_idx\\
&=\int_{\Omega}\sum_{i,j=1}^n (a_{ij}+\e e_{ij}) \Big(u_i(\varphi_j+\rho_j)(\varphi_j-\rho_j)+2u_j((\varphi_i-\rho_i)\varphi_j+\rho_i(\varphi_j-\rho_j))\\
&\quad\quad+((\varphi_i-\rho_i)\varphi_j^2+\rho_i(\varphi_j+\rho_j)(\varphi_j-\rho_j))\Big)v_idx,
\end{split}
\end{equation*}
 there exist positive $\beta_2$, and $C_2$ such that for $\vec{d}\in L^\prime,$ 
 $\lVert\vec{\varphi}\rVert_{\SH}\leq\beta_2$ and $\lVert\vec{\rho}\rVert_{\SH}$ $\leq$ $\beta_2$, then \[\lVert R(\vec{d}U,\vec{\varphi})-R(\vec{d}U,\vec{\rho})\rVert_{\SH}\leq C_2 \max\{\lVert\vec{\varphi}\rVert_{\SH},\lVert\vec{\rho}\rVert_{\SH}\}\lVert\vec{\varphi}-\vec{\rho}\rVert_{\SH}.\] 
 Define $\beta=\min\{\beta_1,\beta_2, \gamma/2C_1,\gamma/2C_2\}$. Since $U\in L^6(\Omega)$ is a solution of \eqref{singleeq}, there exists positive $\e_1$ such that for any $\vec{d}\in L^\prime$ and $|\e| \leq\e_1$, it holds that $\lVert \vec{d}U-S_{\e}(\vec{d}U)\rVert_{\SH}\leq\frac12\beta\gamma$. Then, for any $\vec{d}\in L^\prime$ and $|\e| \leq\min\{\e_0,\e_1\}$ and $\vec{\varphi}\in M_{\vec{d}}^\perp$ satisfying $\lVert\vec{\varphi}\rVert_{\SH}\leq\beta$, 
 the norm of $F_{\vec{d},\e}(\vec{\varphi})$ is bounded by
\begin{equation*}
\begin{split}
\lVert F_{\vec{d},\e}(\vec{\varphi})\rVert_{\SH}&\leq\frac{1}{\gamma}(\lVert \vec{d}U-S_{\e}(\vec{d}U)\rVert_{\SH}+\lVert R(\vec{d}U,\vec{\varphi})\rVert_{\SH})\\
&\leq\frac1{\gamma}\left(\frac12\beta\gamma+C_1\beta^2\right) \leq\frac12\beta+\frac12\beta=\beta.
\end{split}
\end{equation*}
This implies that for those $\vec{d}\in L'$ and $|\e| \le \min\{\e_0,\e_1\},$
$F_{\vec{d},\e}$ is a map from \[ B_{\vec{d},\e}(\beta) \equiv \{\vec{\varphi} \in M_{\vec{d}}^\perp : \lVert\vec{\varphi}\rVert_{\SH}\leq\beta\}\]
 to itself. 
 Note that for those $\vec{d} \in L^\prime$, $|\e| \le \min\{\e_0,\e_1\}$, and $\vec{\varphi}, \vec{\rho} \in B_{\vec{d},\e}(\beta),$
 \begin{equation*}
\begin{split}
\lVert F_{\vec{d},\e}(\vec{\varphi})-F_{\vec{d},\e}(\vec{\rho})\rVert_{\SH}&=\lVert \mathscr{L}^{-1}_{\vec{d},\e}(\pi^\perp_{\vec{d}}(R(\vec{d}U,\vec{\varphi})-R(\vec{d}U,\vec{\rho})))\rVert_{(L^2(\Omega))^n}\\
&\leq\frac{C_2\beta}{\gamma}\lVert\vec{\varphi}-\vec{\rho}\rVert_{\SH}\\
&\leq\frac12\lVert\vec{\varphi}-\vec{\rho}\rVert_{\SH}.
\end{split}
\end{equation*}
This implies that we have a unique fixed point $\vec{\varphi}_{\vec{d},\e}$ 
of $F_{\vec{d},\e}$ in $B_{\vec{d},\e}(\beta).$
 As mentioned above, this point satisfies $(\vec{d}U+\vec{\varphi}_{\vec{d},\e})-\pi_{\vec{d}}^\perp S_{\e}(\vec{d}U+\vec{\varphi}_{\vec{d},\e})=0$. 
 Since $\vec{\varphi}_{\vec{d},\e} = \lim_{k \to \infty}F_{\vec{d},\e}^{k}(0,\cdots,0),$ we see that
 \begin{equation}
 \label{phismall}
\begin{split}
\lVert\vec{\varphi}_{\vec{d},\e}\rVert_{\SH}&\leq\sum_{i=1}^\infty\lVert F_{\vec{d},\e}^{i-1}(F_{\vec{d},\e}(0,\cdots,0))-F_{\vec{d},\e}^{i-1}(0,\cdots,0)\rVert_{\SH}\\
&\leq \sum_{k=1}^\infty \frac{1}{2^{k-1}}\lVert F_{\vec{d},\e}(0,\cdots,0)-(0,\cdots,0)\rVert_{\SH}\\
&=2\lVert \mathscr{L}^{-1}_{\vec{d},\e}(\pi^\perp_{\vec{d}}(\vec{d}U-S_{\e}(\vec{d}U)))\rVert_{\SH}\\
&\leq\frac2{\gamma}\lVert\vec{d}U-S_{\e}(\vec{d}U)\rVert_{\SH}\\
&\leq C\e
\end{split}
\end{equation}
for some $C>0$.

We now aim to show the $C^1$-continuity of $\vec{\varphi}_{\vec{d},\e}$ with respect to $\vec{d}\in L'$ and small $\e$. The function $G:L'\times [-\min\{\e_0,\e_1\},\min\{\e_0,\e_1\}]\times \SH\to \SH$ defined by
$$
G(\vec{d},\e,\vec{\varphi})=(\vec{d}U+\vec{\varphi})-\pi_{\vec{d}}^{\perp}(S_{\e}(\vec{d}U+\pi_{\vec{d}}^{\perp}\vec{\varphi}))
$$
is a $C^1$-map. The directional derivative of $G$ with respect to the last component $\vec{\varphi}$ in the direction $\vec{\psi}$ equals
\begin{equation*}
\begin{split}
\frac{\partial G}{\partial\vec{\varphi}}(\vec{d},\e,\vec{\varphi})(\vec{\psi})=&\vec{\psi}-\pi_{\vec{d}}^{\perp}(S'_{\e}(\vec{d}U+\pi_{\vec{d}}^{\perp}\vec{\varphi})(\pi_{\vec{d}}^{\perp}\vec{\psi}))\\
=&\mathscr{L}_{\vec{d},\e}(\vec{\psi})-\pi_{\vec{d}}^{\perp}\big((S'_{\e}(\vec{d}U+\pi_{\vec{d}}^{\perp}\vec{\varphi})-S'_{\e}(\vec{d}U))(\pi_{\vec{d}}^{\perp}\vec{\psi})\big).
\end{split}
\end{equation*}
This formula, \eqref{invertable}, \eqref{phismall}, and the continuous embedding $\SH\rightarrow(L^4(\Omega))^n$ imply
\begin{equation*}
\begin{split}
\Big\lVert\frac{\partial G}{\partial\vec{\varphi}}(\vec{d},\e,\vec{\varphi}_{\vec{d},\e})(\vec{\psi})\Big\rVert_{\SH}\geq & \gamma\lVert\vec{\psi}\rVert_{\SH}-C_3\lVert\vec{\varphi}_{\vec{d},\e}\rVert_{\SH}\lVert\vec{\psi}\rVert_{\SH}\\
\geq & \gamma(1-C_4\e)\lVert\vec{\psi}\rVert_{\SH}
\end{split}
\end{equation*}
for some positive $C_3,C_4$ when $|\e|\leq\min\{\e_0,\e_1\}$. Since $\vec{\varphi}=\vec{\varphi}_{\vec{d},\e}$ solves $G(\vec{d},\e,\vec{\varphi})=0$, by the implicit function theorem, we conclude that $\vec{\varphi}_{\vec{d},\e}$ is a $C^1$-map with respect to $\vec{d}\in L'$ and $|\e|\leq\min\{\e_0,\e_1,(2C_4)^{-1}\}$.

For an energy functional $I_{\e}:L'\times \{\vec{\varphi}:\lVert\vec{\varphi}\rVert_{\SH}\leq C\e\}\to \R$ given by
$$
I_{\e}(\vec{d},\vec{\varphi})=\frac12\lVert \vec{d}U+\vec{\varphi}\rVert^2_\Omega-\frac14\int_{\Omega}\sum_{i,j=1}^n(a_{ij}+\e e_{ij})(d_iU+\varphi_i)^2(d_jU+\varphi_j)^2dx.
$$
we now aim to prove the existence of $\vec{d}_{\e}\in L^\prime$ such that $\nabla_{L'}I_{\e}(\vec{d}_{\e},\vec{\varphi})|_{\vec{\varphi}=\vec{\varphi}_{\vec{d}_{\e},\e}}=0$.  First of all, since 
\begin{equation*}
\int_\Omega \sum_{i=1}^n \nabla(d_iU)\cdot\nabla\varphi_i+\lambda d_iU\cdot\varphi_idx
=\int_\Omega\sum_{i=1}^nd_iU^3\cdot\varphi_idx=\int_\Omega\sum_{i,j=1}^n a_{ij} d_i^2d_jU^3\cdot\varphi_jdx,
\end{equation*}
we see that for any $\vec{d} \in L^\prime$ and $\lVert\vec{\varphi}\rVert_{\SH}\leq C\e$,
\begin{equation*}
\begin{split}
I_{\e}(\vec{d},\vec{\varphi})=&I(\vec{d}U)+\int_\Omega \sum_{i=1}^n \nabla(d_iU)\cdot\nabla\varphi_i+\lambda d_iU\cdot\varphi_idx+\frac12\lVert\vec{\varphi}\rVert^2_{\Omega}\\
&-\frac14\int_{\Omega}\sum_{i,j=1}^n\e e_{ij}d_i^2d_j^2U^4dx-\int_\Omega\sum_{i,j=1}^n(a_{ij}+\e e_{ij})d_i^2d_jU^3\cdot\varphi_jdx\\
&-\frac14\int_{\Omega}\sum_{i,j=1}^n (a_{ij}+\e e_{ij})(d_i^2U^2\cdot\varphi_j^2+4d_id_jU^2\cdot\varphi_i\varphi_j+2d_iU\cdot\varphi_i\varphi_j^2)dx\\
&-\frac14\int_{\Omega}\sum_{i,j=1}^n (a_{ij}+\e e_{ij})\varphi_i^2(d_j^2U^2+2d_jU\varphi_j+\varphi_j^2)dx\\
=&I(\vec{d}U)+\frac12\lVert\vec{\varphi}\rVert^2_{\Omega}-\frac14\e G(d_1^2,\cdots,d_n^2)\int_{\Omega}U^4dx-\int_\Omega\sum_{i,j=1}^n \e e_{ij}d_i^2d_jU^3\cdot\varphi_jdx\\
&-\frac14\int_{\Omega}\sum_{i,j=1}^n (a_{ij}+\e e_{ij})(d_i^2U^2\cdot\varphi_j^2+4d_id_jU^2\cdot\varphi_i\varphi_j+2d_iU\cdot\varphi_i\varphi_j^2)dx\\
&-\frac14\int_{\Omega}\sum_{i,j=1}^n (a_{ij}+\e e_{ij})\varphi_i^2(d_j^2U^2+2d_jU\varphi_j+\varphi_j^2)dx.
\end{split}
\end{equation*}
Then, a function $f : L'\times [-\min\{\e_0,\e_1,(2C_4)^{-1}\},\min\{\e_0,\e_1,(2C_4)^{-1}\}]\to K^{*}=K$ defined by
$$
f(\vec{d},\e)\cdot\vec{\gamma}=
\begin{dcases}
    \frac{1}{\e}\nabla_{L'}I_{\e}(\vec{d},\vec{\varphi})(\vec{\gamma}\oslash\vec{d})\big|_{\vec{\varphi}=\vec{\varphi}_{\vec{d},\e}}\text{ if }\e\neq 0\\
    -\frac12\nabla_K G(d_1^2,\cdots,d_n^2)(\vec{\gamma})\int_{\Omega}U^4dx\text{ if }\e=0
\end{dcases}
$$
is characterized for $\vec{\varphi}_{\vec{d},\e} \equiv (\tilde{\varphi}_1,\cdots,\tilde{\varphi}_n)$ by
\begin{equation*}
\begin{split}
&f(\vec{d},\e)\cdot\vec{\gamma}\\
=&-\frac12 \nabla_K G(d_1^2,\cdots,d_n^2)(\vec{\gamma})\int_{\Omega}U^4dx-\int_\Omega\sum_{i,j=1}^n e_{ij} \Big(2\gamma_id_j+d_i^2\cdot\frac{\gamma_j}{d_j}\Big)U^3\cdot\tilde{\varphi}_jdx\\
&-\frac{1}{4\e}\int_{\Omega}\sum_{i,j=1}^n (a_{ij}+\e e_{ij})\bigg(2\gamma_iU^2\cdot\tilde{\varphi}_j^2+4\Big(\frac{\gamma_i}{d_i}\cdot d_j+d_i\cdot \frac{\gamma_j}{d_j}\Big)U^2\cdot\tilde{\varphi}_i\tilde{\varphi}_j\\
&\quad\quad\quad\quad\quad\quad\quad\quad\quad\quad+2d_iU\cdot\tilde{\varphi}_i\tilde{\varphi}_j^2\bigg)dx\\
&-\frac{1}{4\e}\int_{\Omega}\sum_{i,j=1}^n (a_{ij}+\e e_{ij})\tilde{\varphi}_i^2\Big(\gamma_j U^2+2\frac{\gamma_j}{d_j}U\tilde{\varphi}_j+\tilde{\varphi}_j^2\Big)dx
\end{split}
\end{equation*}
and is a $C^1$-map from the $C^1$-continuity of $(\vec{d},\e)\mapsto\vec{\varphi}_{\vec{d},\e}$ and the fact that $\vec{\varphi}_{\vec{d},0}=(0,\cdots,0)$. Now, for $\vec{\gamma},\vec{\eta}\in K$, it holds that
$$
\nabla_{L'}f(\vec{c},0)(\vec{\gamma})\cdot\vec{\eta}=-\nabla_K^2G(c_1^2,\cdots,c_n^2)(\vec{\gamma},\vec{\eta})\int_{\Omega}U^4dx.
$$
By the nondegeneracy of $\nabla_K^2(c_1^2,\cdots,c_n^2)$ and the implicit function theorem, we see that for any small $\e>0$, there exists a unique solution $\vec{d}_{\e}$ solving
$$
f(\vec{d}_\e,\e)\cdot\vec{\gamma}=\frac{1}{\e}\nabla_{L'}I_{\e}(\vec{d}_\e,\vec{\varphi})(\vec{\gamma}\oslash\vec{d}_\e)\big|_{\vec{\varphi}=\vec{\varphi}_{\vec{d}_\e,\e}}\equiv0.
$$
Therefore, there exists a unique solution $\vec{u}_\e=\vec{d}_{\e}U+\vec{\varphi}_{\vec{d}_{\e},\e}$ of \eqref{eq3} with $A$ replaced by $A^E_\e$ such that $\vec{u}_\e$ converges to the positive vector solution $\vec{c}U$ as $\e\rightarrow0$. Furthermore, performing this procedure in the setting where $\mathscr{L}_{\vec{d},\e}:M^{\perp}_{\vec{d}}\cap\R^nU\to M^{\perp}_{\vec{d}}\cap\R^nU$ yields another sequence $\vec{u}_{\e}$, but this time additionally satisfying $\vec{u}_{\e}\in\R^nU$. This new $\vec{u}_{\e}$ and the original $\vec{u}_{\e}$ should be the same by the uniqueness of $\vec{u}_{\e}$. We thus conclude that for any small $\e>0$, there exists a unique positive vector solution $\vec{u}_\e$ of \eqref{eq3} with $A$ replaced by $A^E_\e$ such that $\vec{u}_\e\in\R^nU$ and it converges to the positive vector solution $\vec{c}U$ as $\e\rightarrow0$.
\end{proof}
\begin{remark}
The criticality condition $\nabla_K G(d_1^2,\cdots,d_n^2) = 0 $ is certainly necessary
for a bifurcation as we see in the previous example with
 \[
A\equiv  \begin{pmatrix}
1 & 1  \\
1 & 1 
\end{pmatrix},\quad
E\equiv  \begin{pmatrix}
\alpha & 0  \\
0 & \beta
\end{pmatrix}.
\]
In fact, we see in this case that $EK= \langle (\alpha,-\beta)\rangle$ and the point $(d_1,d_2) \in L$  satisfying $EK \perp (d_1^2,d_2^2)$ is given by 
$\Big (\sqrt {\frac{\beta}{\alpha+\beta}},
\sqrt{\frac{\alpha}{\alpha+\beta}} \Big ).$

The non-degeneracy condition  of $\nabla^2_K G(d_1^2,\cdots,d_n^2) $ is also necessary since if $E=A,$ the non-degeneracy does not hold and the continuation of $L$ instead of a bifurcation holds for small $|\e|.$
As a different type of example, for $n=3$, we consider
$$
A=\begin{pmatrix}
0&2&1\\
2&0&1\\
1&1&1
\end{pmatrix},
$$
$\vec{d}=(\frac{1}{\sqrt{6}},\frac{1}{\sqrt{6}},\frac{2}{\sqrt{6}})$, and
$$
E=\begin{pmatrix}
1&0&0\\
0&-1&0\\
0&0&0
\end{pmatrix}.
$$
In this case, the kernel of $K$ of $A$ is spanned by $(1,1,-2)$ and the condition $E\vec{\gamma}\perp(d_1^2,\cdots,d_n^2)$ holds for any $\vec{\gamma}\in K$. However, the inverse of $A+\e E$ equals
$$
\begin{pmatrix}
\e&2&1\\
2&-\e&1\\
1&1&1
\end{pmatrix}^{-1}=\frac{1}{\e^2}\begin{pmatrix}
1+\e&1&-2-\e\\
1&1-\e&-2+\e\\
-2-\e&-2+\e&4+\e^2
\end{pmatrix}
$$
so that
$$
\begin{pmatrix}
\e&2&1\\
2&-\e&1\\
1&1&1
\end{pmatrix}^{-1}\begin{pmatrix}
1\\
1\\
1
\end{pmatrix}=\frac{1}{\e^2}\begin{pmatrix}0\\0\\ \e^2\end{pmatrix}=\begin{pmatrix}0\\0\\1\end{pmatrix}.
$$
Therefore, $\vec{u}_\e=(0,0,1)U$ is the unique solution of \eqref{eq3} with $A$ replaced by $A+\e E$ in $\R^nU$, but $\vec{u}_\e$ does not converge to $\vec{d}U$ as $\e\rightarrow0$. This is because $E(1,1,-2)\cdot\vec{\eta}=0$ for any $\vec{\eta}\in K$. This example shows that the existence of $\vec{\eta}\in K$ satisfying $E\vec{\gamma}\cdot\vec{\eta}\neq 0$ for any $\vec{\gamma}\in K$ is a crucial condition.
\end{remark}



Next, we show that in certain cases if there exists a semi-vector nonnegative solution in $\R^nU$ of Morse index 1, then the positive vector solution in $\R^nU$, if exists, must have Morse index at least 2.

\begin{thm}
\label{T4}
Let us assume that the interaction matrix $A=(a_{ij})_{1\leq i,j\leq n}$ satisfy $a_{ij}=a_{ji} \ge 0 $ for $1\leq i,j\leq n$. For \eqref{eq3}, we assume that there exist a semi-vector nonnegative solution of form $\vec{c}U=(c_1,\cdots,c_n)U \in \R^nU$ which has Morse index 1 and 
$$
A\begin{pmatrix}
    c_1^2\\ \vdots\\ c_n^2
\end{pmatrix}\neq
\begin{pmatrix}
    1\\ \vdots\\ 1
\end{pmatrix}.
$$
Then, if  $\vec{d}U=(d_1,\cdots,d_n)U$ is a positive vector solution of \eqref{eq3}, 
the Morse index of the solution $\vec{d}U$ is at least 2.
\end{thm}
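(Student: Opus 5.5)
The plan is to convert the Morse index statement into a statement about the spectrum of $A$, and then exhibit a two-dimensional subspace of $\R^n$ on which the quadratic form $x\mapsto \langle Ax,x\rangle$ is positive definite. By Theorem \ref{T1}(i), the Morse index of the positive vector solution $\vec{d}U$ equals the number of positive eigenvalues of $A$, counted with multiplicity. By the Courant--Fischer min-max principle, it therefore suffices to find two linearly independent vectors spanning a subspace on which $A$ is positive definite.

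\textbf{Step 1: extract information from the Morse index $1$ hypothesis.} After reordering indices, write $\vec{c}=(c_1,\cdots,c_m,0,\cdots,0)$ with $c_1,\cdots,c_m>0$ and $m<n$. By \eqref{Bm1} we have $(Ap)_i=1$ for $i\le m$, where
\[ p\equiv(c_1^2,\cdots,c_m^2,0,\cdots,0). \]
For $i>m$, the diagonal entry is $b_{ii}=\sum_{k}a_{ik}c_k^2=(Ap)_i$. By Theorem \ref{T1-1}, the Morse index equals the number of positive eigenvalues of $A_m$ plus $\sum_{i>m}f(b_{ii})$.
\begin{itemize}
\item $A_m$ is a nonzero symmetric nonnegative matrix (since $A_m(c_1^2,\cdots,c_m^2)^T=(1,\cdots,1)^T$), so it has at least one positive eigenvalue.
\item Hence Morse index $1$ forces $f(b_{ii})=0$, that is, $b_{ii}\le \omega_1=1$, for every $i>m$.
\item The hypothesis $Ap\neq(1,\cdots,1)^T$ gives some $j>m$ with $b_{jj}\ne1$, and therefore $b_{jj}<1$.
\end{itemize}

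\textbf{Step 2: compute the Gram-type form on a candidate subspace.} Let
\[ q\equiv(d_1^2,\cdots,d_n^2), \]
so that $Aq=(1,\cdots,1)^T$ by \eqref{Ac=1}, and take the subspace spanned by $p$ and $q$. Set $s\equiv\sum_{i\le m}c_i^2>0$ and $t\equiv\sum_i d_i^2$.
\begin{itemize}
\item $\langle Ap,p\rangle=\sum_{i\le m}c_i^2=s$.
\item $\langle Aq,q\rangle=t$.
\item By symmetry of $A$, $\langle Ap,q\rangle=\langle p,Aq\rangle=s$.
\item Computing $\langle Ap,q\rangle$ the other way instead gives $\sum_{i\le m}d_i^2+\sum_{i>m}b_{ii}d_i^2$.
\end{itemize}
Since $d_i>0$, $b_{ii}\le1$ for all $i>m$, and $b_{jj}<1$, the last expression is strictly less than $t$. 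So $s<t$.

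\textbf{Step 3: conclude.} The vectors $p$ and $q$ are linearly independent, because $q$ has a positive $n$-th entry while $p$ vanishes there. In the basis $\{p,q\}$ the form is represented by the matrix $\begin{pmatrix} s& s\\ s& t\end{pmatrix}$. This matrix has $s>0$ and determinant $s(t-s)>0$, so it is positive definite. Hence $A$ has at least two positive eigenvalues, and Theorem \ref{T1}(i) gives that the Morse index of $\vec{d}U$ is at least $2$.

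The only delicate point is Step 1, namely reading off $b_{ii}\le1$ from the index formula \eqref{eigenformula}. This uses that $\omega_1=1$ is the first eigenvalue in \eqref{speceq}, so that $f(x)\ge1$ whenever $x>1$, together with the fact that $A_m$ already contributes one to the index.
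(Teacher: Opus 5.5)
Your proof is correct and is essentially the paper's argument. The paper evaluates $I''(\vec{d}U)$ on $((k\,\vec{d}\odot\vec{d}+\vec{c}\odot\vec{c})\oslash\vec{d})U$, which is the form $\vec{\gamma}\mapsto\langle A\vec{\gamma},\vec{\gamma}\rangle$ on your $\mathrm{span}\{p,q\}$; its negative-discriminant condition is your inequality $s<t$, obtained the same way from $A(\vec{c}\odot\vec{c})\le\vec{1}$ (via Theorem \ref{T1-1}) with strict inequality in some component, and routing through Theorem \ref{T1}(i) with an explicit $2\times 2$ Gram matrix just makes the two-dimensionality of the negative space a bit more transparent than the paper's closing orthogonality remark.
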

\begin{proof}
Since $\vec{d}U$ is a positive vector solution of \eqref{eq3}, it holds that
$$
A\begin{pmatrix}
    d_1^2\\ \vdots\\ d_n^2
\end{pmatrix}=\vec{1}\equiv\begin{pmatrix}
    1\\ \vdots\\ 1
\end{pmatrix}.
$$
Since the Morse index of $\vec{c}U$ is 1, Theorem \ref{T1-1} implies that
\begin{equation}
A\begin{pmatrix}
    c_1^2\\ \vdots\\ c_n^2
\end{pmatrix}\leq\vec{1}
\end{equation}
componentwise.
We therefore conclude that
\begin{equation} \label{minus}
A\begin{pmatrix}
    d_1^2-c_1^2\\ \vdots\\ d_n^2-c_n^2
\end{pmatrix}\gneq
\begin{pmatrix}
    0\\ \vdots\\ 0
\end{pmatrix}
\end{equation}
componentwise, although the left hand side is not $(0,\cdots,0)$ by the assumption. Recall that
\begin{equation}
I''(\vec{d}U)(\vec{\phi},\vec{\phi})=\lVert\vec{\phi}\rVert_{\Omega}^2-\int_{\Omega}U^2\left(\sum_{i,j=1}^n d_i\phi_i\cdot 2a_{ij}\cdot d_j\phi_j+\sum_{i=1}^n \phi_i^2\right)dx.
\end{equation}
In particular, for $\vec{\phi}\in\R^n U$, we have
\begin{equation}
I''(\vec{d}U)(\vec{\phi},\vec{\phi})=-\int_{\Omega}U^2\left(\sum_{i, j=1}^n d_i\phi_i\cdot 2a_{ij}\cdot d_j\phi_j\right)dx.
\end{equation}
This implies $I''(\vec{d}U)(\vec{d}U,\vec{d}U)<0$. Now, for $k\in \R$, we define $\vec{\phi}_k=(g_1,\cdots,g_n)U$ by $g_i=kd_i+\frac{c_i^2}{d_i}$, $1\leq i\leq n$. Then, we have
\begin{equation}
I''(\vec{d}U)(\vec{\phi}_k,\vec{\phi}_k)=-\int_{\Omega}2U^4 (k\vec{d}\odot\vec{d}+\vec{c}\odot\vec{c})\cdot A(k\vec{d}\odot\vec{d}+\vec{c}\odot\vec{c})dx.
\end{equation}
The discriminant $D$ of the following quadratic polynomial with respect to $k\in \R$
\begin{equation}
\begin{split}
    &(k\vec{d}\odot\vec{d}+\vec{c}\odot\vec{c})\cdot A(k\vec{d}\odot\vec{d}+\vec{c}\odot\vec{c})\\
    =&k^2(\vec{d}\odot\vec{d})\cdot A(\vec{d}\odot\vec{d})+2k(\vec{c}\odot\vec{c})\cdot A(\vec{d}\odot\vec{d})+(\vec{c}\odot\vec{c})\cdot A(\vec{c}\odot\vec{c})\\
    =&k^2(\vec{d}\odot\vec{d})\cdot\vec{1}_n+2k(\vec{c}\odot\vec{c})\cdot\vec{1}_n+(\vec{c}\odot\vec{c})\cdot \vec{1}_n
\end{split}
\end{equation}
satisfies
\begin{equation}
\begin{split}
    \frac{D}{4}&=((\vec{c}\odot\vec{c})\cdot\vec{1}_n)^2-((\vec{d}\odot\vec{d})\cdot\vec{1}_n)((\vec{c}\odot\vec{c})\cdot \vec{1}_n)\\
    &=((\vec{c}\odot\vec{c})\cdot\vec{1}_n)((\vec{c}\odot\vec{c}-\vec{d}\odot\vec{d})\cdot\vec{1}_n)\\
    &=((\vec{c}\odot\vec{c})\cdot\vec{1}_n)((\vec{c}\odot\vec{c}-\vec{d}\odot\vec{d})\cdot A(\vec{d}\odot\vec{d}))\\
    &=((\vec{c}\odot\vec{c})\cdot\vec{1}_n)(A(\vec{c}\odot\vec{c}-\vec{d}\odot\vec{d})\cdot (\vec{d}\odot\vec{d}))\\
    &<0,
\end{split}
\end{equation}
where we use \eqref{minus} in the last inequality.
Therefore, $I''(\vec{d}U)(\vec{\phi}_k,\vec{\phi}_k)<0$ holds for any $k\in\R$. 
Since $\vec{\phi}_k$ is orthogonal to $\vec{d}U$ in $\SH(\Omega)$ for some $k \in \R,$ we see that the Morse index of $\vec{d}U$ is at least 2.
\end{proof}
If $\det(A)\neq0$ and there exists a positive vector solution $\vec{d}U$ of \eqref{eq3}, the condition $$
A\begin{pmatrix}
    c_1^2\\ \vdots\\ c_n^2
\end{pmatrix}\neq
\begin{pmatrix}
    1\\ \vdots\\ 1
\end{pmatrix}
$$
holds in Theorem \ref{T4}.
Thus, we have the following result.
\begin{corollary}
Let us assume that the interaction matrix $A=(a_{ij})_{1\leq i,j\leq n}$ satisfy $a_{ij}=a_{ji} \ge 0 $ for $1\leq i,j\leq n$ and $\det(A)\neq0$. 
If there exists a semi-vector nonnegative solution in $\R^nU$ which has Morse index 1, the Morse index of the positive vector solution in $\R^nU$, if exists, is at least 2.
\end{corollary}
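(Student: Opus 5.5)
The corollary should follow from Theorem \ref{T4} once we verify its one extra hypothesis. Let $\vec{c}U=(c_1,\cdots,c_n)U\in\R^nU$ be the semi-vector nonnegative solution of Morse index $1$, and let $\vec{d}U$ be a positive vector solution in $\R^nU$. We need
\[
A\begin{pmatrix} c_1^2\\ \vdots\\ c_n^2\end{pmatrix}\neq \begin{pmatrix} 1\\ \vdots \\ 1\end{pmatrix};
\]
Theorem \ref{T4} then gives that the Morse index of $\vec{d}U$ is at least $2$.

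We argue by contradiction. Suppose $A(c_1^2,\cdots,c_n^2)^T=\vec{1}$. Since $\vec{d}U$ is a positive vector solution in $\R^nU$, it satisfies \eqref{Ac=1}, so $A(d_1^2,\cdots,d_n^2)^T=\vec{1}$ as well. Subtracting gives
\[
A\big(d_1^2-c_1^2,\cdots,d_n^2-c_n^2\big)^T=0.
\]
Because $\det(A)\neq0$, this forces $d_i^2=c_i^2$ for every $i$. But $\vec{c}U$ is semi-vector, so $c_j=0$ for some $j$, while $d_j>0$. This contradiction shows the hypothesis of Theorem \ref{T4} holds.

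The only point needing care is that Theorem \ref{T4} uses exactly the semi-vector solution of Morse index one together with the inequality $A(\vec{c}\odot\vec{c})\ne\vec{1}$; both are now available, so the conclusion follows directly. There is no real obstacle: the whole content is the linear-algebra observation that invertibility of $A$ makes the solution of $A\vec{x}=\vec{1}$ unique, so it cannot have both a zero component and all components positive.
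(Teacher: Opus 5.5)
Your proposal is correct and is essentially the paper's argument. The paper likewise notes that when $\det(A)\neq 0$ and a positive vector solution $\vec{d}U$ exists, the unique solution of $A\vec{x}=(1,\cdots,1)^T$ is $\vec{d}\odot\vec{d}$, which has all entries positive. Hence the semi-vector $\vec{c}$ cannot satisfy $A(\vec{c}\odot\vec{c})=(1,\cdots,1)^T$, and Theorem \ref{T4} applies directly.
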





\section{Upper bound of the Morse index for strong interspecies forces}
In this section, we additionally assume large attractive interaction forces between different components, that is, we consider the following problem 
\begin{equation} \label{nsystem}
	  - \Delta v_i + \lambda v_i =  \beta_{ii} v_i^3+\sum_{j\not=i}^n \beta \beta_{ij}v_iv_j^2
	  \quad \text{in } \Omega,  \quad v_i \in H\quad (1\leq i\leq n),
	\end{equation}
where $\beta_{ij}=\beta_{ji}\geq0$ for $1\leq i,j\leq n$ and $\beta > 0$ is large.
By a transformation $v_i = u_i/\sqrt{\beta},$ we get a limit system
\begin{equation}\label{eq2}
	  - \Delta u_i +\lambda u_i = \sum_{j\not=i}^n \beta_{ij}u_iu_j^2
	  \quad \text{in } \Omega,  \quad u_i \in H\quad (1\leq i\leq n).
\end{equation}
In this case, the interaction matrix $A=(a_{ij})_{1\leq i,j\leq n}$ is given as
\begin{equation} \label{diagzero}
    a_{ij}=\begin{cases}
        0\quad\mathrm{if\ }i=j,\\
        \beta_{ij}\quad\mathrm{if\ }i\neq j.
    \end{cases}
\end{equation}

We now aim to show that the Morse index of a positive vector solution $\vec{u}\in\R^nU$ to a system \eqref{eq3} is at most $(n-2)$, instead of $n$. We prove it by using Theorem \ref{T1} and the following lemma which extends Descartes' rule of signs.

For any polynomial $f(x)$ of order $n$, the number of positive real roots is less than or equal to the number of changes of sign of coefficients of $f(x)$. Also, the number of negative real roots is less than or equal to the number of changes of sign of coefficients of $f(-x)$. This is Descartes' rule of signs. The next result says that if a polynomial $f$ of order $n$ has exactly $n$ real roots, then the number of positive real roots of $f$ is exactly the number of changes of sign of coefficients of $f(x)$.


\begin{lemma}
\label{Des}
Assume that a polynomial $f$ of order $n$ has $n$ real roots. Then, the number of positive real roots of $f$ equals the number of changes of sign of coefficients of $f(x)$.
\end{lemma}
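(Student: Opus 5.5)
The plan is to combine the classical Descartes bound applied twice, once to $f(x)$ and once to $f(-x)$, with a count of sign changes that ties the two together. Write $f(x)=a_n x^n+\cdots+a_0$ with $a_n\neq 0$. First handle the root at zero: if $0$ is a root of multiplicity $r$, then $a_0=\cdots=a_{r-1}=0$ and $a_r\ne 0$. We factor $f(x)=x^r g(x)$, where $g$ has degree $n-r$, has $n-r$ real roots, all nonzero, and $g(0)\neq 0$. This factorization changes neither the positive roots nor the sign changes of the coefficient sequence, since zero coefficients are skipped when counting. So we may assume $a_0\neq 0$, and then all $n$ roots are real and nonzero, giving $p+q=n$, where $p$ and $q$ are the numbers of positive and negative roots counted with multiplicity.

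Let $V(f)$ denote the number of sign changes in the nonzero coefficients of $f$. Descartes' rule gives $p\le V(f(x))$ and $q\le V(f(-x))$. The key combinatorial step is to show
\[
V(f(x))+V(f(-x))\le n .
\]
Consider two consecutive nonzero coefficients $a_i,a_j$ with $i<j$ and no nonzero coefficient strictly between them. Their gap is $j-i$. In $f(-x)$ the corresponding coefficients are $(-1)^i a_i$ and $(-1)^j a_j$, so the pair contributes one sign change to exactly one of $f(x)$, $f(-x)$ when $j-i$ is odd. When $j-i$ is even, it contributes to both or to neither. In every case the combined contribution is at most $j-i$: if $j-i=1$ it is exactly $1$, and if $j-i\ge 2$ it is at most $2\le j-i$. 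Summing over consecutive pairs, the gaps telescope to $\deg f - (\text{lowest index with } a_i\neq 0)=n$, since $a_0\neq 0$. This proves the inequality.

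Putting these together,
\[
n=p+q\le V(f(x))+V(f(-x))\le n,
\]
so every inequality is an equality, and in particular $p=V(f(x))$, which is the statement. Returning to the general case, the positive-root count of $f$ equals that of $g$ and $V(f)=V(g)$, so the result holds as stated, with positive roots counted with multiplicity.

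The only delicate point is the gap-counting inequality, specifically its treatment of zero coefficients (even gaps). I would check that case carefully, but the bound $2\le j-i$ for gaps of length at least $2$ settles it.
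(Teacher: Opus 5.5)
Your proof is correct and takes essentially the same route as the paper. Both arguments bound $V(f(x))+V(f(-x))$ gap by gap: a gap of $1$ contributes exactly $1$, and a larger gap contributes at most $2$. Both then sandwich this bound against Descartes' rule applied to $f(x)$ and $f(-x)$. The only cosmetic difference is that you factor out $x^r$ first, whereas the paper keeps the zero roots and sums the gaps to $k_l$, the number of nonzero roots.
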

\begin{proof}
Define $k_0=0$ and let $f$ be $$f(x)=\sum_{i=0}^{l}a_{n-k_{i}}x^{n-k_i}=a_n x^n+a_{n-k_1}x^{n-k_1}+\cdots+a_{n-k_l}x^{n-k_l},$$ where all coefficients are nonzero and $0=k_0<k_1<k_2<\cdots<k_l \le n$. Then, $f(-x)$ is expressed as 
$$f(-x)=\sum_{i=0}^{l}(-1)^{n-k_i}a_{n-k_{i}}x^{n-k_i}.$$

Suppose $k_i-k_{i-1}=1$ for some $1\leq i\leq l$. Then
$$
a_{n-k_{i-1}}a_{n-k_i}>0\iff (-1)^{n-k_{i-1}}a_{n-k_{i-1}}(-1)^{n-k_i}a_{n-k_i}=-a_{n-k_{i-1}}a_{n-k_i}<0,
$$
so the sum of the number of changes of sign of coefficients of $f(x)$ and $f(-x)$ between the $(n-k_{i-1})$-th order term and the $(n-k_i)$-th order term is 1, which equals $(k_i-k_{i-1})$ in this case.

Now, suppose $k_i-k_{i-1}\geq 2$ for some $1\leq i\leq l$. Then, the sum of the number of changes of sign of coefficients of $f(x)$ and $f(-x)$ between the $(n-k_{i-1})$-th order term and the $(n-k_i)$-th order term is at most $2$, which is less than or equal to $(k_i-k_{i-1})$ in this case. 

Summing up, the sum of the number of changes of sign of coefficients of $f(x)$ and $f(-x)$ is at most $(k_1-k_0)+(k_2-k_1)+\cdots+(k_l-k_{l-1})=k_l$. On the other hand, $f$ has $n$ real roots, where $(n-k_l)$ elements of them are 0. Therefore, $f$ has $k_l$ nonzero real roots. By Descartes' rule of signs, it holds that
\begin{equation}
\begin{split}
    k_l&=(\#\mathrm{\ of\ positive\ real\ roots\ of\ }f)+(\#\mathrm{\ of\ negative\ real\ roots\ of\ }f)\\
    &\leq(\#\mathrm{\ of\ changes\ of\ sign\ of\ coefficients\ of } f(x))\\
    &\quad +(\#\mathrm{\ of\ changes\ of\ sign\ of\ coefficients\ of } f(-x)) \leq k_l
\end{split}    
\end{equation}
In particular, the number of positive real roots of $f$ equals the number of change of sign of coefficients of $f(x)$.
\end{proof}


\begin{thm}
\label{C3}
Let $n\geq 3$ and $A=(a_{ij})_{1\leq i,j\leq n}$ satisfy $a_{ij}=a_{ji}\geq0$ and $a_{ii}=0$ for $1\leq i,j\leq n$. We assume that we have a positive solution $\vec{u}\in\R^n U$ to  a system \eqref{eq3}. Then, if $\det (A)>0$, the Morse index of $\vec{u}$ equals $(n-2-2l)$ for some integer $l\geq 0$. If $\det (A)<0$, the Morse index of $\vec{u}$ equals $(n-3-2l)$ for some integer $l\geq 0$. 
\end{thm}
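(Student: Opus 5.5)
By Theorem \ref{T1}(i), the Morse index of $\vec{u}$ equals the number $p$ of positive eigenvalues of $A$, counted with multiplicity. So the statement is purely about the signature of a symmetric nonnegative matrix with zero diagonal. The plan is to read $p$ off the sign pattern of the coefficients of the characteristic polynomial $f(x)=\det(xI-A)=x^n+s_{n-1}x^{n-1}+\cdots+s_0$ via Lemma \ref{Des}. Since $A$ is symmetric, $f$ has $n$ real roots, so Lemma \ref{Des} says $p$ equals the number of sign changes in the coefficient sequence of $f$, with zero coefficients discarded.

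The coefficients are $s_{n-k}=(-1)^k E_k(A)$, where $E_k$ is the sum of the principal $k\times k$ minors. The leading coefficient is $1>0$. Because $a_{ii}=0$, we have $E_1=\operatorname{tr}A=0$, so the $x^{n-1}$ coefficient vanishes. Every principal $2\times2$ minor equals $-a_{ij}^2\le 0$, so $E_2=-\sum_{i<j}a_{ij}^2$, and the $x^{n-2}$ coefficient is $E_2\le 0$.

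Next we need $E_2<0$. Since $\vec u$ is a positive vector solution, \eqref{Ac=1} holds, so $A\neq 0$ and some off-diagonal $a_{ij}>0$. This makes the $x^{n-2}$ coefficient strictly negative.

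Now count sign changes. In the nonzero coefficient sequence, the first two entries are $+1$ (for $x^n$) and a negative number (for $x^{n-2}$); this is exactly one change. The last nonzero coefficient is the constant term $s_0=(-1)^n\det A$. If $\det A=0$ the constant term vanishes; that case is not covered by the statement, so we assume $\det A\ne0$.

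Sign changes along the sequence are counted by comparing the signs at the two ends. Starting from the negative $x^{n-2}$ coefficient and ending at $s_0$, the number of changes is even if $s_0<0$ and odd if $s_0>0$. Adding the first change from $+$ to $-$ gives
\[p\equiv \begin{cases} 1 \pmod 2, & s_0<0,\\ 0 \pmod 2, & s_0>0.\end{cases}\]

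There is a simpler route to the parity: $\det A$ is the product of the eigenvalues, so $\operatorname{sign}\det A=(-1)^{n-p}$ when $\det A\neq0$. Hence $\det A>0$ gives $p\equiv n \pmod 2$, and $\det A<0$ gives $p\equiv n-1\pmod2$.

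For the upper bound, the constant term and $x^n$ together contribute at most $n$ positions. Positions $x^{n-1}$ and $x^{n-2}$ together contribute only one change. Among the remaining coefficients of $x^{n-2},x^{n-3},\dots,x^0$ there are $n-2$ steps, so at most $n-2$ further changes occur. This gives $p\le 1+(n-2)=n-1$.

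To sharpen this, observe that there is no change between $x^n$ and $x^{n-1}$ (that coefficient is zero) and only one across the gap from $x^n$ to $x^{n-2}$. Hence $p\le n-2+1-1$? I need to recount this carefully. A cleaner bound is $p\le n-1$ together with the fact that the trace is $0$ and $A\neq0$, which forces at least one negative eigenvalue, so again $p\le n-1$.

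To get $p\le n-2$, the approach I will try is the following. The sum of positive eigenvalues equals the sum of absolute values of the negative eigenvalues. If $p=n-1$, then there is exactly one negative eigenvalue $\mu<0$ with $|\mu|=\sum_{\text{pos}}\lambda_i\ge\lambda_1$. But Lemma \ref{M0}, applied to $A$ with Perron eigenvector $(c_1^2,\dots,c_n^2)$, gives $\mu>-\lambda_1$ whenever $\lambda_1$ has a positive eigenvector. So I will use \eqref{Ac=1} to produce such a positive eigenvector. If that fails, I will instead use the Perron–Frobenius property $\lambda_1\ge|\mu|$ for nonnegative $A$, which forces equality and therefore $p=1$; for $n\ge3$ this contradicts $p=n-1$.

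Combining, $p\le n-2$ and $p\equiv n \pmod 2$ when $\det A>0$, so $p=n-2-2l$. When $\det A<0$, we have $p\equiv n-1\pmod2$ with $p\le n-2$, so $p\le n-3$ and $p=n-3-2l$.

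The main obstacle I expect is this last step, excluding $p=n-1$. The parity part is routine once Theorem \ref{T1} is invoked.
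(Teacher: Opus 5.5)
Your proof is correct, but only through the fallback, and it obtains the bound $p\le n-2$ differently from the paper.

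\textbf{The first attempt should be dropped.} The vector $(c_1^2,\dots,c_n^2)$ satisfies $A\vec{\alpha}=\vec{1}$, which is not an eigenvalue equation. So it is not an eigenvector of $A$ unless all $c_i$ coincide, and \eqref{Ac=1} does not give you a positive eigenvector.

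\textbf{The fallback needs no eigenvector and is complete.} Let $x$ be a unit eigenvector for an eigenvalue $\mu$, and set $y=(|x_1|,\dots,|x_n|)$. Then $|\mu|=|\langle Ax,x\rangle|\le\langle Ay,y\rangle\le\lambda_1$, as in \eqref{maes}. Now suppose $p=n-1$. Since $\operatorname{tr}A=0$, there is exactly one negative eigenvalue $\mu=-(\lambda_1+\cdots+\lambda_{n-1})$. Once $n\ge3$ this gives $|\mu|>\lambda_1$, a contradiction. The case $p=n$ is excluded by $\operatorname{tr}A=0$ directly. Combined with the parity read off from $\det A$ as the product of the eigenvalues, this proves the statement. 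Lemma \ref{Des}, the sign-change count and the unfinished recount can all be deleted.

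\textbf{The paper instead stays inside the Descartes framework.} Every $3\times3$ principal minor of a symmetric zero-diagonal matrix equals $2a_{ij}a_{jk}a_{ki}\ge0$. Hence the $\mu^{n-3}$ coefficient of $\det(\mu I-A)$ is $-2\sum_{i<j<k}a_{ij}a_{jk}a_{ki}\le0$, so the step from degree $n-2$ to degree $n-3$ never produces a sign change. There is then exactly one change among degrees $n,\dots,n-3$ and at most $n-3$ afterwards, and Lemma \ref{Des} yields $p\le n-2$. This sign of $E_3$ is precisely the observation your recount was missing. The paper's parity step is the same determinant argument as your ``simpler route''.

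\textbf{What each route buys.} Your version replaces the coefficient bookkeeping with the two facts $\operatorname{tr}A=0$ and $\lambda_1\ge|\mu|$. It is shorter and makes transparent why the bound is $n-2$. The paper's version keeps everything at the level of coefficient signs of the characteristic polynomial, which is the framework it reuses for the explicit $n=4$ corollary.
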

\begin{proof}
This corollary follows from Lemma \ref{Des} and Theorem \ref{T1} after we count the number of changes of sign of coefficients of the characteristic polynomial 
\begin{equation}
\det(\mu I-A)=\mu^n-\sum_{i<j}a_{ij}^2\mu^{n-2}-\sum_{i<j<k}2a_{ij}a_{jk}a_{ki}\mu^{n-3}+\cdots+(-1)^n\det (A).
\end{equation}
There is one change of sign of coefficients between $\mu^n$ and $\mu^{n-3}$ and at most $(n-3)$ changes of sign of coefficients between $\mu^{n-3}$ and the constant term. Therefore, there are total at most $(n-2)$ changes of sign of coefficients.

Let $k \le n-2$ be the number of positive eigenvalues $\{a_1,\cdots, a_k\}$ of $A$ and $\{-a_{k+1},\cdots,-a_n\}$ negative eigenvalues of $A$.
Then, we see that
\begin{equation}
\det(\mu I-A)=(\mu-a_1)\cdots (\mu-a_k)(\mu + a_{k+1})\cdots(\mu+a_n).
\end{equation}
Then, we see that $(-1)^k a_1\cdots a_n = (-1)^n\det (A).$
Thus,  $n-2-k$ is even if $\det (A)>0,$ and $n-2-k$ is odd if $\det (A)< 0$. 
This completes the proof.
\end{proof}

For $n=4,$ we have quite a simple characterization of the Morse index as we see in the following .
\begin{cor}
For $n=4$, let $A=(a_{ij})_{1\leq i,j\leq 4}$ satisfy $a_{ij}=a_{ji}\geq0$ and $a_{ii}=0$ for $1\leq i,j\leq 4$.
We assume that we have a positive vector solution $\vec{u}\in\R^n U$to a  system \eqref{eq3}. 
Then, 
\begin{itemize}
\item[(i)]
the Morse index of $\vec{u}$ is one if and only  if  each $\sqrt{a_{12}a_{34}}$, $\sqrt{a_{13}a_{24}}$, and $\sqrt{a_{14}a_{23}}$ is less than equal to the sum of the others;
\item[(ii)]
 the Morse index of $\vec{u}$ is two if and only if  one of $\sqrt{a_{12}a_{34}}$, $\sqrt{a_{13}a_{24}}$, and $\sqrt{a_{14}a_{23}}$ is strictly larger than the sum of the others.
 \end{itemize}
\end{cor}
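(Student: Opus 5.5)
By Theorem \ref{T1}, the Morse index of $\vec{u}$ equals the number of positive eigenvalues of $A$, counted with multiplicity. So the task reduces to counting positive roots of the characteristic polynomial of a $4\times 4$ symmetric matrix with zero diagonal and nonnegative entries. Since a positive vector solution exists, \eqref{Ac=1} forces $A\neq 0$. Because $\mathrm{tr}(A)=0$ and $A$ is symmetric and nonzero, $A$ has at least one positive eigenvalue, so the Morse index is at least $1$. By Theorem \ref{C3} with $n=4$, the Morse index is at most $n-2=2$, and more precisely:
\begin{itemize}
\item it equals $2-2l$ if $\det(A)>0$, hence equals $2$;
\item it equals $1-2l$ if $\det(A)<0$, hence equals $1$.
\end{itemize}
So everything hinges on the sign of $\det(A)$, plus a separate treatment of $\det(A)=0$.

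The key computation is an explicit formula for $\det(A)$. Set $p=a_{12}a_{34}$, $q=a_{13}a_{24}$, $r=a_{14}a_{23}$. I will expand the determinant of the zero-diagonal symmetric matrix over permutations. Only derangements of $\{1,2,3,4\}$ contribute:
\begin{itemize}
\item the three double transpositions give $p^2+q^2+r^2$;
\item the six $4$-cycles give $-2(pq+qr+rp)$.
\end{itemize}
Hence
\[
\det(A)=p^2+q^2+r^2-2pq-2qr-2rp .
\]
With $x=\sqrt p$, $y=\sqrt q$, $z=\sqrt r$, this is the Heron-type identity
\[
\det(A)=-(x+y+z)(-x+y+z)(x-y+z)(x+y-z).
\]
At most one of the last three factors can be negative, since the sum of any two of them is nonnegative. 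Therefore:
\begin{itemize}
\item if one of $x,y,z$ strictly exceeds the sum of the other two, then exactly one factor is negative and the rest are positive, so $\det(A)>0$ and the index is $2$;
\item if each of $x,y,z$ is at most the sum of the others, then $\det(A)\le 0$.
\end{itemize}

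It remains to show that $\det(A)=0$ also gives index $1$. Here I use the characteristic polynomial
\[
\det(\mu I-A)=\mu^4-s\mu^2-2t\mu+\det(A)=\mu\,(\mu^3-s\mu-2t),
\]
where $s=\sum_{i<j}a_{ij}^2>0$ and $t=\sum_{i<j<k}a_{ij}a_{jk}a_{ki}\ge 0$. The cubic has only real roots (by symmetry of $A$), their sum is $0$, and their product is $2t$.
\begin{itemize}
\item If $t>0$, the three roots are nonzero with positive product and zero sum, so exactly one is positive and two are negative.
\item If $t=0$, the roots are $0$ and $\pm\sqrt s$.
\end{itemize}
In either case exactly one eigenvalue of $A$ is positive, so the index is $1$.

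Combining the cases proves (i) and (ii): the triangle-type inequalities among $\sqrt{a_{12}a_{34}}$, $\sqrt{a_{13}a_{24}}$, $\sqrt{a_{14}a_{23}}$ hold if and only if $\det(A)\le 0$, which happens if and only if the index is $1$; their strict failure holds if and only if $\det(A)>0$, which happens if and only if the index is $2$. The main obstacle is the determinant expansion and recognizing its Heron factorization, specifically tracking the signs of the $4$-cycle terms. I would double-check it on the all-ones off-diagonal example, where $p=q=r=1$ gives $\det(A)=-3$.
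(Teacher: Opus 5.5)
Your proof is correct and follows essentially the same route as the paper. Theorem~\ref{T1} reduces the Morse index to the number of positive eigenvalues of $A$, the sign of $\det(A)$ decides between index $1$ and $2$ via Theorem~\ref{C3} and Lemma~\ref{Des}, and your Heron-type factorization is just a rewriting of the paper's identity $\det(A)=\bigl(a_{12}a_{34}-(\sqrt{a_{13}a_{24}}-\sqrt{a_{14}a_{23}})^2\bigr)\bigl(a_{12}a_{34}-(\sqrt{a_{13}a_{24}}+\sqrt{a_{14}a_{23}})^2\bigr)$; the only cosmetic difference is that you settle $\det(A)=0$ by Vieta on the cubic factor $\mu^3-s\mu-2t$, whereas the paper reads it off from the single sign change in $1,-s,-2t$ via Lemma~\ref{Des}.
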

\begin{proof} 
Since the largest eigenvalue of $A$ is positive, we see from Theorem \ref{C3} that
the Morse index of $\vec{u}$ is one or two.
We note that
\brr  & &  \det(\lambda I-A)  \\
& & =   \lambda^4  -\lambda^2(a_{12}^2+a_{13}^2+a_{14}^2+a_{23}^2 + a_{24}^2 + a_{34}^2) \\
& & \quad - 2\lambda(a_{12}a_{13}a_{23} +a_{12}a_{14}a_{24} + a_{13}a_{14}a_{34} +a_{23}a_{24}a_{34})  \\
& & \quad + \left(a_{12}a_{34}-\left(\sqrt{a_{13}a_{24}}-\sqrt{a_{14}a_{23}}\right)^2\right) \cdot \left(a_{12}a_{34}-\left(\sqrt{a_{13}a_{24}}+\sqrt{a_{14}a_{23}}\right)^2\right).\err
Theorem \ref{T1} and Lemma \ref{Des}  imply that the Morse index of $\vec{u}$ is one
iff $\det (A) \le 0$ and two iff $\det (A) > 0.$ Since the determimnant of $A$ is expressed as 
$$\det (A)=\left(a_{12}a_{34}-\left(\sqrt{a_{13}a_{24}}-\sqrt{a_{14}a_{23}}\right)^2\right) \cdot \left(a_{12}a_{34}-\left(\sqrt{a_{13}a_{24}}+\sqrt{a_{14}a_{23}}\right)^2\right),$$
the conclusion follows.
\end{proof}

We now prove that for some matrix $A=(a_{ij})_{1\leq i,j\leq n}$ such that $\det (A)>0$, $a_{ij}=a_{ji}\geq0$ and $a_{ii}=0$ for $1\leq i,j\leq n$, there exists a vector solution $\vec{u}\in\R^nU$ to \eqref{eq3} with Morse index $(n-2)$. To prove it, we need the following lemma.

\begin{lemma}
\label{L1}
Let $n\geq 3$. Also let $A=(a_{ij})_{1\leq i,j\leq n}$ satisfy $a_{ij}>0$ for $1\leq i<j\leq n$ and $a_{ii}=0$ for $1\leq i\leq n$. 
Then, there exists a vector $\vec{k}_n\in\mathbb{R}^n$ satisfying $A\vec{k}_n>\vec{0}\equiv(0,0,\cdots,0)$ componentwise and 
$\vec{k}_n\cdot A\vec{k}_n<0$.
\end{lemma}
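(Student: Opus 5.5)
The plan is to reduce the lemma to a statement about the $(n-1)\times(n-1)$ principal block, settle $n=3$ by an explicit vector, and attack $n\ge 4$ by induction on that block.

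\emph{Reading of the hypotheses.} The paper's standing convention is that $A$ is symmetric, so I take $a_{ji}=a_{ij}$. If the entries below the diagonal were unconstrained, the lemma could fail: with $a_{nj}=0$ for all $j<n$, the last row of $A$ is zero and $(A\vec{k})_n>0$ is impossible. Under symmetry, "$a_{ij}>0$ for $i<j$" means every off-diagonal entry is positive. I will nevertheless write all conditions in terms of the entries $a_{ij}$ with $i<j$.

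\emph{Reduction to one negative coordinate.} I look for $\vec{k}=(p_1,\dots,p_{n-1},-1)$ with all $p_i>0$. Write $A'=(a_{ij})_{1\le i,j\le n-1}$ and $\vec{a}=(a_{1n},\dots,a_{n-1,n})$. The $n$-th component $(A\vec{k})_n=\sum_{j<n}a_{jn}p_j$ is automatically positive, since $a_{nn}=0$. The remaining requirements are
\[
A'\vec{p}>\vec{a}\ \text{componentwise},\qquad \vec{k}\cdot A\vec{k}=\vec{p}\cdot A'\vec{p}-2\,\vec{a}\cdot\vec{p}<0 .
\]
Now put $\vec{p}=t\vec{r}$ with $\vec{r}>0$ and $\vec{w}=A'\vec{r}>0$. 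A suitable $t>0$ exists if and only if
\[
\max_i\frac{a_{in}}{w_i}\;<\;2\,\frac{\vec{a}\cdot\vec{r}}{\vec{w}\cdot\vec{r}}.
\]
The right-hand side is twice a weighted average of the ratios $a_{in}/w_i$, with weights $w_ir_i$. So it suffices to find $\vec{r}>0$ for which all ratios $a_{in}/(A'\vec{r})_i$ lie within a factor $2$ of each other. In particular, $A'\vec{r}=\vec{a}$ with $\vec{r}>0$ would do.

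\emph{Base case $n=3$.} Here $A'\vec{r}=(a_{12}r_2,a_{12}r_1)$, so $\vec{r}=(a_{23}/a_{12},\,a_{13}/a_{12})$ solves $A'\vec{r}=\vec{a}$ exactly. Concretely, the vector $\vec{k}=(a_{23}/a_{12}+\eta,\ a_{13}/a_{12}+\eta,\ -1)$ satisfies $A\vec{k}>0$. A direct computation gives $\vec{k}\cdot A\vec{k}=2(a_{12}\eta^2-a_{13}a_{23}/a_{12})$, which is negative for small $\eta>0$.

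\emph{General $n$: the main obstacle.} The difficulty is making the ratios $a_{in}/(A'\vec{r})_i$ nearly equal once $n-1\ge 3$. For a zero-diagonal positive block, $A'\vec{r}$ cannot approximate an arbitrary positive vector. Nor does the naive extension of the $n=3$ vector (small $\delta>0$ in the new coordinates) work, because the new rows need $a_{l1}p_1+a_{l2}p_2>a_{l3}$, which can fail. My first attempt is an induction on $n$ with the strengthened statement: for every zero-diagonal symmetric $A'$ with positive off-diagonal entries and every $\vec{a}>0$, there is $\vec{p}>0$ with $A'\vec{p}>\vec{a}$ and $\vec{p}\cdot A'\vec{p}<2\,\vec{a}\cdot\vec{p}$. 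The case of a $2\times 2$ block is exactly the computation above. In the inductive step I would add a coordinate with a suitably chosen value, possibly of size comparable to the others rather than small, and re-balance the ratio condition.

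If that fails, the fallback uses the freedom to choose which index carries the $-1$, together with a rescaling of coordinates, to make $\vec{a}$ lie in the interior of the cone $A'(\mathbb{R}^{n-1}_{+})$. At that point $\vec{p}=A'^{-1}(\vec{a}+\eta\vec{1})$ works for small $\eta>0$, since then $\vec{p}\cdot A'\vec{p}=\vec{a}\cdot\vec{p}+\eta\,\vec{p}\cdot\vec{1}<2\,\vec{a}\cdot\vec{p}$.
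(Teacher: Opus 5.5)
Your reduction to $\vec{k}=(\vec{p},-1)$, the criterion $\max_i a_{in}/w_i<2\,\vec{a}\cdot\vec{r}/(\vec{w}\cdot\vec{r})$, and the case $n=3$ are correct. For $n\ge 4$, however, there is no proof: the inductive step is only announced (``add a coordinate \dots and re-balance''), and the fallback cannot work.

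A positive diagonal rescaling $\vec{k}=S\vec{m}$ turns $A$ into $SAS$, that is, $(A',\vec{a})$ into $(S'A'S',s_nS'\vec{a})$. Since $S'A'S'(\R^{n-1}_+)=S'A'(\R^{n-1}_+)$, the vector $s_nS'\vec{a}$ lies in its interior if and only if $\vec{a}$ lies in the interior of $A'(\R^{n-1}_+)$. So rescaling changes nothing.

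Choosing the negative index does not rescue the fallback either. Take $n=4$, $a_{14}=10$, and all other off-diagonal entries equal to $1$. When index $4$, $1$, $2$, $3$ respectively carries the $-1$, the nonsingular systems $A'\vec{r}=\vec{a}$ have solutions $(-4,5,5)$, $(5,5,-4)$, $(1/2,-4,1/2)$, $(1/2,-4,1/2)$. So $\vec{a}$ is never in the cone. The underlying problem is that asking for $A'\vec{r}\approx\vec{a}$, or for ratios within a factor $2$ of each other, demands far more than your own criterion requires.

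The missing idea is to use a very lopsided vector. Suppose $\vec{r}$ is supported on two indices $i\neq j$ of $A'$. Then the only nonzero weights are $w_ir_i=w_jr_j=a_{ij}r_ir_j$. With $\rho_l=a_{ln}/w_l$, the right-hand side of your criterion is $\rho_i+\rho_j$, which exceeds both $\rho_i$ and $\rho_j$. For every other $l$, $\rho_l\le a_{ln}/(a_{li}r_i)<a_{in}/(a_{ij}r_j)=\rho_i$ once $r_j/r_i$ is small enough. Zero entries of $\vec{p}$ are harmless, since $\vec{p}\ge 0$, $\vec{p}\neq 0$ already makes the last row positive.

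Concretely, your $n=3$ constraints read $a_{12}p_2>a_{13}$, $a_{12}p_1>a_{23}$, and $p_1(a_{12}p_2-a_{13})<a_{23}p_2$. So you may take $p_1$ as large as you like, provided $a_{12}p_2-a_{13}>0$ is taken correspondingly small. Then $(p_1,p_2,-1,0,\dots,0)$ makes every extra row $a_{l1}p_1+a_{l2}p_2-a_{l3}$ positive. Your naive extension failed only because your particular $n=3$ vector stayed bounded.

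After swapping indices $2,3$ and rescaling, this is exactly the paper's proof. The paper takes $\vec{k}_n=(1,-\e_1,\e_2,0,\dots,0)$, with $\e_1$ so small that $a_{1l}-\e_1a_{2l}>0$ for $l\ge 3$, $\e=\e_1^2a_{12}a_{23}/a_{13}$, and $\e_2a_{13}=\e_1a_{12}+\e$. This gives $(A\vec{k}_n)_1=\e>0$, all other rows positive, and $\vec{k}_n\cdot A\vec{k}_n=-2\e_1\e a_{23}/a_{13}<0$. No induction is needed. The strengthened statement you wanted to prove inductively is just the lemma with a prescribed negative index, and the two-index choice proves it directly.
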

\begin{proof}
Fix $\e_1>0$ small satisfying $a_{13}-\e_1a_{23},a_{14}-\e_1a_{24},\cdots,a_{1n}-\e_1a_{2n}>0$. Then, we define 
\[\e=\e_1^2 a_{12}a_{23}/a_{13}>0 \ \ \text{ and } \ \ \e_2=(\e_1 a_{12}+\e)/a_{13}>0.\]
 Let $\vec{k}_n=(1,-\e_1,\e_2,0,0,\cdots,0).$
We then have
\begin{equation}
A\vec{k}_n
=
\begin{pmatrix}
    -\e_1 a_{12}+\e_2 a_{13}\\
    a_{12}+\e_2 a_{23}\\
    a_{13}-\e_1 a_{23}\\
    a_{14}-\e_1 a_{24}+\e_2 a_{34}\\
    \cdots\\
    a_{1n}-\e_1 a_{2n}+\e_2 a_{3n}
\end{pmatrix}.
\end{equation}
By the definition of $\e_1,$ this vector is positive componentwise since \[-\e_1 a_{12}+\e_2 a_{13}=-\e_1 a_{12}+(\e_1 a_{12}+\e)=\e>0.\] Furthermore, we have
\begin{equation}
\begin{split}
\vec{k}_n\cdot A\vec{k}_n&=-2\e_1 a_{12}+2\e_2 a_{13}-2\e_1 \e_2 a_{23}\\
&=-2\e_1 a_{12}+2(\e_1 a_{12}+\e)-\frac{2\e_1 (\e_1 a_{12}+\e) a_{23}}{a_{13}}\\
&=2\e-\frac{2\e_1^2 a_{12}a_{23}}{a_{13}}-\frac{2\e_1\e a_{23}}{a_{13}}\\
&=-\frac{2\e_1\e a_{23}}{a_{13}}<0,
\end{split}
\end{equation}
so $\vec{k}_n$ is the desired vector.
\end{proof}

\begin{thm}
\label{MM3}
Let $n\geq 3$. Then, there exists a matrix $A=(a_{ij})_{1\leq i,j\leq n}$ such that $\det (A)>0$, $a_{ij}=a_{ji}>0$ for $1\leq i<j\leq n$ and $a_{ii}=0$ for $1\leq i\leq n$, for which a system \eqref{eq3} admits a positive vector solution of the form $\vec{u}=(c_1,c_2,\cdots,c_n)U$ of Morse index $(n-2)$.
\end{thm}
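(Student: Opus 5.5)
The plan is an induction on $n$ that builds $A$ by adding one row and column at a time. Each new border is designed with Lemma \ref{L1} so that the number of positive eigenvalues increases by exactly one at every step. The Morse index is then read off from Theorem \ref{T1}(i) as the number of positive eigenvalues of $A$. So it suffices to produce, for each $n\ge 3$, a symmetric $A_n$ with zero diagonal, positive off-diagonal entries, $\det(A_n)>0$, exactly $n-2$ positive eigenvalues, and a vector $\vec x^{(n)}>0$ with $A_n\vec x^{(n)}=\vec 1$; then $c_i=(x^{(n)}_i)^{1/2}$.

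\emph{Base case $n=3$.} Take $A_3=J-I$, where $J$ is the all-ones matrix. Then $\det(A_3)=2a_{12}a_{13}a_{23}=2>0$, the eigenvalues are $2,-1,-1$ (exactly one positive, and $1=n-2$), and $\vec x=\frac12\vec 1$ solves $A_3\vec x=\vec 1$.

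\emph{Inductive step.} Given $A=A_n$ with the properties above, apply Lemma \ref{L1} to $A$; its hypotheses hold because the off-diagonal entries are positive. This gives $\vec k$ with $\vec b:=A\vec k>0$ componentwise and $q:=\vec k\cdot A\vec k<0$. From the explicit form $\vec k=(1,-\e_1,\e_2,0,\dots,0)$ in that proof, with $\e_1,\e_2$ small, we may also assume $\sigma:=\sum_i k_i>0$. For $t>0$ define
\[
A_{n+1}=\begin{pmatrix} A & t\vec b\\ t\vec b^{T} & 0\end{pmatrix}.
\]
Its off-diagonal entries are positive and its diagonal is zero. Since $A$ is invertible, Haynsworth inertia additivity applies: the inertia of $A_{n+1}$ is the inertia of $A$ plus that of the Schur complement $-t^2\vec b^TA^{-1}\vec b=-t^2\,\vec k\cdot A\vec k=-t^2q>0$. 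Hence $A_{n+1}$ has $(n-2)+1=(n+1)-2$ positive eigenvalues. Moreover $\det(A_{n+1})=-t^2q\det(A)>0$.

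\emph{Positive solvability, which I expect to be the delicate step.} We need a positive solution of $A_{n+1}\vec y=\vec 1$ of the form $\vec y=(\vec x,s)$ with $s>0$. Try $\vec x=\vec x^{(n)}-st\vec k$. Then $A\vec x+st\vec b=\vec 1$ holds automatically, and the last equation $t\vec b\cdot\vec x=1$ becomes
\[
t\sigma+s t^2|q|=1,
\]
using $\vec b\cdot\vec x^{(n)}=\vec k\cdot\vec 1=\sigma$ and $\vec b\cdot\vec k=q$. Fix $s>0$ small and let $t$ be the positive root of this quadratic. As $s\to0$, $t\to 1/\sigma$, which is exactly why we need $\sigma>0$. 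Then $st\to0$, so $\vec x=\vec x^{(n)}-st\vec k>0$ for $s$ small enough. This gives $\vec y>0$ and closes the induction.

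Finally, Theorem \ref{T1}(i) gives Morse index $n-2$, consistent with Theorem \ref{C3} since $\det(A_n)>0$. The main point to verify carefully is the pair of side conditions on $\vec k$ ($\sigma>0$ together with Lemma \ref{L1}'s conclusions), which the explicit construction in that lemma supplies.
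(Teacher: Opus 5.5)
Your proof is correct, and the construction is the same as the paper's. The paper fixes $\e$ with $\vec d_n=\vec c_n-\e\vec k_n>0$, borders $A_n$ by $\vec\alpha_n=A_n\vec k_n/\big((A_n\vec k_n)\cdot\vec d_n\big)$, and sets $d_{n+1}^2=\e\,(A_n\vec k_n)\cdot\vec d_n$. In your notation this is $t=1/(\vec b\cdot\vec d_n)$ and $s=d_{n+1}^2$, with $\e=st$.

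Fixing $\e=st$ first, rather than $s$, makes positive solvability immediate: $\vec b\cdot\vec d_n>0$ because both vectors are componentwise positive, so no quadratic is needed. Likewise, your condition $\sigma>0$ does not require the explicit form of $\vec k$ from Lemma \ref{L1}. It holds automatically, since $\sigma=\vec k\cdot A\vec x^{(n)}=\vec b\cdot\vec x^{(n)}>0$, which you essentially computed yourself.

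The genuine difference is how you count positive eigenvalues.
\begin{itemize}
\item The paper first shows the new Morse index is at least $n-2$. It embeds the old negative space as $(\vec\phi,0)$ and uses $d_i\approx c_i$ for small $\e$. It then invokes the parity statement of Theorem \ref{C3}, which comes from Descartes' rule of signs and depends on the zero diagonal, together with $\det(A_{n+1})>0$, to force the index to be exactly $n-1$. The determinant sign is obtained from the same Schur-complement (cofactor) identity you use.
\item Your use of Haynsworth inertia additivity gives the exact inertia and the sign of the determinant in one step. It holds for every $t>0$, so smallness is needed only to keep $\vec x$ positive. It uses neither the zero diagonal nor Theorem \ref{C3}, so it is more self-contained and would apply to other bordering constructions.
\end{itemize}
The paper's route instead reuses its own Descartes-based machinery and needs only a soft perturbative lower bound.
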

\begin{proof}
We prove this theorem by induction. Assume that there exists a matrix $A_n=(a_{ij})_{1\leq i,j\leq n}$ such that $\det (A_n)>0$, $a_{ij}=a_{ji}>0$ for $1\leq i<j\leq n$ and $a_{ii}=0$ for $1\leq i\leq n$, which a system \eqref{eq3} with the interaction matrix $A=A_n$ admits a positive vector solution of the form $(c_1,c_2,\cdots,c_n)U$ of which Morse index is $(n-2)$. For $n=3$, we can take
\begin{equation}
A_3=
\begin{pmatrix}
0&1&1\\
1&0&1\\
1&1&0
\end{pmatrix}.
\end{equation}
We aim to construct a matrix $A_{n+1}=(a_{ij})_{1\leq i,j\leq n+1}$ such that $\det (A_{n+1})>0$, $a_{ij}=a_{ji}>0$ for $1\leq i<j\leq n+1$ and $a_{ii}=0$ for $1\leq i\leq n+1$, for which a system \eqref{eq3} with interaction matrix $A=A_{n+1}$ admits a positive vector solution of the form $(d_1,d_2,\cdots,d_{n+1})U$ of which Morse index is $(n-1)$.

First, by Lemma \ref{L1}, there exists a vector $\vec{k}_n=(k_1,k_2,\cdots,k_n)\in\mathbb{R}^n$ satisfying $A_n\vec{k}_n>\vec{0}$ componentwise and $\vec{k}_n\cdot A_n\vec{k}_n<0$. Define $\vec{c}_n=(c_1^2,c_2^2,\cdots,c_n^2)$. We take a small $\e>0$ so that $c_i^2-\e k_i>0$ for $1\leq i\leq n$. Then let $d_i=(c_i^2-\e k_i)^{1/2}$ for $1\leq i\leq n$ and define $\vec{d}_n=(d_1^2,d_2^2,\cdots,d_n^2)=\vec{c}_n-\e\vec{k}_n$. Then, $(\e A_n\vec{k}_n)\cdot \vec{d}_n>0$ holds.
Now, we define
\begin{equation}
\vec{\alpha}_n=\frac{\e A_n\vec{k}_n}{(\e A_n\vec{k}_n)\cdot \vec{d}_n}\quad\mathrm{and}\quad d_{n+1}=((\e A_n\vec{k}_n)\cdot \vec{d}_n)^{1/2}.
\end{equation}
Note that $\vec{\alpha}_n>\vec{0}$ componentwise and $d_1,d_2,\cdots,d_{n+1}>0$. Define a $(n+1)\times(n+1)$ matrix $A_{n+1}$ by
\begin{equation}
A_{n+1}=
\begin{pmatrix}
A_n&\vec{\alpha}_n\\
\vec{\alpha}_n^T&0
\end{pmatrix}.
\end{equation}
Then, $A_{n+1}=(a_{ij})_{1\leq i,j\leq n+1}$ satisfies $a_{ij}=a_{ji}>0$ for $1\leq i<j\leq n+1$ and $a_{ii}=0$ for $1\leq i\leq n+1$.
We see that \eqref{Ac=1} holds for $A_{n+1}$ and $(\vec{d}_n,d_{n+1})$ as
\begin{equation}
\begin{split}
A_{n+1}
\begin{pmatrix}
d_1^2\\
d_2^2\\
\cdots\\
d_{n+1}^2
\end{pmatrix}
&=
\begin{pmatrix}
A_n&\vec{\alpha}_n\\
\vec{\alpha}_n^T&0
\end{pmatrix}
\begin{pmatrix}
\vec{d}_n\\
d_{n+1}^2
\end{pmatrix}
=
\begin{pmatrix}
A_n\vec{d}_n+d_{n+1}^2\vec{\alpha}_n\\
\vec{\alpha}_n\cdot\vec{d}_n
\end{pmatrix}\\
&=
\begin{pmatrix}
A_n\vec{c}_n-\e A_n\vec{k}_n+((\e A_n\vec{k}_n)\cdot\vec{d}_n)\vec{\alpha}_n\\
1
\end{pmatrix}
=
\begin{pmatrix}
A_n\vec{c}_n\\
1
\end{pmatrix}
=\begin{pmatrix}
1\\
1\\
\vdots\\
1
\end{pmatrix}.
\end{split}
\end{equation}
Therefore, $(d_1,d_2,\cdots,d_{n+1})U$ is indeed a positive vector solution for a system \eqref{eq3} with the interaction matrix $A=A_{n+1}$.

It now remains to show that the Morse index of $(d_1,d_2,\cdots,d_{n+1})U$ is $(n-1)$ and that $\det(A_{n+1})>0$. By Corollary \ref{C3}, it suffices to show that the Morse index of $(d_1,d_2,\cdots,d_{n+1})U$ is at least  $(n-2)$ and that $\det (A_{n+1})>0$. We prove the former one first. Since the Morse index of $(c_1,c_2,\cdots, c_n)U$ is $(n-2)$, we can take an orthogonal set $\{\vec{\phi}^1,\vec{\phi}^2,\cdots,\vec{\phi}^{n-2}\}\subset H^n$ which spans a subspace $W\subset H^n$ satisfying
\begin{equation}
I''((c_1,c_2,\cdots,c_n)U)(\vec{\phi},\vec{\phi})<0\quad\mathrm{for}\quad\vec{\phi}\in W\setminus\{0\}.
\end{equation}
Note that
\begin{equation}
\sup_{\substack{\lVert \vec{\phi}\rVert_{\Omega}^2=1\\ \vec{\phi}\in W}}I''((c_1,c_2,\cdots,c_n)U)(\vec{\phi},\vec{\phi})<0.
\end{equation}
We now fix $\vec{\phi}=(\phi_1,\phi_2,\cdots,\phi_n)\in\mathrm{span}\{\vec{\phi}^1,\vec{\phi}^2,\cdots,\vec{\phi}^{n-2}\}$ satisfying $\lVert\vec{\phi}\rVert_{\Omega}^2=1$. Since $|d_i^2-c_i^2|=\e|k_i|$ and
\begin{equation}
|d_{i}-c_{i}|=|(c_i^2-\e k_i)^{1/2}-c_i|=\frac{\e|k_i|}{|(c_i^2-\e k_i)^{1/2}+c_i|}\leq \frac{\e|k_i|}{|c_i|}
\end{equation}
holds, it follows that
\begin{equation}
\begin{split}
&I''((d_1,d_2,\cdots,d_{n+1})U)((\vec{\phi},0),(\vec{\phi},0))\\
&=\lVert(\vec{\phi},0)\rVert_{\Omega}^2-\int_{\Omega}U^2\sum_{i,j=1}^{n+1} a_{ij}(d_j^2\phi_i^2+2d_id_j\phi_i\phi_j)dx\\
&=\lVert\vec{\phi}\rVert_{\Omega}^2-\int_{\Omega}U^2\left(\sum_{i,j=1}^n a_{ij}(d_j^2\phi_i^2+2d_id_j\phi_i\phi_j)+\sum_{i=1}^n a_{i(n+1)}\cdot d_{n+1}^2\phi_i^2\right)dx\\
&<\lVert\vec{\phi}\rVert_{\Omega}^2-\int_{\Omega}U^2\sum_{i,j=1}^n a_{ij}(c_j^2\phi_i^2+2c_ic_j\phi_i\phi_j)dx+C\e \\
&=I''((c_1,c_2,\cdots,c_n)U)(\vec{\phi},\vec{\phi})+C\e,
\end{split}
\end{equation}
for some constant $C$ independent of $\vec{\phi}$ with $\lVert\vec{\phi}\rVert^2_{\Omega}=1$ and small enough $\e>0$. Therefore, for any small $\e>0$, it holds that
\begin{equation}
I''((d_1,d_2,\cdots,d_{n+1})U)((\vec{\phi},0),(\vec{\phi},0))<0\quad\mathrm{for}\quad\vec{\phi}\in W.
\end{equation}
Thus, the Morse index of $(d_1,d_2,\cdots,d_{n+1})U$ is at least $(n-2)$.

Finally, we show that $\det (A_{n+1})>0$. We define the $(i,j)$ cofactor of $A_n$ by $C_{ij}$. Then, we see that
\begin{equation}
\begin{split}
\det (A_{n+1})&=\det
\begin{pmatrix}
    A_n&\vec{\alpha}_n\\
    \vec{\alpha}_n^T&0
\end{pmatrix}
=\sum_{1\leq i,j\leq n}\alpha_i\alpha_j(-C_{ij})\\
&=\vec{\alpha}_n\cdot(-\det (A_n)\cdot A_n^{-1})\vec{\alpha}_n>0,
\end{split}
\end{equation}
since $\vec{\alpha}_n$ is parallel to $A_n\vec{k}_n$, which in turn satisfies $(A_n\vec{k}_n)\cdot (A_n^{-1}A_n\vec{k}_n)=\vec{k}_n\cdot A_n\vec{k}_n<0$.
This completes the proof.
\end{proof}

For any $1\leq m\leq n-2$, we can find a matrix $A=(a_{ij})_{1\leq i,j\leq n}$ satisfying $a_{ij}=a_{ji}\geq 0$ and $a_{ii}=0$ for $1\leq i,j\leq n$, such that there exists a corresponding positive vector solution $\vec{v}\in\R^nU$ of which the Morse index equals $m$. We omit the details of the proof, but the proof is essentially the same as the proof of Theorem \ref{T2}. The only difference is that we use the matrices $A_n$ constructed in Theorem \ref{MM3} instead of the identity matrices.

\begin{thm}
Let $1\leq m\leq n-2$. Then, there exists a matrix $A=(a_{ij})_{1\leq i,j\leq n}$ satisfying $a_{ij}=a_{ji}\geq 0$ and $a_{ii}=0$ for $1\leq i,j\leq n$ such that $\det (A)\neq 0$ and there exists a positive vector solution $\vec{u}\in\R^nU$ to a system \eqref{eq3} which the Morse index of $\vec{u}$ equals $m$.  
\end{thm}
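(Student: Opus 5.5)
We mimic the block construction in the proof of Theorem \ref{T2}, replacing identity blocks by the zero-diagonal matrices $A_k$ of Theorem \ref{MM3}. By Theorem \ref{T1}(i), the Morse index of a synchronized positive vector solution $\vec{c}U$ equals the number of positive eigenvalues of $A$. So it suffices to construct a symmetric $A$ with $a_{ij}\ge 0$, $a_{ii}=0$, $\det(A)\neq 0$, exactly $m$ positive eigenvalues, and a positive solution $\vec{c}$ of \eqref{Ac=1}.

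The building blocks are as follows. For $k\ge 3$, Theorem \ref{MM3} gives $A_k$ with zero diagonal, positive off-diagonal entries, $\det(A_k)>0$ (hence invertible), exactly $k-2$ positive eigenvalues, and a positive solution of $A_k\vec{x}=\vec{1}$. For $k=2$ we use $J_2=\begin{pmatrix}0&1\\1&0\end{pmatrix}$: it has eigenvalues $\pm1$, is invertible, and satisfies $J_2(1,1)^T=(1,1)^T$. Thus $J_2$ contributes exactly one positive eigenvalue.

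Step 1 is the choice of blocks. Write $n=\sum_{r=1}^p k_r$ with each $k_r\ge 2$, and let $A=\mathrm{diag}(B_1,\dots,B_p)$, where $B_r=A_{k_r}$ if $k_r\ge3$ and $B_r=J_2$ if $k_r=2$. The number of positive eigenvalues is $\sum_r \pi(k_r)$, where $\pi(k)=k-2$ for $k\ge3$ and $\pi(2)=1$.
\begin{itemize}
\item If $m=n-2$, take a single block $A_n$ directly.
\item For $m<n-2$, take $p$ blocks with $m=\sum_r\pi(k_r)$. The relation $n-m=\sum_r(k_r-\pi(k_r))$ means each $A_k$ block costs $2$ and each $J_2$ block costs $1$. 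So we use $q$ blocks $J_2$ and $s$ blocks $A_{k}$ with $q+2s=n-m$, $q+\sum(k-2)=m$, and $2q+\sum k=n$.
\item A concrete choice: if $n-m\ge 2$ and $m\ge 1$, take $q=n-m-2$ copies of $J_2$ and one block $A_k$ with $k=n-2q=2m-n+4$. This requires $k\ge 3$, i.e.\ $2m\ge n-1$.
\item Otherwise, when $m$ is small relative to $n$, use more $J_2$ blocks together with the trivially available block $A_3$ (cost $2$, one positive eigenvalue). A pure $J_2$ decomposition gives $m=n/2$.
\end{itemize}

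The hard part is hitting small $m$ when $n$ is large, since every block contributes at least one positive eigenvalue per at most two coordinates. Zero-diagonal block sums with $k_r\ge 2$ therefore seem unable to give $m<\lceil n/3\rceil$ roughly, since an $A_3$ block gives a ratio of $1/3$. I would first try to get small $m$ from connected blocks such as $A_k$ from Theorem \ref{MM3} itself and its induction; but those give $k-2$. The honest route is therefore a direct generalization of the proof of Theorem \ref{MM3}: run the inductive step of adding a row $\vec{\alpha}_n$ while choosing $\vec{k}_n$ so that the Morse index does not increase, i.e.\ so that $\det(A_{n+1})$ has the sign forcing the parity in Theorem \ref{C3}. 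Concretely, take $\vec{k}_n$ with $A_n\vec{k}_n>0$ and $\vec{k}_n\cdot A_n\vec{k}_n>0$ (e.g.\ $\vec{k}_n$ close to $\vec{c}_n$). Then $\det(A_{n+1})=-\det(A_n)\,\vec{\alpha}\cdot A_n^{-1}\vec{\alpha}$ has the opposite sign to $\det(A_n)$. By interlacing, adding one row and column changes the positive count by $0$ or $1$, and the sign of the determinant decides which. Starting from $A_m+2$ or the block $J_2$ and applying this step repeatedly keeps the positive count fixed while raising the dimension to $n$.

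Step 2 is verification. The positive solution of \eqref{Ac=1} is either the concatenation of the block solutions or the one produced at each step as in Theorem \ref{MM3}. Nonsingularity holds because the determinant is nonzero at each step. Theorem \ref{T1}(i) then gives Morse index $m$.
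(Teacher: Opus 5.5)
Your bordering argument is correct, but it is not the paper's route, and your reason for abandoning block sums is mistaken. The paper, which omits details, reruns the proof of Theorem \ref{T2} with blocks from Theorem \ref{MM3} in place of the identity block. The other block there, $1_{p\times p}-I_p$, already has zero diagonal and eigenvalues $p-1$ (simple) and $-1$, and it satisfies $(1_{p\times p}-I_p)\frac{1}{p-1}\vec{1}=\vec{1}$, for \emph{every} $p\ge 2$. So it contributes exactly one positive eigenvalue whatever its size. This gives, for example:
\begin{itemize}
\item $1_{n\times n}-I_n$ for $m=1$;
\item $A_{m+1}\oplus(1_{(n-m-1)\times(n-m-1)}-I_{n-m-1})$ for $2\le m\le n-3$;
\item $A_n$ for $m=n-2$.
\end{itemize}
Your claim that zero-diagonal block sums cannot reach $m<\lceil n/3\rceil$ is therefore false. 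You used only the members $J_2$ and $A_3$ of this family.

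Your alternative is to start from $A_{m+2}$ (or $J_2$) and border $n-m-2$ times as in Theorem \ref{MM3}, but with $\vec{k}_n\cdot A_n\vec{k}_n>0$. This works. Taking $\vec{k}_n=\vec{c}_n$ gives $A_n\vec{k}_n=\vec{1}$, so the new row is a positive constant vector. The construction yields a positive solution of \eqref{Ac=1} for any $\e\in(0,1)$. Since $\vec{\alpha}\cdot A_n^{-1}\vec{\alpha}>0$, the Schur complement is negative and the determinant flips sign. Interlacing plus parity (equivalently, Haynsworth inertia additivity) then keeps the positive count at $m$, and Theorem \ref{T1}(i) finishes.

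The block route is more explicit and its eigenvalue count is immediate, but it produces reducible matrices with zero off-diagonal blocks. Yours gives all off-diagonal entries strictly positive, i.e.\ a fully coupled system, which is slightly stronger than the statement requires. The two constructions are closely related: bordering $1_{p\times p}-I_p$ with $\vec{k}_n=\vec{c}_n$ and $\e=1/p$ returns exactly $1_{(p+1)\times(p+1)}-I_{p+1}$.
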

\begin{remark}
The upper bound $n-2$ depends strongly on the vaishing property of $a_{ii}, i=1,\cdots, n,$ not on the other entries $a_{ij}, i\ne j.$
For $n=4,$ we consider a $4\times4$ matrix $A$ given by
\[A\equiv  \begin{pmatrix}  
0& 0 &1 &0 \\
 0&0 & 0& 1\\
  1&0 & 0&0 \\
  0&1 & 0& 0\\
\end{pmatrix}
\]
Then, for a least energy solution $U$ of \eqref{singleeq},
$(1,1,1,1)U$ is a positive vector solution of \eqref{eq2} with $n=4.$
Furthermore, the matrix $A$ has an eigenvalue $1$ with orthonomal eigenvectors $\{(1,0,1,0), (0,1,0,1)\}$.
Thus the Morse index of $(1,1,1,1)U$ is $2$.
We strongly believe that for any $k > 0$, there exist large $n > 0$ and $n\times n$ matrix $A$ with $a_{ij} \ge 0, 1\le i,j \le n$ and $a_{ij} =0, |i-j|\le k$
for which there exists a positive vector solution of \eqref{eq2} with the Morse index $n-2.$
\end{remark}

\section{Variational characterization of  the positive vector solutions}

In this section, we first give a variational characterization for a synchronized positive vector solution to be a ground state solution. Then we use it to establish a new existence result of positive vector solutions for the more general system \eqref{eq1} for which the linear frequencies are not assumed to be the same.

\begin{thm}\label{V1}
We assume that  $\lambda_1=\cdots=\lambda_n = \lambda.$ 
The following are equivalent:
\begin{itemize}
\item[(i)] the interaction matrix $A=(a_{ij})_{1\leq i,j\leq n}$ with $a_{ij} \ge 0$ has eigenvalues $\mu_1 > 0 \ge  \mu_2 \ge \cdots \ge \mu_n$ and the convex hull
of column vectors $\{\vec{a}_{i}\}_{i=1}^n$ with $\vec{a}_i = (a_{i1},\cdots,a_{in})$
contains an interior point $(e,\cdots,e)$ for some $e > 0;$
\item[(ii)] the following minimization
 \[ M_\lambda \equiv \inf\Big \{I(\vec{u}) \ \Big | \ I^\prime(\vec{u})(\vec{u}) = 0 \ \text{ for } \ \vec{u} \in \SH \setminus\{0\} \Big \}\]
 is attained by  a positive vector solution $\vec{c} U \in \R^n U $ of \eqref{eq3} where $U$ is a least energy solution  of \eqref{singleeq}.
\end{itemize}
\end{thm}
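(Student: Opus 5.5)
The plan is to reduce both directions to a finite-dimensional problem about the quartic form $Q(\vec{s})=\vec{s}\cdot A\vec{s}$ on the simplex, using the Nehari structure of $I$. On the Nehari set, $I(\vec{u})=\frac14\|\vec{u}\|_\Omega^2$ and $\|\vec{u}\|^2_\Omega=\int\sum a_{ij}u_i^2u_j^2$, so
\[
M_\lambda=\inf_{\vec u\neq 0}\frac14\frac{\|\vec u\|_\Omega^4}{\int_\Omega\sum_{i,j}a_{ij}u_i^2u_j^2\,dx}.
\]
Write $\rho=(\sum_i u_i^2)^{1/2}$ and $s_i=u_i^2/\rho^2$ pointwise, so that $\vec{s}$ lies in the simplex $\Delta$. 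Then $\sum a_{ij}u_i^2u_j^2=\rho^4 Q(\vec{s})\le \rho^4 \max_\Delta Q$. By the diamagnetic-type inequality $|\nabla\rho|^2\le\sum|\nabla u_i|^2$ we have $\|\rho\|_\Omega^2\le\|\vec u\|_\Omega^2$. Hence
\[
M_\lambda\ \ge\ \frac{1}{\max_\Delta Q}\cdot\frac14\inf_{w\ne0}\frac{\|w\|^4_\Omega}{|w|_4^4}=\frac{I_0(U)}{\max_\Delta Q},
\]
where $I_0$ is the scalar functional of \eqref{singleeq}. Equality is attained by $\vec u=\sqrt{\vec s^*}\,U$ whenever $\vec s^*$ maximizes $Q$ on $\Delta$. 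So $M_\lambda$ is always attained by some synchronized (possibly semi-vector) element, and the question becomes: when is the maximizer of $Q$ on $\Delta$ unique and interior, with $\sqrt{\vec s^*}$ proportional to a solution $\vec c$ of \eqref{Ac=1}?

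For (i)$\Rightarrow$(ii), assume that $A$ has exactly one positive eigenvalue and that $(e,\dots,e)=A\vec t$ with $\vec t$ interior to the simplex, so that \eqref{Ac=1} is solvable with $c_i^2=t_i/e$. The key claim is that on the hyperplane $\{\sum s_i=1\}$ the form $Q$ attains its maximum over $\Delta$ at $\vec t$. Write $\vec s=\vec t+\vec h$ with $\sum h_i=0$. Then
\[
Q(\vec s)=Q(\vec t)+2\vec h\cdot A\vec t+Q(\vec h)=Q(\vec t)+2e\textstyle\sum h_i+Q(\vec h)=Q(\vec t)+Q(\vec h).
\]
Since $\vec h\perp(1,\dots,1)$ and $A\vec t$ is parallel to $(1,\dots,1)$, $\vec h$ is $A$-orthogonal to $\vec t$, and $Q(\vec t)>0$. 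A form with exactly one positive eigenvalue is negative semidefinite on the $A$-orthogonal complement of a positive vector, so $Q(\vec h)\le0$. Thus $\max_\Delta Q=Q(\vec t)=e$, and $\vec cU$ (after scaling onto Nehari) attains $M_\lambda$. It is a positive vector solution by construction. I would cite Lemma \ref{M0} and Theorem \ref{T1}(i) only as consistency checks, since the Morse index equals $1$ there.

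For (ii)$\Rightarrow$(i), suppose $\vec cU$ with $c_i>0$ attains $M_\lambda$. Then \eqref{Ac=1} holds, so $(1,\dots,1)=A(\vec c\odot\vec c)$ and, after normalizing, $(e,\dots,e)$ is a positive convex combination of the columns, hence interior. A minimizer on the Nehari manifold is a critical point with Morse index at most $1$: the Nehari manifold has codimension one and the constrained second variation is nonnegative. By Theorem \ref{T1}(i), the Morse index is the number of positive eigenvalues of $A$, which is at least $1$ because $A\ne0$ has nonnegative entries. Hence $\mu_1>0\ge\mu_2$.

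The step I expect to be the main obstacle is the careful handling of the pointwise substitution $s_i=u_i^2/\rho^2$ and of the inequality $\|\rho\|\le\|\vec u\|$ in $H$, including the radial case $\Omega=\R^N$. A related issue is the precise meaning of "interior point": whether it means relative interior of the convex hull or positivity of all weights. I would fix the latter, matching the cone remark before Lemma \ref{M0}. A further subtlety is degenerate $A$, where the Morse-index-$\le1$ argument and Theorem \ref{T1} still apply.
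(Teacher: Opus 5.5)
Your proof is correct. For (ii)$\Rightarrow$(i) it matches the paper: a Nehari minimizer has Morse index at most $1$, and Theorem~\ref{T1}(i) equates the Morse index with the number of positive eigenvalues of $A$. For (i)$\Rightarrow$(ii) you take a genuinely different route.

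The paper proceeds in four steps. It gets a nonnegative Nehari minimizer by a compactness argument and uses Correia's synchronization theorem \cite{Corr2} to write it as $\vec eU$. It then excludes a semi-vector minimizer with $A(\vec e\odot\vec e)\neq\vec 1$ via Theorem~\ref{T4}, since such a minimizer would force $\vec cU$ to have Morse index at least $2$, contradicting Theorem~\ref{T1}. Finally, it handles the case $A(\vec e\odot\vec e)=A(\vec c\odot\vec c)$ through $\ker A\perp(1,\dots,1)$, which gives equal energies.

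You instead compute $M_\lambda=I_0(U)/\max Q$ (maximum over the simplex) directly, using $\rho=|\vec u|$ and $|\nabla\rho|^2\le\sum_i|\nabla u_i|^2$. This is essentially the mechanism behind \cite{Corr2}, reproved in a few lines. You then settle a purely finite-dimensional question by inertia. With exactly one positive eigenvalue, $Q$ is negative semidefinite on the $A$-orthogonal complement of $\vec t$, and that complement is exactly $\{\sum_i h_i=0\}$ because $A\vec t=e(1,\dots,1)$. Your inequality $Q(\vec t+\vec h)\le Q(\vec t)$ does the work that the discriminant computation of Theorem~\ref{T4} does in the paper.

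Your route is self-contained: it needs no existence of minimizers, no black-box synchronization, and no Theorem~\ref{T4}. It also gives the explicit value $M_\lambda=(c_1^2+\cdots+c_n^2)I_0(U)$, and it does not use $a_{ij}\ge 0$. The paper's route, in exchange, shows how its Morse-index results fit together.

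You could also drop Theorem~\ref{T1} from (ii)$\Rightarrow$(i) within your own framework. An interior maximizer of $Q$ on the simplex forces $Q\le 0$ on the codimension-one subspace $\{\sum_i h_i=0\}$, so $A$ has at most one positive eigenvalue.

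Your worry about the word interior is moot. The convex hull of $n$ points in $\R^n$ has empty interior, so relative interior is meant, and relative-interior points are exactly the convex combinations with all weights positive.
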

\begin{proof}
It is obvious from Theorem \ref{T1} that $(ii)$ implies $(i)$.

Suppose that $(i)$ holds.
Since the convex hull
of column vectors $\{\vec{a}_{i}\}_{i=1}^n$ 
contains an interior point $(c,\cdots,c)$ for some $c > 0,$
there exist $c_1, \cdots c_n > 0$ such that 
\begin{equation*}
\sum_{i=1}^n \vec{a}_{i}c_i^2 = A
\begin{pmatrix}
c_1^2\\
c_2^2\\
\vdots\\
c_n^2
\end{pmatrix}
=
\begin{pmatrix}
1\\
1\\
\vdots\\
1
\end{pmatrix}.
\end{equation*}
Then, for any least energy positive solution $U$ of \eqref{singleeq},
$\vec{c} U=(c_1,\cdots,c_n)U\in \R^nU$ is a positive vector solution.
On the other hand, it is standard  to show that there exists a nonnegative minimum point $\vec{u}$ of $I$ on $\{ \vec{u} \in \SH \setminus\{0\} \  |  \ I^\prime(\vec{u})(\vec{u}) = 0 \}.$
A synchronization result in \cite[Theorem 1]{Corr2} implies that $\vec{u} \in \R^nU$ for a least energy positive solution $U$ of \eqref{singleeq}. Let $\vec{u} = \vec{e} U$ for $\vec{e} = (e_1,\cdots,e_n)$ with $e_i \ge 0.$
Note that $\vec{e} U$ is a nonnegative solution of \eqref{eq3} with Morse index $1.$
If $\vec{e} U$ is not a positive vector solution of \eqref{eq3} and satisfies
$$
A\begin{pmatrix}
    e_1^2\\ \vdots\\ e_n^2
\end{pmatrix}\neq A\begin{pmatrix}
    c_1^2\\ \vdots\\ c_n^2
\end{pmatrix},
$$
then Theorem \ref{T4} implies that  the vector solution $\vec{c}U$ is a positive vector solution of 
\eqref{eq3} of Morse index $\ge 2.$ This contradicts Theorem \ref{T1}. On the other hand, if $\vec{e}U$ is not a positive vector solution of \eqref{eq3} and satisfies
$$
A\begin{pmatrix}
    e_1^2\\ \vdots\\ e_n^2
\end{pmatrix}=A\begin{pmatrix}
    c_1^2\\ \vdots\\ c_n^2
\end{pmatrix},
$$
then
$$
\begin{pmatrix}
    e_1^2-c_1^2\\ \vdots\\ e_n^2-c_n^2
\end{pmatrix}\perp \begin{pmatrix}
    1\\ \vdots\\ 1
\end{pmatrix}
$$
because the kernel of $A$ is perpendicular to the image of $A$. Therefore,
$$
I(\vec{c}U)=\sum_{i=1}^n c_i^2E_\lambda=\sum_{i=1}^n e_i^2E_\lambda=I(\vec{e}U),
$$
where
\[E_\lambda \equiv \frac12\int_\Omega|\nabla U|^2 + \lambda U^2 dx - \frac14\int_\Omega U^4 dx.\]
Hence $\vec{c}U$ is also a minimizer, and this completes the proof.
\end{proof}
Theorem  \ref {V1} implies as a perturbation result that if
the interaction matrix $A=(a_{ij})_{1\leq i,j\leq n}$ with $a_{ij} \ge 0$ has eigenvalues $\mu_1 > 0 >  \mu_2 \ge \cdots \ge \mu_n$ and the convex hull
of column vectors $\{\vec{a}_{i}\}_{i=1}^n$ with $\vec{a}_i = (a_{i1},\cdots,a_{in})$
contains an interior point $(c,\cdots,c)$ for some $c > 0,$ there exists
$\delta > 0$ such that for $\lambda_1,\cdots,\lambda_n > 0$ with $\max_{1\le i \ne j\le n}|\lambda_i-\lambda_j| \le \delta,$ 
\eqref{eq1}
has a positive vector solution, which is a minimizer of 
the following minimization
 \begin{equation} \label{LEMin}  M_{\lambda_1,\cdots,\lambda_n} \equiv \inf\Big \{  \tilde{I}(\vec{u}) \ \Big | \  
   \tilde{I}^\prime(\vec{u})(\vec{u}) = 0  \ \text{ for } \ \vec{u} \in \SH \setminus\{0\} \Big \},\end{equation}
  where \[
 \tilde{I} (\vec{u}) \equiv \frac{1}{2} \sum_{i=1}^n |\nabla u_i|_{2, \Omega}^2 + \lambda_i |u_i|_{2, \Omega}^2  - \frac{1}{4} \int_{\Omega} 
\sum_{i,j=1}^{n} a_{ij} u_i^2 u_j^2 \, dx.\]
Here we get a non-perturbative existence result for general $\lambda_1,\cdots,\lambda_n > 0$ using Theorem  \ref {V1}.
We refer to similar works in \cite{CFT,LLC,LWz2} for restricted class of interaction matrix $A,$ typically $a_{ij} = a$ for $i\ne j$.
Note that if
\begin{equation*}
A
\begin{pmatrix}
c_1^2\\
c_2^2\\
\vdots\\
c_n^2
\end{pmatrix}
=
\begin{pmatrix}
1\\
1\\
\vdots\\
1
\end{pmatrix},
\end{equation*}
we see that
$$
I(\vec{c}U)=\sum_{i=1}^n c_i^2 E_\lambda,
$$
where
\[E_\lambda \equiv \frac12\int_\Omega|\nabla U|^2 + \lambda U^2 dx - \frac14\int_\Omega U^4 dx.\]
Let $B=(b_{ij})$ be a $k\times k$ principal submatrix of $A$
and assume that there exists $d_1,\cdots,d_k > 0$ such that
$\sum_{j=1}^kb_{ij}d_j^2 = 1$ for $i =1,\cdots,k.$
Then, we have the following result.
\begin{prop} \label{compare}
For an interaction symmetric matrix $A = (a_{ij})$ with $a_{ij} > 0$
and a $k\times k$ principal submatrix $B=(b_{ij})$ of $A$,
we assume that $A$ has eigenvalue $\mu_n\le \mu_{n-1} \le \cdots \le \mu_2 < 0 < \mu_1$ and there exist $\alpha_1,\cdots,\alpha_n,\beta_1,\cdots,\beta_k > 0$ satisfying
\[A \begin{pmatrix}
    \alpha_1\\ \vdots\\ \alpha_n
\end{pmatrix} = \begin{pmatrix}
    1\\ \vdots\\ 1\end{pmatrix}, \qquad B \begin{pmatrix}
    \beta_1 \\ \vdots\\ \beta_k
\end{pmatrix} = \begin{pmatrix}
    1\\ \vdots\\ 1
\end{pmatrix}.
\]
Then, if $k< n,$
 \[ 1+ |\mu_2|\frac{\min_{1\le j\le n} \big(\sum_{i=1}^n(-1)^{i+j}\det(( a_{pq})_{p\neq i,q\neq j}) \big)^2 }{\det(A)\sum_{i,j=1}^n(-1)^{i+j}\det(( a_{pq})_{p\neq i,q\neq j})} 
 \le 1+ |\mu_2| \frac{\alpha_{k+1}^2 + \cdots +\alpha_n^2}{\sum_{i=1}^n \alpha_i} \le \frac{\sum_{i=1}^k \beta_i}{\sum_{i=1}^n \alpha_i}.\]
\end{prop}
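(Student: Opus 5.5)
The first inequality is pure bookkeeping with cofactors; the second comes from evaluating the quadratic form of $A$ on a vector orthogonal to its Perron eigenvector.

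\emph{Notation.} Write $\vec 1=(1,\dots,1)$ and $\vec\alpha=(\alpha_1,\dots,\alpha_n)$. Let $T=\sum_{i=1}^n\alpha_i=\vec\alpha\cdot\vec 1$ and $S=\sum_{i=1}^k\beta_i$. Put $R=\alpha_{k+1}^2+\cdots+\alpha_n^2$. We may assume the principal submatrix $B$ sits on the indices $1,\dots,k$, and we set $\tilde\beta=(\beta_1,\dots,\beta_k,0,\dots,0)\in\R^n$. With $C_{ij}$ the cofactors, $\vec\alpha=A^{-1}\vec 1$ gives
\[
\alpha_j=\frac{\sum_i C_{ij}}{\det(A)}, \qquad T=\frac{\sum_{i,j}C_{ij}}{\det(A)}.
\]

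\emph{First inequality.} Substituting these expressions, the cofactor fraction equals $\min_j\alpha_j^2/T$. Since $k<n$, the sum $R$ contains at least one term, so $R\ge\min_j\alpha_j^2$. Also $T>0$, because every $\alpha_i>0$. Hence the cofactor fraction is at most $R/T$, and multiplying by $|\mu_2|$ and adding $1$ gives the first inequality. (This step also shows $\det(A)\sum_{i,j}C_{ij}>0$, so the left-hand side is well defined.)

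\emph{Second inequality: the three values of the form.} It is equivalent to $S\ge T+|\mu_2|R$. Using $A\vec\alpha=\vec 1$ and $B\vec\beta=\vec 1$, I will record
\[
\vec\alpha\cdot A\vec\alpha=T,\qquad \tilde\beta\cdot A\vec\alpha=\tilde\beta\cdot\vec 1=S,\qquad \tilde\beta\cdot A\tilde\beta=\vec\beta\cdot B\vec\beta=S.
\]

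\emph{Second inequality: a vector in the negative part of the form.} Let $\vec v$ be a unit eigenvector of $A$ for $\mu_1$. Since $a_{ij}>0$, Perron--Frobenius lets us take $\vec v$ with strictly positive entries; hence $\vec\alpha\cdot\vec v>0$. Set
\[
s=\frac{\tilde\beta\cdot\vec v}{\vec\alpha\cdot\vec v},\qquad \vec\delta=\tilde\beta-s\vec\alpha ,
\]
so that $\vec\delta\perp\vec v$. Because $\mu_1$ is the only positive eigenvalue and $\vec v$ spans its eigenspace, the spectral decomposition gives $\vec\delta\cdot A\vec\delta\le\mu_2|\vec\delta|^2=-|\mu_2||\vec\delta|^2$. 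For $i>k$ we have $\delta_i=-s\alpha_i$, so $|\vec\delta|^2\ge s^2R$. Expanding $\vec\delta\cdot A\vec\delta$ with the three values above yields
\[
S-2sS+s^2\big(T+|\mu_2|R\big)\le 0 .
\]

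\emph{Second inequality: conclusion.} The quadratic $q(x)=S-2Sx+(T+|\mu_2|R)x^2$ has positive leading coefficient and satisfies $q(s)\le 0$, so it has a real root. Its discriminant is therefore nonnegative: $S^2\ge S\,(T+|\mu_2|R)$. Since $S>0$, dividing by $S$ gives $S\ge T+|\mu_2|R$, and dividing by $T>0$ gives the second inequality.

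\emph{Main obstacle.} The delicate point is choosing the vector on which to test the form. The natural choices $\tilde\beta-\vec\alpha$, or the $A$-orthogonal projection of $\tilde\beta$ off $\vec\alpha$, are only orthogonal to $\vec 1$, not to the top eigenvector, so the bound by $\mu_2$ does not apply to them. The fix is to take the multiple $s$ of $\vec\alpha$ that makes $\vec\delta$ Euclidean-orthogonal to the Perron vector. We never need to know $s$ explicitly, because the discriminant argument eliminates it.
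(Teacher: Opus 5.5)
Your proof is correct, and for the second inequality it takes a genuinely different route from the paper. The paper partitions $A=\begin{pmatrix} B & C\\ C^T & D\end{pmatrix}$ and first reduces to nonsingular $B$ by an approximation. It then derives the exact identity $\sum_{i=1}^k\beta_i-\sum_{i=1}^n\alpha_i=-\vec w^{T}(D-C^TB^{-1}C)\vec w$, where $\vec w=(\alpha_{k+1},\dots,\alpha_n)$. Sylvester's law of inertia, together with the fact that $B$ has exactly one positive eigenvalue, shows that the Schur complement $D-C^TB^{-1}C$ is negative definite. Cauchy interlacing applied to its inverse, a principal submatrix of $A^{-1}$, then gives that all its eigenvalues are at most $\mu_2$. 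You bypass the block structure entirely: you test the form on $\tilde\beta-s\vec\alpha$, made Euclidean-orthogonal to the Perron vector, and eliminate $s$ through the discriminant. This uses only the spectral theorem and the simplicity of $\mu_1$. As a result you never invert $B$, so the approximation step disappears. You also do not really need $a_{ij}>0$, since $\vec\alpha\cdot\vec v\neq 0$ already follows from $\vec\alpha\cdot A\vec\alpha=T>0$ and the negativity of $A$ on $\vec v^{\perp}$. What the paper's route buys in exchange is an exact formula for the gap $\sum\beta_i-\sum\alpha_i$ as a Schur-complement quadratic form, which carries more structural information than the inequality alone. Your handling of the first inequality (rewriting the cofactor quotient as $\min_j\alpha_j^2/T$ and using $k<n$) is the same as the paper's.
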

\begin{proof}
If $B$ is singular, we can approximate $B$ by nonsingular matrices satisfying the same property.
Thus, we may assume that  $A$ and $B$ are nonsingular.
We write
\[  A=  \begin{pmatrix}
    B & C \\
    C^T & D \end{pmatrix},  \vec{\alpha} = \begin{pmatrix}
    \alpha_1\\ \vdots\\ \alpha_n  
\end{pmatrix} =  \begin{pmatrix}
    \vec{\alpha}^1\\ \vec{\alpha}^2  
\end{pmatrix} \in \R^k\times \R^{n-k},  \vec{\beta} = \begin{pmatrix}
    \beta_1\\ \vdots\\ \beta_k  
\end{pmatrix} \in \R^k,   \vec{1}_l = \begin{pmatrix}
    1\\ \vdots\\ 1 
\end{pmatrix} \in \R^l,\]
where $D$ is $(n-k)\times(n-k)$ matrix and $C$ is $k\times(n-k)$ matrix.
Then, we see that
\[ B\vec{\alpha}^1 + C\vec{\alpha}^2 = \vec{1}_k\]
\[ C^T\vec{\alpha}^1 + D\vec{\alpha}^2 = \vec{1}_{n-k}.\]
Since $\vec{\alpha}^1 + B^{-1}C\vec{\alpha}^2 = B^{-1}\vec{1}_k = \vec{\beta},$
we see from the second identity that
\[C^T\vec{\beta}-C^TB^{-1}C\vec{\alpha}^2 +D\vec{\alpha}^2 = \vec{1}_{n-k}.\]
Denoting $S = D-C^TB^{-1}C,$ the Schur complement of $B$, we see that
\[S\vec{\alpha}^2 =  \vec{1}_{n-k}- C^T\vec{\beta}.\]
Then, we get
\[\sum_{i=1}^n\alpha_i = \vec{1}_k^T \vec{\alpha}^1+ \vec{1}_{n-k}^T \vec{\alpha}^2 =  \vec{1}_k^T(\vec{\beta}-B^{-1}C\vec{\alpha}^2) + \vec{1}^T_{n-k} \vec{\alpha}^2=
\sum_{i=1}^k\beta_i -  \vec{\beta}^T C\vec{\alpha}^2 + \vec{1}^T_{n-k} \vec{\alpha}^2\]
\[ = \sum_{i=1}^k\beta_i + (\vec{1}_{n-k} - C^T\vec{\beta})^T\vec{\alpha}^2 = \sum_{i=1}^k\beta_i +(\vec{\alpha}^2)^TS\vec{\alpha}^2; \]
thus
\begin{equation} \label{dieies}
\sum_{i=1}^k\beta_i  - \sum_{i=1}^n\alpha_i = -(\vec{\alpha}^2)^TS\vec{\alpha}^2. \end{equation}
By an LDU decomposition of $A$, we see that for $S= D- C^TB^{-1}C,$
\[  A=  \begin{pmatrix}
    B & C \\
    C^T & D \end{pmatrix} = \begin{pmatrix}
    I_k & 0 \\
    C^TB^{-1} & I_{n-k}
\end{pmatrix} 
\begin{pmatrix}
    B & 0 \\
    0 & D- C^TB^{-1}C
\end{pmatrix}
\begin{pmatrix}
    I_k & B^{-1}C \\
      0 &  I_{n-k}
\end{pmatrix}; \]
\[  A^{-1}=  \begin{pmatrix}
    B^{-1}+B^{-1}CS^{-1}C^TB^{-1} & -B^{-1}CS^{-1}\\
    -S^{-1}C^TB^{-1} & S^{-1}\end{pmatrix}. \]
This implies that $S$ is nonsingular and $S^{-1}$ is a principal submatrix of $A^{-1}$.
Since all entries of $A$ are positive, we see from the minmax characterization of eigenvalues of $A$ and $B$ that $B$ also has only one positive eigenvalue.
The Sylvester law of inertia implies that the number of positive eigenvalues of $A$ is the sum of those of $B$ and $S$.
This implies that all eigenvalues of $S$ are strictly negative;
thus $(\vec{\alpha}^2)^TS\vec{\alpha}^2 < 0.$
Let $-\omega_k\le \cdots \le -\omega_1 < 0$ be eigenvalues of $S^{-1}.$
Since $A^{-1}$ has eigenvalues \[\nu_n\equiv 1/\mu_2 \le \cdots \le \nu_{2} \equiv 1/\mu_n < 0 < \nu_1 \equiv 1/\mu_1,\]
we see from the Cauchy interlacing theorem or the Poincar\'e separation theorem that 
\[ \nu_{n-k+i}  \le -\omega_i \le  \nu_i\ \ \text{ for } \  i=1,\cdots,k < n.\]
This implies that $\frac{-1}{\omega_k} \le \frac{1}{\nu_n} = \mu_2$;
thus the eigenvalues of $S$ is less than or equal to $\mu_2.$
This and \eqref{dieies} imply that
\begin{equation} \label{partes}|\mu_2| (\alpha_{k+1}^2 + \cdots +\alpha_n^2) \le \sum_{i=1}^k \beta_i - \sum_{i=1}^n \alpha_i.\end{equation}
Since $\vec{\alpha}$ is the inverse image of $\vec{1}_n$ by $A,$
$\alpha_i = \sum_{j=1}^n(-1)^{i+j}\text{det}(( a_{pq})_{p\neq i,q\neq j})/\det(A).$
Thus, we see that
\[ \frac{\big(\sum_{j=1}^n(-1)^{i+j}\det(( a_{pq})_{p\neq i,q\neq j})\big)^2 }{\det(A)\sum_{i,j=1}^n(-1)^{i+j}\det(( a_{pq})_{p\neq i,q\neq j})} 
= \frac{(\alpha_i)^2}{\sum_{i=1}^n\alpha_i}.\]
This and \eqref{partes} implies the claimed inequalities
\end{proof}
\begin{thm}\label{LET}
Suppose that the symmetric interaction matrix $A=(a_{ij})_{1\leq i,j\leq n}$ with $a_{ij} >  0$ has eigenvalues $\mu_1 > 0 >  \mu_2 \ge \cdots \ge \mu_n$ and
there exist $c_1,\cdots,c_n > 0$ satisfying
 \[A \begin{pmatrix}
    c_1^2\\ \vdots\\ c_n^2
\end{pmatrix} = \begin{pmatrix}
    1\\ \vdots\\ 1\end{pmatrix}.\]
    
Then, for $\lambda_1,\cdots,\lambda_n > 0,$ there exists a positive vector solution of \eqref{eq1}, which is a minimizer of \eqref{LEMin},
if  \begin{equation}\label{VarCondi}\Big(\frac{\max\{\lambda_1,\cdots,\lambda_n\}}{ \min\{\lambda_1,\cdots,\lambda_n\}}\Big)^{2-\frac{N}{2}}
< 1+ |\mu_2|\frac{\min_{1\le j\le n} \big(\sum_{i=1}^n(-1)^{i+j}\det(( a_{pq})_{p\neq i,q\neq j})\big)^2 }{\det(A)\sum_{i,j=1}^n(-1)^{i+j}\det(( a_{pq})_{p\neq i,q\neq j})}.\end{equation}

\end{thm}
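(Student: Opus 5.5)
The plan is to show that the Nehari-type infimum $M_{\lambda_1,\cdots,\lambda_n}$ in \eqref{LEMin} is attained, and then to rule out, by energy comparison, that a minimizer has a vanishing component. Write $\lambda_{\min}=\min_i\lambda_i$ and $\lambda_{\max}=\max_i\lambda_i$. By scaling, a least energy solution of \eqref{singleeq} with parameter $\lambda$ has energy $E_\lambda=\lambda^{2-N/2}E_1$ when $\Omega=\R^N$, which is the main setting here. On a bounded domain I would instead use only the monotonicity $E_{\lambda_{\min}}\le E_\lambda\le E_{\lambda_{\max}}$ together with the same homogeneity heuristic.

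First I establish the existence of a nonnegative minimizer $\vec u$ of $M_{\lambda_1,\cdots,\lambda_n}$; this is standard, via radial compactness or a bounded domain, and by replacing $u_i$ with $|u_i|$ the minimizer is a nonnegative solution of \eqref{eq1}.

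Second, the upper bound. Since $\tilde I\le I_{\lambda_{\max}}$ pointwise, I project $\vec cU_{\lambda_{\max}}$ onto the Nehari manifold of $\tilde I$. This gives
\[
M_{\lambda_1,\cdots,\lambda_n}\le M_{\lambda_{\max}}=\Big(\sum_{i=1}^n c_i^2\Big)E_{\lambda_{\max}},
\]
where the equality $M_{\lambda_{\max}}=\big(\sum_i c_i^2\big)E_{\lambda_{\max}}$ holds by Theorem \ref{V1}. Its hypotheses apply because $\mu_2<0$ and $A\vec c^{\,2}=\vec 1$ with $c_i>0$. I also need the Nehari projection to be well defined, i.e. that the quartic term is positive, which is immediate since $a_{ij}>0$.

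Third, the lower bound for semi-vector minimizers. Suppose $\vec u$ has support set $J\subsetneq\{1,\dots,n\}$, with $|J|=k$ and principal submatrix $B=A_J$. Then $\vec u$ lies on the Nehari manifold of the $k$-component system, and its energy is at least the minimum of that system with all frequencies lowered to $\lambda_{\min}$. Lowering frequencies decreases $\tilde I$, and the Nehari projection lowers the energy further. By Theorem \ref{V1} and its proof (Corr\'ea-type synchronization plus Theorem \ref{T4}), the ground level of the subsystem with equal frequency $\lambda_{\min}$ is $\big(\sum_{i\in J}\beta_i\big)E_{\lambda_{\min}}$, where $B\vec\beta=\vec 1$ with $\beta_i>0$ whenever a positive synchronized solution exists. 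If no such solution exists, I recurse to a smaller support, since the minimizer is then itself semi-vector. Hence
\[
\tilde I(\vec u)\ge \Big(\sum_{i\in J}\beta_i\Big)E_{\lambda_{\min}}
\]
for some principal submatrix admitting a positive solution.

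Finally, Proposition \ref{compare} with $\alpha_i=c_i^2$ turns the right side of \eqref{VarCondi} into a lower bound for $\sum_{i\in J}\beta_i/\sum_i\alpha_i$. Combined with $E_{\lambda_{\max}}/E_{\lambda_{\min}}=(\lambda_{\max}/\lambda_{\min})^{2-N/2}$, this gives
\[
\Big(\sum_{i\in J}\beta_i\Big)E_{\lambda_{\min}}>\Big(\sum_{i}c_i^2\Big)E_{\lambda_{\max}}\ge M_{\lambda_1,\cdots,\lambda_n},
\]
a contradiction. Also $\vec u\ne0$ on the Nehari set, so $J\ne\emptyset$. Therefore every component of $\vec u$ is nontrivial, and the strong maximum principle applied to each equation of \eqref{eq1} gives $u_i>0$.

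The main obstacle is the lower bound for semi-vector minimizers, i.e. justifying that the least energy of the $k$-component subsystem is at least $\big(\sum\beta_i\big)E_{\lambda_{\min}}$ when $B$ may have the wrong sign pattern. The proof of Proposition \ref{compare} shows that $B$ has exactly one positive eigenvalue, so Theorem \ref{V1}(i) applies once the positive $\vec\beta$ exists. Proposition \ref{compare} already presupposes $\beta_i>0$. To close the recursion I would pick $J$ minimal: the minimizer restricted to $J$ is then a positive vector solution of the subsystem, so the subsystem with equal frequencies has ground level given via Theorem \ref{V1} whenever \eqref{liposol} is solvable for $B$. For $\Omega=\R^N$ the scaling identity is exact. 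On bounded domains I would assume the statement is meant in the $\R^N$ or scaling-compatible setting.
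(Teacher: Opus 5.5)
Your proposal follows the paper's proof step for step: the upper bound $M_{\lambda_1,\cdots,\lambda_n}\le E_{\lambda^M}\sum_i c_i^2$ comes from Theorem~\ref{V1}. The lower bound for a semi-vector minimizer comes from lowering every frequency to $\lambda^m$ and using Correia's synchronization to write the equal-frequency subsystem level as $E_{\lambda^m}\sum_j d_j^2$, and the contradiction then follows from Proposition~\ref{compare} and $E_\lambda=\lambda^{2-N/2}E_1$. Your closing discussion about choosing $J$ minimal is unnecessary, because the support $J'\subseteq J$ of the synchronized equal-frequency ground state directly gives a principal submatrix with $A_{J'}(d_j^2)=\vec 1$, $d_j>0$, exactly as in the paper. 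Your caveat that the scaling identity is exact only on $\mathbb{R}^N$ (or on dilation-invariant domains) applies equally to the paper's own proof.
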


\begin{proof} 
We define 
\[ \lambda^M \equiv \max\{\lambda_1,\cdots,\lambda_n\}, \ \ \ \lambda^m \equiv  \min\{\lambda_1,\cdots,\lambda_n\}.\]
We first note that
\begin{equation}\label{eCo} M_{\lambda_1,\cdots,\lambda_n} \le M_{\lambda^M,\cdots,\lambda^M} = M_{\lambda^M}= E_{\lambda^M}(c_1^2+\cdots+c_n^2),\end{equation}
where for any $\lambda > 0,$  $E_\lambda$ is a least energy level of \eqref{singleeq}. 
Suppose that a nonnegative minimizer $\vec{u}=(u_1,\cdots,u_n)$ of $M_{\lambda_1,\cdots,\lambda_n}$ is not a positive vector solution of \eqref{eq1}.
We may assume that $u_1,\cdots,u_l> 0$ and $u_{l+1}=\cdots=u_n = 0$ with $l< n.$ Let $B_l = (a_{ij})_{1\le i,j\le l}$ be a principal submatrix of $A.$
Then, we may define a minimization $M_{\lambda_1,\cdots,\lambda_l}$
as in \eqref{LEMin} with $B_l$ replacing $A.$
The vanishing property $u_{l+1}=\cdots=u_n = 0$ implies that 
\begin{equation} \label{eCom}
M_{\lambda_1,\cdots,\lambda_n} = \tilde{I}(\vec{u}) \ge  M_{\lambda_1,\cdots,\lambda_l} \ge M_{\lambda^m,\cdots,\lambda^m},\end{equation}
where we put  $\lambda^m$ $l-$times in $M_{\lambda^m,\cdots,\lambda^m}$.
By the synchronization result in \cite{Corr1,Corr2},
 $M_{\lambda^m,\cdots,\lambda^m}$ is attained, up to a permutation of components, by $(d_1,\cdots,d_k,0,\cdots,0)U$ with $k \le l,$
 where $U$ is a least energy solution of \eqref{singleeq} with $\lambda^m$ replacing $\lambda.$
 This implies that 
 \[M_{\lambda^m,\cdots,\lambda^m} = E_{\lambda^m}(d_1^2+\cdots+d_k^2).\]
 Then,  we get from \eqref{eCo} and \eqref{eCom} that
 \[E_{\lambda^M}(c_1^2+\cdots+c_n^2) \ge M_{\lambda_1,\cdots,\lambda_n}  \ge E_{\lambda^m}(d_1^2+\cdots+d_k^2) \]
Then, since $E_\lambda=\lambda^{2-\frac{N}{2}}E_1,$
we see from Proposition \ref{compare} with $\alpha_i =c_i^2$ and $\beta_j = d_j^2$ for $1\le i\le n$ and $1\le j \le k$ that 
\[\Big(\frac{\lambda^M}{\lambda^m}\Big)^{2-\frac{N}{2}} 
\ge \frac{d_1^2+\cdots+d_k^2}{c_1^2+\cdots+c_n^2} \ge 1+ |\mu_2|\frac{\min_{1\le j\le n} \big(\sum_{i=1}^n(-1)^{i+j}\det(( a_{pq})_{p\neq i,q\neq j})\big)^2 }{\det(A)\sum_{i,j=1}^n(-1)^{i+j}\det(( a_{pq})_{p\neq i,q\neq j})}.\] 
This contradicts \eqref{VarCondi}.
Thus we conclude that the minimizer $\vec{u}$ is a positive vector solution.
\end{proof}
Let $(c_1,\cdots,c_n)U =\vec{c} U \in \R^n U $ be a positive vector solution of  a system \eqref{eq3} with the interaction matrix $A=(a_{ij})_{1\leq i,j\leq n}$ satisfying $a_{ij}=a_{ji} \ge 0 $ for $1\leq i,j\leq n$.
Let $\gamma_1 \ge \cdots \ge \gamma_k > 0$ be the positive eigenvalues of $A$ with corresponding orthonormal eigenvectors 
$\vec{\eta}_1=(\eta_1^1,\cdots,\eta_n^1),\cdots,\vec{\eta}_k=(\eta_1^k,\cdots,\eta_n^k),$ respectively.
Then, we consider a set 
\[\widetilde{M}_k \equiv \{ \vec{u}=(u_1,\cdots,u_n) \in \SH \setminus\{0\} \ | \ I^\prime(\vec{u})\bigg (\frac{\eta_1^i}{(c_1)^2}u_1,\cdots,\frac{\eta_n^i}{(c_n)^2}u_n\bigg) = 0, 
\ \ i=1,\cdots,k\}.\]
We can show that a neighborhood $V_k$ of $\vec{u}=(c_1,\cdots,c_n)U$ in $\widetilde{M}_k$ is a smooth manifold.
Then, $\vec{u}=(c_1,\cdots,c_n)U$ would be a local minimizer of $I$ on $\widetilde{M}_k.$
Since the set $\widetilde{M}_k$ depends on the eigenvectors $\vec{\eta}^i, i =1,\cdots,k,$ and $c_1,\cdots,c_n$, it is not effective in the variational characterization of $\vec{u}=(c_1,\cdots,c_n)U.$
On the other hand,
in a pioneering paper \cite{LW1} and \cite{LW2} by Lin and Wei, when $A$ is close to the identity matrix, they considered a minimization of $I$ on
\[M_n \equiv \{ \vec{u}=(u_1,\cdots,u_n) \in \SH \ | \ J_i(\vec{u}) = 0, u_j \ne 0 \textup{ for }  1 \le i,j \le n  \},\]
where for each $l \in \{1,\cdots,n\}$ and $\vec{u} \in \SH,$ 
\[J_l(\vec{u}) \equiv I^\prime(\vec{u})(0,\cdots,0,u_l,0,\cdots,0).\]
For general nonsingular $A,$ it is not easy to prove that $M_n$ is smooth, 
which follows if \[\bigg(a_{ij} \int_\Omega u_i^2u_j^2 dx\bigg)_{1\le i,j\le n}\] is nonsingular for any $\vec{u} \in \M_n,$  
and that there exists a minimizer of $I$ on $M_n$.
As it is noted in \cite{AC}, $M_n$ is not closed. Furthermore there do not exist minima of $I$ over $M_n$ for certain $A$ (see \cite{S}).
On the other hand, Sirakov\cite{S} studied the minimization on $M_2$ for $n=2$ and general $\lambda_1, \lambda_2 > 0$ and found quite precise $0 < \nu_1 < \nu_2 < \infty$ depending on $\lambda_1,\lambda_2,a_{11},a_{22}$ such that for $a_{12} \in (0,\nu_1)\cup (\nu_2,\infty),$
the minimum is attained by a positive vector solution of \eqref{eq1}.
Here we characterize the solution $\vec{c}U$ by a local minimizer of $I$ over $M_n$ for general nonsingular $A$.
\begin{thm} \label{V2}
Let $(c_1,\cdots,c_n)U =\vec{c} U \in \R^n U $ be a positive vector solution of  a system \eqref{eq3} with the interaction matrix $A=(a_{ij})_{1\leq i,j\leq n}$ satisfying $a_{ij}=a_{ji} \ge 0 $ for $1\leq i,j\leq n$. Assume that $A =(a_{ij})$ is nonsingular and $U$ is a nondegenerate solution of \eqref{singleeq} in $H$.
Then,  an open neighborhood $V$ of $(c_1,\cdots,c_n)U$ in $M_n$ is a smooth manifold of codimension $n$ and $(c_1,\cdots,c_n)U$ is a strict local minimum point of $I$ on $V \subset M_n.$ 
\end{thm}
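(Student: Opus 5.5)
We view $M_n$ near $\vec{u}_0=\vec{c}U$ as the zero set of the map $J=(J_1,\dots,J_n):\SH\to\R^n$ and use the implicit function theorem for smoothness. Then we show that the Hessian of $I$ restricted to the tangent space $T=\ker J'(\vec{u}_0)$ is positive definite. Strict local minimality on $V$ follows from this, since $\vec{u}_0$ is a critical point of $I$ on all of $\SH$, so there are no Lagrange multiplier corrections.

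\textbf{Step 1 (smoothness).} We have
\[J_l(\vec{u})=\|u_l\|_\Omega^2-\sum_j a_{lj}\int_\Omega u_l^2u_j^2\,dx.\]
Its derivative in the directions $\vec{\phi}=(t_1c_1,\dots,t_nc_n)U\in\R^nU$ is computed using $\|U\|_\Omega^2=\int_\Omega U^4\,dx$. This gives
\[J_l'(\vec{u}_0)\vec{\phi}=\int_\Omega U^4\,dx\,\Big(2c_l^2t_l-2\sum_j a_{lj}c_l^2c_j^2(t_l+t_j)\Big)=-2\int_\Omega U^4\,dx\; c_l^2\sum_j a_{lj}c_j^2t_j ,\]
where we used $\sum_j a_{lj}c_j^2=1$. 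So the $n\times n$ matrix of $J'(\vec{u}_0)$ restricted to $\R^nU$ is, up to a nonzero factor, $\mathrm{diag}(c_l^2)\,A\,\mathrm{diag}(c_j^2)$. This is nonsingular because $\det A\ne0$. Hence $J'(\vec{u}_0)$ is onto $\R^n$, and nearby $M_n$ is a $C^2$ submanifold of codimension $n$. Since $u_j\neq0$ is an open condition, a neighborhood $V$ lies in $M_n$.

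\textbf{Step 2 (tangent space and Hessian).} At the critical point, $D^2(I|_V)(\vec{u}_0)=I''(\vec{u}_0)|_T$. By the computation above, $T\cap\R^nU=\{0\}$. We decompose $\SH=\R^nU\oplus(\R^nU)^\perp$. Here the orthogonal complement is taken in the inner product $\sum_i\int_\Omega(\nabla\phi_i\nabla U+\lambda\phi_iU)\,dx$ applied componentwise, equivalently $\int_\Omega U^3\phi_i\,dx=0$ for every $i$. This decomposition is $I''$-orthogonal. Indeed, $I''(\vec{u}_0)(\vec{\phi},\vec{\psi})=\langle\vec\phi,\vec\psi\rangle-\int_\Omega U^2\,\vec\phi^{\,T}B\vec\psi\,dx$, and for $\vec\phi=\vec tU$ the integral term gives $\int_\Omega U^3(B\vec t)\cdot\vec\psi\,dx$, which vanishes when every $\int_\Omega U^3\psi_i\,dx=0$.

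On $(\R^nU)^\perp$ we diagonalize $B$ as in Lemma \ref{MI}, so that $I''$ becomes $\sum_i Q_{\mu_i}(\chi_i)$ with $\chi_i\perp U$ and $\mu_i\in(-3,3]$. For $\mu_i\le1$ we have $Q_{\mu_i}(\chi)\ge0$, with equality only along $U$; for $\mu_i\in(1,3]$ the only negative direction is along $U$, which has been removed. Nondegeneracy of $U$ together with $\det A\neq0$ (which excludes $\mu_i=1$) gives the strict bound $I''\ge\delta\|\vec\phi\|^2$ on $(\R^nU)^\perp$. This needs a compactness argument: for $\chi\perp U$, the second eigenvalue of the weighted problem satisfies $\omega_2\ge3$, with strict inequality by nondegeneracy, so $Q_\mu(\chi)\ge(1-\mu/\omega_2)\|\chi\|^2$.

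\textbf{Step 3 (main obstacle).} A general $\vec\phi\in T$ has a nonzero component in $\R^nU$, where $I''$ is indefinite: by Theorem \ref{T1} it is negative along $A$'s positive eigendirections. Write $\vec\phi=\vec tU+\vec\psi$ with $\vec\psi\in(\R^nU)^\perp$. The constraint $J'(\vec{u}_0)\vec\phi=0$ determines $\vec t$ from $\vec\psi$, so we must show
\[I''(\vec tU,\vec tU)+I''(\vec\psi,\vec\psi)>0 .\]
We would compute $J_l'(\vec u_0)\vec\psi$ explicitly. Using the equation for $U$ and $\int_\Omega U^3\psi_i\,dx=0$, we get
\[J_l'(\vec u_0)\vec\psi=-\int_\Omega U^2\,2c_l\sum_j 2a_{lj}c_lc_j\,\psi_j\,dx .\]
Setting $s_l=c_l t_l$, the constraint reads $2\|U\|_4^4\,(\mathrm{diag}(c)A\,\mathrm{diag}(c)\,s)_l$ equal to a linear form in $\vec\psi$. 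Then $I''(\vec tU,\vec tU)=-2\|U\|_4^4\,s^TAs$ by \eqref{quadphi}. Eliminating $s$, this term becomes minus a quadratic form in $\vec\psi$ involving $A^{-1}$.

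We would try to absorb it into $I''(\vec\psi,\vec\psi)$ via a Cauchy--Schwarz-type inequality in each eigen-channel $\chi_i$. The relevant quantity is the quadratic form $\int_\Omega U^2\chi\zeta\,dx$, and the Rayleigh quotient structure of the linearized operator controls how large $\big(\int_\Omega U^3\zeta\,dx\big)^2$ and $\int_\Omega U^2\chi\,dx$ can be relative to $Q_\mu$. This is the step most likely to require care. If direct estimation fails, the fallback is a second characterization: view $I|_V$ near $\vec u_0$ via the fibering map $\vec u\mapsto(s_1u_1,\dots,s_nu_n)$. Maximizing over the scalings $s\in(\R^+)^n$ (locally) projects $\SH$ onto $M_n$, so strict minimality on $M_n$ is equivalent to positive definiteness of $I''$ on a complement of $\R^n\vec u_0$-type scaling directions $\{(t_1u_1,\dots,t_nu_n)\}$. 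Since the component-scaling directions at $\vec u_0$ are exactly $\R^nU$, Step 2 then finishes the proof.
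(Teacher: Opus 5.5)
Your Steps 1 and 2 are correct. Step 1 is the paper's rank computation. Step 2's bound $I''(\vec cU)(\vec\phi,\vec\phi)\ge(1-3/\omega_2)\|\vec\phi\|_\Omega^2$ on $(\R^nU)^\perp$ is also right; it does not even need $\det A\neq0$, since $Q_1(\chi)\ge(1-1/\omega_2)\|\chi\|_\Omega^2$ for $\chi\perp U$. The gap is Step 3, which is never carried out and whose premise is false. Your formula for $J_l'(\vec cU)\vec\psi$ has $U^2\psi_j$ where it should have $U^3\psi_j$. Differentiating $J_l$, and using $\sum_j a_{lj}c_j^2=1$ and $\langle U,\psi_l\rangle=\int_\Omega U^3\psi_l\,dx$, gives for every $\vec\psi\in\SH$
\[
J_l'(\vec cU)\vec\psi=2c_l\int_\Omega U^3\psi_l\,dx-2\sum_{j=1}^n a_{lj}\Big(c_lc_j^2\int_\Omega U^3\psi_l\,dx+c_l^2c_j\int_\Omega U^3\psi_j\,dx\Big)=-2c_l^2\sum_{j=1}^n a_{lj}c_j\int_\Omega U^3\psi_j\,dx .
\]
This is the paper's Riesz identification $J_l'(\vec cU)=-2c_l^2(a_{l1}c_1,\dots,a_{ln}c_n)U$. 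You could also read it off your own Step 2. Since $I'(\vec cU)=0$, we have $J_l'(\vec cU)\vec\psi=I''(\vec cU)\big((0,\dots,0,c_lU,0,\dots,0),\vec\psi\big)$, which is one of the cross terms you showed vanish on $(\R^nU)^\perp$. Hence $(\R^nU)^\perp\subseteq T$. Both are closed of codimension $n$, so $T=(\R^nU)^\perp$. Your claim that a general element of $T$ has a nonzero $\R^nU$-component is therefore wrong: there is no $A^{-1}$-term to absorb, and Step 2 already finishes the proof.

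As written, however, the proposal does not prove positivity of $I''$ on $T$. The Cauchy--Schwarz absorption is only announced, and it targets a spurious term. The fallback is also incorrect as stated. Along the fibre, $s\mapsto I(s\odot\vec cU)$ has Hessian $-2\big(\int_\Omega U^4\,dx\big)\,\mathrm{diag}(c_i^2)\,A\,\mathrm{diag}(c_i^2)$ at $s=(1,\dots,1)$. This is indefinite whenever $A$ has a negative eigenvalue, so $\vec cU$ is not a fibre maximum, and ``maximizing over the scalings'' does not define the projection you want. Moreover, ``positive definiteness on a complement of the scaling directions'' is the right criterion only for the $I''$-orthogonal complement. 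Identifying that complement with $T$ is precisely the missing observation above.

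Once fixed, your argument differs from the paper's only in the last step. The paper proves $I''>0$ on $T$ by Morse-index counting: the $k$-dimensional negative space spanned by $(\vec\eta_l\oslash\vec c)U$ lies in $\R^nU$ and is $I''$-orthogonal to $T$, so Theorem \ref{T1}(i),(iv) exclude a nonpositive direction in $T$. Your Step 2 instead gives the uniform coercive bound directly from $\omega_2>3$. That bound is what a strict local minimum in $\SH$ actually requires.
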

\begin{proof}
We first note that for $\vec{u} \in M_n$ and $m, l \in \{1,\cdots,n\},$
\[ J_m^\prime(\vec{u})(0,\cdots,0,u_l,0,\cdots,0) = -2a_{ml}\int_\Omega u_m^2 u_l^2dx.
\]
Thus, for $\vec{u} = (c_1,\cdots,c_n)U,$
we see that 
\[J_m^\prime(\vec{u})(0,\cdots,0,u_l,0,\cdots,0) = -2a_{ml}c_m^2c_l^2\int_\Omega U^4dx.
\]
Since $A=(a_{ij})_{1\le i,j\le n}$ is nonsingular, so is
$(a_{ml}c_m^2c_l^2)_{1\le m,l\le n}$.
Thus, there exists an open neighborhood $V$ of $(c_1,\cdots,c_n)U$ in $M_n$ such that
$V$ is a smooth manifold of codimension $n$ in $\SH.$ 
Note that for $1 \le m \le n$ and $\vec{\varphi} = (\varphi_1,\cdots,\varphi_n) \in \SH,$ 
\bea \label{rderi}& & J_m^\prime (\vec{c} U)(\vec{\varphi})  \nonumber  \\
&  &= 2  \int_{\Omega}c_m\nabla U \cdot \nabla \varphi_m + \lambda c_mU\varphi_m -\sum_{l=1}^n(a_{ml}c_l\varphi_l (c_m)^2+ a_{ml}(c_l)^2 c_m \varphi_m) U^3 dx  \nonumber\\
& & = -2 c_m^2\sum_{l=1}^n a_{ml}c_l\int_\Omega \varphi_l U^3 dx  \nonumber \\
& & = 
-2c_m^2\sum_{l=1}^n a_{ml}c_l \int_\Omega \nabla U\cdot \nabla \varphi_l +  \lambda U \varphi_l dx. \eea  
 Thus, we can identify by the Reisz representation theorem that
 \[ J_m^\prime(\vec{c}U) = -2c_m^2(a_{m1}c_1,\cdots,a_{mn}c_n)U \in \SH.\] 
 Since $(a_{ij}c_j)_{1\le i,j\le n}$ is nonsingular,
 this implies that the tangent space $T_{\vec{c}U}M_n$ is orthogonal to  $\vec{a}U$
 for any $\vec{a} \in \R^n.$
As in \eqref{twoderi}, we see that for any $\vec{\varphi} \in \SH,$
\[
I''(\vec{c}U)(\vec{\phi},\vec{\phi})
=\lVert\vec{\phi}\rVert_{\Omega}^2-\int_{\Omega}U^2\left(\sum_{i=1}^n \phi_i^2+\sum_{i,j=1}^n c_i\phi_i\cdot 2a_{ij}\cdot c_j\phi_j\right)dx,
\]
and that
for any $\vec{\varphi} \in \R^nU,$
\[
I''(\vec{c}U)(\vec{\phi},\vec{\phi})
= -\int_{\Omega}U^2\left(\sum_{i,j=1}^n c_i\phi_i\cdot 2a_{ij}\cdot c_j\phi_j\right)dx,
\]
Let $\mu_1 \ge \cdots \ge \mu_k > 0$ be the positive eigenvalues of $A$ with corresponding orthonormal eigenvectors 
$\vec{\eta}_1=(\eta_1^1,\cdots,\eta_n^1),\cdots,\vec{\eta}_k=(\eta_1^k,\cdots,\eta_n^k),$ respectively.
Then, we get that for $i \in \{1,\cdots,k\},$
\[I''(\vec{c}U)(\vec{\eta}_i \oslash \vec{c}  U,\vec{\eta}_i \oslash \vec{c}   U)
=-2\mu_i\int_{\Omega}U^4dx < 0.\]
Let $N$ be a $k$-dimensional subspace of $\SH$ spanned by $\vec{\eta}_i \oslash \vec{c}  U,
$ $i=1,\cdots,k.$
Then we see that
\[ I''(\vec{c}U)(\vec{\varphi},\vec{\varphi}) < 0 \ \ \textup{ for any } \vec{\varphi} \in N\setminus\{0\}.\]
Since a matrix $(a_{ij}c_j)_{1\le i,j\le n}$ is nonsingular, $\vec{\eta}_i \oslash \vec{c}  U$ should be in the linear span of 
\newline $\{(a_{k1}c_1,\cdots,a_{kn}c_n)U\}_{k=1}^n.$
Thus, for each $m =1,\cdots,k,$ $\vec{\eta}_m \oslash \vec{c}  U$ is orthogonal to 
the tangent space $T_{\vec{c}U}M_n$.
For any $ \vec{\psi} \in T_{\vec{c}U}M_n$ and $ \vec{\varphi}= \sum_{l=1}^k a_l\vec{\eta}_l \oslash \vec{c} U,$
we see that  
\brr
&& I''(\vec{c}U)(\vec{\varphi} +\vec{\psi},\vec{\varphi}+\vec{\psi})
\\
&& = \lVert\vec{\varphi} + \vec{\psi}\rVert_{\Omega}^2-\int_{\Omega}U^2\left(\sum_{i=1}^n (\varphi_i+\psi_i)^2+\sum_{i,j=1}^n c_i(\varphi_i+\psi_i)\cdot 2a_{ij}\cdot c_j(\varphi_j+\psi_j)\right)dx \\
&& = \lVert\vec{\psi}\rVert_{\Omega}^2-\int_{\Omega}U^2\left(\sum_{i=1}^n (\vec{\psi}_i)^2+ 2\sum_{i,j=1}^n c_i\psi_i\cdot a_{ij}\cdot c_j\psi_j\right)dx \\
&& \ \ \ -2\int_{\Omega}U^2\left(\sum_{i,j=1}^n c_i\varphi_i\cdot a_{ij}\cdot c_j \varphi_j 
 +2\sum_{i,j=1}^n c_i\varphi_i\cdot a_{ij}\cdot c_j \psi_j\right) dx\\
&& = I''(\vec{c}U)(\vec{\psi},\vec{\psi})  -2\int_{\Omega}U^2\left(\sum_{l=1}^k\sum_{i,j=1}^n (a_l)^2\eta^l_i\cdot a_{ij}\cdot \eta^l_j U^2
 + 2\sum_{l=1}^k\sum_{i,j=1}^n a_l\eta^l_i\cdot a_{ij}\cdot c_j \psi_j U\right) dx\\
 && = I''(\vec{c}U)(\vec{\psi},\vec{\psi})  -2\sum_{l=1}^k(a_l)^2\mu_l \int_\Omega U^4 dx  -4\int_{\Omega}U^3\sum_{l=1}^k\sum_{j=1}^n \mu_la_l\eta^l_j c_j \psi_j dx \\
&& = I''(\vec{c}U)(\vec{\psi},\vec{\psi})  -2\sum_{l=1}^k(a_l)^2\mu_l \int_\Omega U^4 dx  -4\sum_{l=1}^k\mu_la_l\sum_{j=1}^n\int_{\Omega} \nabla(\eta^l_j c_jU)\cdot \nabla \psi_j + \lambda \eta^l_j c_jU \psi_jdx.\err
Since the tangent space $T_{\vec{c}U}M_n$ is orthogonal to  $\vec{a}U$
 for any $\vec{a} \in \R^n,$ we see that for each $l =1,\cdots,k,$
 \[\sum_{j=1}^n\int_{\Omega} \nabla(\eta^l_j c_jU)\cdot \nabla \psi_j +  \lambda\eta^l_j c_jU \psi_jdx = 0.\]
Thus we see that $ \vec{\psi} \in T_{\vec{c}U}M_n$ and $ \vec{\varphi}= \sum_{l=1}^k a_l\vec{\eta}_l \oslash \vec{c} U,$
\begin{equation} \label{nonpositive} I''(\vec{c}U)(\vec{\varphi} +\vec{\psi},\vec{\varphi}+\vec{\psi}) = I''(\vec{c}U)(\vec{\psi},\vec{\psi})  -2\sum_{l=1}^k(a_l)^2\mu_l \int_\Omega U^4 dx.\end{equation}
By (iv) in Theorem \ref{T1}, we see that $I''(\vec{c}U)$ is an isomorphism on $\SH.$
Proposition 4.81 in \cite{JTSchwartz} says that there exists an isomorphism $T : \SH \to \SH$ and a projection $P$ on $\SH$ such that for any $\phi \in \SH,$ 
\[I''(\vec{c}U)(\vec{\phi},\vec{\phi}) = -\Vert PT\vec{\phi}\Vert +  \Vert (1-P)T\vec{\phi}\Vert.\]Since the Morse index of $\vec{c}U$ is $k,$  $P$ should be a projection to a $k$-dimensional space $G \subset \SH$.
This   implies that the maximal dimension of a subspace $K$ where $I''(\vec{c}U)(\vec{\phi},\vec{\phi}) < 0$ for $\vec{\phi} \in G \setminus\{0\}$ is $k.$
Then, we conclude from \eqref{nonpositive} that  
 \[I''(\vec{c}U)(\vec{\psi},\vec{\psi}) > 0 \ \ \textup{ for any  }  \ \  \vec{\psi} \in T_{\vec{c}U}M_n \setminus\{0\}.\]
This implies that $\vec{c}U$ is a strict local minimum of $I$ over $V.$
This completes the proof.
\end{proof}
For a perturbation result of Theorem \ref{V2},
we may consider a minimization of $\tilde{I}$ over
\[\tilde{M}_n \equiv \{ \vec{u}=(u_1,\cdots,u_n) \in \SH \ | \ \tilde{J}_i(\vec{u}) = 0, u_j \ne 0 \textup{ for }  1 \le i,j \le n  \},\]
where for each $l \in \{1,\cdots,n\}$ and $\vec{u} \in \SH,$ 
\[\tilde{J}_l(\vec{u}) \equiv \tilde{I}^\prime(\vec{u})(0,\cdots,0,u_l,0,\cdots,0).\]
Then, using the continuous dependence of $\tilde{I},\tilde{J}_1,\cdots,\tilde{J}_n$ and their derivatives for $\lambda_1,\cdots,\lambda_n > 0,$ 
we get the following result from Theorem \ref{V2} and the nondegeneracy result (iv) in Theorem \ref{T1}.
\begin{thm} \label{V2-1}
Let the interaction matrix $A=(a_{ij})_{1\leq i,j\leq n}$ satisfying $a_{ij}=a_{ji} \ge 0 $ for $1\leq i,j\leq n$ be nonsingular and $U$ is a nondegenerate solution of \eqref{singleeq} in $H$ with $\lambda > 0$.
Suppose that there exists a solution $\vec{\alpha}= (\alpha_1^2,\cdots,\alpha_n^2)$ with $\alpha_1,\cdots,\alpha_n > 0$ of $A\vec{\alpha} = \vec{1}_n.$
Then,  there exists $\delta > 0$ such that if $\max_{1\le i \le n}|\lambda_i-\lambda|\le \delta,$ there exist $(\alpha^\delta_1,\cdots,\alpha^\delta_n)U \in \tilde{M}_n$
 and an open neighborhood $\tilde{V}$ of $(\alpha^\delta_1,\cdots,\alpha^\delta_n)U$ in 
$\tilde{M}_n$ such that  $\tilde{V}$ is a smooth manifold of codimension $n$
$\tilde{I}$ has a strict local minimum point $\vec{u}$ in $\tilde{V} \subset \tilde{M}_n$
which is a positive vector solution of \eqref{eq1}.
\end{thm}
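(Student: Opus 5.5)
The plan is a perturbation argument around the synchronized solution $\vec{\alpha}U$ of the equal-frequency system, where $\vec{\alpha}=(\alpha_1,\cdots,\alpha_n)$ so that $\alpha_i^2$ is the $i$-th entry of the given solution of $A\vec{\alpha}=\vec{1}_n$. We combine the nondegeneracy statement (iv) of Theorem \ref{T1} with the local minimization structure of Theorem \ref{V2}.

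\textbf{Step 1: nondegeneracy at $\lambda_i=\lambda$.} Since $A$ is nonsingular and $U$ is nondegenerate in $H$, Theorem \ref{T1}(iv) shows that $\vec{\alpha}U$ is a nondegenerate critical point of $I$. Thus $I''(\vec{\alpha}U)$ is an isomorphism of $\SH$.

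\textbf{Step 2: continuation in the frequencies.} View $\tilde{I}=\tilde{I}_{\lambda_1,\cdots,\lambda_n}$ as depending smoothly on $(\lambda_1,\cdots,\lambda_n)$ in a neighborhood of $(\lambda,\cdots,\lambda)$. Apply the implicit function theorem to $\tilde{I}'(\vec{u})=0$ at the point $(\vec{\alpha}U,\lambda,\cdots,\lambda)$. This gives $\delta>0$ and a $C^1$ branch $\vec{u}_\delta$ of critical points of $\tilde{I}$, for $\max_i|\lambda_i-\lambda|\le\delta$, converging to $\vec{\alpha}U$ in $\SH$.

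Positivity of each component follows from three facts:
\begin{itemize}
\item each component is $H^1$-close to $\alpha_iU\neq0$, so it is nonzero;
\item $\vec{u}_\delta$ solves \eqref{eq1}, and the same holds for $|\vec{u}_\delta|$ when we take a nonnegative representative, or we argue via the Morse-type structure;
\item the strong maximum principle, applied after elliptic regularity gives $C^0_{loc}$ convergence together with uniform exponential decay if $\Omega=\R^N$.
\end{itemize}
The cleanest route is to note that $u_i^-$ is controlled by testing the $i$-th equation against $u_i^-$: we have $\|u_i^-\|^2\le C|\vec{u}|_4^2|u_i^-|_4^2$, so $u_i^-$ is either zero or bounded below in norm. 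Closeness to $\alpha_iU>0$ forces $u_i^-=0$. Set $(\alpha^\delta_1,\cdots,\alpha^\delta_n)U$ to be any convenient base point, for instance the synchronized point in $\tilde{M}_n$ nearest $\vec{u}_\delta$; alternatively, simply use $\vec{u}_\delta$ itself, since $\vec{u}_\delta\in\tilde{M}_n$ because $\tilde{J}_l(\vec{u}_\delta)=\tilde{I}'(\vec{u}_\delta)(0,\cdots,u_l,\cdots,0)=0$.

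\textbf{Step 3: manifold structure and strict local minimality.} This is the main obstacle, because Theorem \ref{V2} is stated only at the synchronized point. We will show that its two ingredients are open conditions.

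First, $\tilde{M}_n$ is a smooth codimension-$n$ manifold near $\vec{u}_\delta$. The matrix $\bigl(\tilde{J}_m'(\vec{u})(0,\cdots,u_l,\cdots,0)\bigr)_{m,l}=\bigl(-2a_{ml}\int u_m^2u_l^2\bigr)$ depends continuously on $\vec{u}$ in $\SH$ through the $L^4$ embedding. It is nonsingular at $\vec{\alpha}U$, hence also at all nearby $\vec{u}$. The regular value theorem then gives a neighborhood $\tilde{V}$ that is a manifold.

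Second, $\vec{u}_\delta$ is a strict local minimum. In the proof of Theorem \ref{V2} we obtained $I''(\vec{\alpha}U)(\vec{\psi},\vec{\psi})>0$ on $T_{\vec{\alpha}U}M_n\setminus\{0\}$. Since $I''(\vec{\alpha}U)$ is an isomorphism, this restriction is in fact coercive: $I''(\vec{\alpha}U)(\vec{\psi},\vec{\psi})\ge c\|\vec{\psi}\|^2$. To justify this, decompose the form as a compact perturbation of the identity. Positivity on a closed subspace, together with the absence of kernel there, yields coercivity by the standard weak-limit argument, using that $\vec{\psi}\mapsto\int U^2\psi_i\psi_j$ is weakly continuous (with the decay of $U$ when $\Omega=\R^N$).

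The tangent spaces $T_{\vec{u}}\tilde{M}_n=\bigcap_m\ker\tilde{J}_m'(\vec{u})$ vary continuously; their orthogonal projections converge in operator norm because the constraint differentials are $n$ linearly independent vectors depending continuously on the data. The second derivatives $\tilde{I}''(\vec{u}_\delta)$ and the Lagrange-multiplier correction terms also converge to the unperturbed ones. At $\vec{u}_\delta$ the multipliers vanish since $\tilde{I}'(\vec{u}_\delta)=0$, so the constrained Hessian is just $\tilde{I}''(\vec{u}_\delta)$ restricted to the tangent space.

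Coercivity therefore persists, with constant $c/2$, for $\delta$ small. A critical point of $\tilde{I}|_{\tilde{V}}$ with coercive constrained Hessian is a strict local minimum, which finishes the proof.
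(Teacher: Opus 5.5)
Your proposal follows the paper's route. The paper only asserts this result from the continuous dependence of $\tilde I,\tilde J_1,\cdots,\tilde J_n$ on the frequencies, together with Theorem \ref{V2} and Theorem \ref{T1}(iv). Your Steps 2--3 are a correct fleshing-out of exactly that: the implicit function theorem for the branch $\vec u_\delta$, openness of the rank condition on the constraints, and persistence of coercivity of the constrained Hessian, whose multiplier terms vanish at the free critical point $\vec u_\delta$.

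Two small repairs are needed.
\begin{itemize}
\item In the positivity step, the estimate $\|u_i^-\|^2\le C|\vec u|_4^2|u_i^-|_4^2$ yields nothing, because $|\vec u|_4$ is not small. The usable form is $\|u_i^-\|^2\le C\big(\int_{\{u_i<0\}}|\vec u|^4\,dx\big)^{1/2}\|u_i^-\|^2$. On $\{u_i<0\}$ one has $\alpha_iU\le|u_i-\alpha_iU|$, so this factor tends to $0$ and forces $u_i^-=0$.
\item The synchronized base point $\vec\alpha^\delta U\in\tilde M_n$ required by the statement is not $\vec u_\delta$ in general. It is obtained by solving $A(\vec\alpha^\delta\odot\vec\alpha^\delta)=\vec 1_n+\big((\lambda_l-\lambda)|U|_{2,\Omega}^2/|U|_{4,\Omega}^4\big)_{l=1}^n$. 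Since $A$ is nonsingular, this has a positive solution near $\vec\alpha\odot\vec\alpha$ for small $\delta$.
\end{itemize}
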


\begin{section}{\bf Synchronization/nonsynchronization of  positive vector solutions}
\subsection{Synchronization of  positive vector solutions}
Wei and Yao prove in  \cite{WY} that for $a_{12} > \max\{a_{11},a_{22}\},$ any positive vector solution of \eqref{eq3} with $n=2$ is synchronized.
For a case $n=3$ with $a_{11}=a_{22}=a_{33} = 0,$
if the following triangle inequality for $a_{12},a_{23},a_{31}$
\begin{equation} \label{triangle} a_{ij} <a_{jk} + a_{ki} \ \textup { for } \ \{i,j,k\}=\{1,2,3\} \end{equation}
and two among $\{a_{12}$, $a_{13}$, $a_{23}\}$ are same, it is proved  in  \cite[Proposition 7]{BKS} that any solution $\vec{u}$ of \eqref{eq3} is synchronized, that is, $\vec{u} \in \R^3U$ for a positive solution $U$ of \eqref{singleeq}.
In  both cases above for $n=2,3,$ the interaction matrix $A$ has only one positive eigenvalue.
In the following, we prove the synchronization for a general result for arbitrary $n \ge 2$ when the interaction matrix $A$ has only one positive eigenvalue.
\begin{thm}\label{SYN}
Assume that a symmetric matrix $A=(a_{ij})_{1\leq i,j\leq n}$ has exactly one positive eigenvalue and satisfies $a_{ij} \ge 0$ for any $1\leq i,j\leq n$, $\sum_{j=1}^na_{ij} > 0$ for each $i=1,\cdots,n.$
Then, any positive vector solution $\vec{u} =(u_1,\cdots,u_n)$ of \eqref{eq3}  is synchronized.
\end{thm}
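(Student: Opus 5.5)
The plan is to combine a Picone-type (first-eigenvalue) inequality for each scalar equation with the Lorentzian structure of a quadratic form having exactly one positive eigenvalue.

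\textbf{Step 1 (Picone inequalities).} Let $\vec{u}=(u_1,\cdots,u_n)$ be a positive vector solution and set $f_i \equiv \sum_{j} a_{ij}u_j^2$. Then $-\Delta u_i+\lambda u_i = f_i u_i$ with $u_i>0$. Hence $u_i$ is the positive first eigenfunction of $-\Delta+\lambda-f_i$ with eigenvalue $0$, so
\[\int_\Omega |\nabla \varphi|^2+\lambda\varphi^2-f_i\varphi^2\,dx\ge 0 \quad\text{for all } \varphi\in H,\]
with equality iff $\varphi\in\R u_i$. Testing with $\varphi=u_k$ and using the equation for $u_k$ gives, for all $i,k$,
\[\int_\Omega (f_k-f_i)\,u_k^2\,dx \ge 0, \qquad\text{with equality iff } u_k\in \R u_i. \]
Write $w(x)=(u_1^2(x),\cdots,u_n^2(x))$ and $M_{kl}=\int_\Omega u_k^2u_l^2\,dx$. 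The inequalities become $(AM)_{kk}\ge (AM)_{ik}$ for all $i,k$. More generally, for every probability vector $\beta$ we get $(AM)_{kk}\ge \sum_i\beta_i (AM)_{ik}$.

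\textbf{Step 2 (a summed inequality).} I would multiply by weights and sum so that the right-hand side becomes a quadratic form in $A$ evaluated on mixed pairs $w(x),w(y)$. The first choice to try is: weight the $(i,k)$ inequality by $\int u_i^2$-type factors, or equivalently integrate the pointwise quantity $w(x)^TAw(y)$ against suitable weights in $x$ and $y$. The target is an inequality of the form
\[\iint \Big( w(x)^TAw(y)\Big)\,\rho(x,y)\,dx\,dy\ \le\ \iint \big(w(x)^TAw(x)\big)^{1/2}\big(w(y)^TAw(y)\big)^{1/2}\rho(x,y)\,dx\,dy.\]
Here $\rho\ge0$, and the natural candidate comes from the Rayleigh quotients $\int f_iu_k^2/\int u_k^2$.

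\textbf{Step 3 (reverse Cauchy--Schwarz).} Since $A$ has exactly one positive eigenvalue, the form $\langle x,y\rangle_A=x^TAy$ is Lorentzian. Because $a_{ij}\ge0$, every nonnegative vector $w$ lies in the closed ``future cone'' ($w^TAw\ge0$), and two such vectors satisfy $w^TAw'\ge0$. On this cone the reverse Cauchy--Schwarz inequality holds:
\[\big(w(x)^TAw(y)\big)^2\ \ge\ \big(w(x)^TAw(x)\big)\big(w(y)^TAw(y)\big).\]
The condition $\sum_j a_{ij}>0$ guarantees $Aw(x)\ne0$ wherever $w(x)\ne 0$, i.e.\ all $u_j$ do not vanish simultaneously. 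This is the condition needed to make the equality case rigid: equality forces $w(x)\parallel w(y)$, possibly modulo the kernel of $A$, which I would handle separately.

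Combining this with Step 2 should force equality almost everywhere. Then $w(x)$ has a fixed direction, so $u_i^2=c_i^2\,g$ for some function $g$. Equivalently, equality in the Picone inequalities directly gives $u_k\in\R u_i$. Plugging back, $u_i=c_iV$ with $V$ solving \eqref{singleeq} after normalizing through $A(c_1^2,\cdots,c_n^2)^T=\vec 1$, which is exactly synchronization.

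\textbf{Main obstacle.} I expect the main obstacle to be Step 2: choosing the weights so that the Picone inequalities, which point in the direction ``diagonal $\ge$ off-diagonal'', genuinely combine against the reverse Cauchy--Schwarz inequality, which says ``off-diagonal $\ge$ diagonal''. I would first try the symmetric choice, summing the $(i,k)$ inequality with weight $\int f_i u_k^2$-independent coefficients $p_i$, where $p$ is the Perron eigenvector of $A$. If that fails, I would instead use the decomposition $A=\mu_1 pp^T-N$ with $N\ge0$, and show that $\operatorname{tr}(NM)$ must vanish on the relevant subspace.
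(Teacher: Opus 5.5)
Your Step 1 is correct and is the paper's starting point: testing the $i$-th equation with $u_k^2/u_i$ gives $(AM)_{kk}-(AM)_{ik}=\int_\Omega u_i^2|\nabla(u_k/u_i)|^2\,dx\ge 0$, with equality iff $u_k\in\R u_i$. The gap is Step 2, which is the whole remaining content of the proof. You never produce the combination, and the concrete one you propose fails.

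Normalize the Perron vector $p$ of $A$ by $\sum_i p_i=1$. Summing the Picone inequalities against $p$ gives $\operatorname{tr}(AM)\ge \mu_1\int_\Omega (p\cdot w)(\mathbf{1}\cdot w)\,dx$. Reverse Cauchy--Schwarz against $p$ only gives $w^TAw\le \mu_1(p\cdot w)^2/|p|^2$. Closing the chain requires $p_i\le |p|^2$ for every $i$. Since $|p|^2\le\max_i p_i$, this forces $p$ to be constant on its support; for irreducible $A$ that means constant row sums.

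The variant $A=\mu_1\hat p\hat p^T-N$ hits the same obstruction. It gives $\operatorname{tr}(AM)\le \hat p^TAM\hat p$. But the weights $\hat p_i\hat p_k$ have column sums $p_k\|p\|_1/|p|^2$, which exceed $1$ somewhere unless $p$ is again constant on its support. So the Picone relations do not bound $\hat p^TAM\hat p$ by $\operatorname{tr}(AM)$. Also, your Step 2 target with mixed pairs $w(x),w(y)$ cannot arise from linear combinations of Picone relations, which only involve the single-point moments $(AM)_{ik}$.

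The missing idea in the paper is to use weights adapted to the solution. Put $F=AM$. Its entries are positive (this is where $\sum_j a_{ij}>0$ is used), so its Perron root $\omega>0$ has a positive left eigenvector $d$ with $\sum_i d_i=1$. It must be the left one, because Picone makes $F_{kk}$ dominate its column. Weighting $F_{kk}\ge F_{ik}$ by $d_i$ gives $\operatorname{tr}F-\omega=\sum_{i,k}d_i(F_{kk}-F_{ik})\ge 0$. Conversely, the nonzero eigenvalues of $F=(AM^{1/2})M^{1/2}$ are those of the symmetric matrix $M^{1/2}AM^{1/2}$, which has at most one positive eigenvalue because $A$ does. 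Hence $\operatorname{tr}F\le\omega$, all Picone inequalities are equalities, and $u_k/u_i$ is constant.

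Your Lorentzian picture can also be completed, but not with linear weights drawn from the spectrum of $A$. One way is to multiply rather than add the Picone inequalities. From $F_{ik}F_{ki}\le F_{ii}F_{kk}$ you get $\operatorname{tr}(F^2)\le(\operatorname{tr}F)^2$. On the other hand, $\operatorname{tr}(F^2)=\iint (w(x)^TAw(y))^2\,dx\,dy\ge(\operatorname{tr}F)^2$ by your pointwise reverse Cauchy--Schwarz; this is your Step 3 with $\rho\equiv 1$, squared. Equality, together with $F_{kk}=\|u_k\|_\Omega^2>0$, forces $F_{ik}=F_{kk}$.

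Another way is to take $\beta$ maximizing $\beta^TA\beta$ on the simplex. The KKT conditions give $A\beta\le m\mathbf{1}$ with $m=\beta^TA\beta>0$. Hence pointwise $w^TAw\le(\beta^TAw)^2/m\le(\beta^TAw)(\mathbf{1}\cdot w)$. Integrating gives $\operatorname{tr}(AM)\le\beta^TAM\mathbf{1}$, which is the reverse of the Picone inequalities weighted by $\beta$.
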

\begin{proof}
Suppose that there exists a positive vector solution
$u=(u_1,\cdots,u_n)$ of \eqref{eq3}.
For $1\leq i,j\leq n$, since $u_i^2/u_j\in H$, testing the $j$-th equation of \eqref{eq3} by $u_i^2/u_j$, then subtracting the $i$-th equation of \eqref{eq3} tested by $u_i$, we obtain
\begin{equation}\label{picid}
\int_{\Omega}u_j^2\bigg|\nabla\bigg(\frac{u_i}{u_j}\bigg)\bigg|^2dx=\int_{\Omega}\sum_{k=1}^n (a_{ki}u_k^2-a_{kj}u_k^2)u_i^2dx.
\end{equation}
We define a symmetric matrix $G=(G_{ij})_{1\le i,j\le n}$ by
$G_{ij} = \int_{\Omega} u_i^2u_j^2 dx.$
Then, it is obvious that $G_{ij} > 0$ and
$G$ is nonnegative definite, that is, all eigenvalues of $G$ are nonnegative.
We define a matrix $F= AG = (F_{ij})_{1\le i,j\le n}.$
Note that $F_{ij} >  0$ for $1\le i,j\le n$.
Then the Perron-Frobenius Theorem implies that the largest positive eigenvalue $\omega > 0$ of $F$ has an eigenvector $\vec{d} = (d_1,\cdots,d_n)$ with $d_1,\cdots,d_n >  0$ and $\sum_{i}^n d_i =1.$
We see from \eqref{picid} that
\begin{equation}\label{picid1}
0 \le \int_{\Omega}u_j^2\bigg|\nabla\bigg(\frac{u_i}{u_j}\bigg)\bigg|^2dx=  (AG)_{ii} -(AG)_{ji} =  F_{ii} - F_{ji}.
\end{equation}
Then, it follows  from \eqref{picid1} that
\begin{equation}
\label{trinequal1}\text{tr} F - \omega = \sum_{i=1}^n F_{ii} - \sum_{i=1}^n (F\vec{d})_{i} =
\sum_{i,j}d_j(F_{ii} - F_{ij}) \ge 0. 
\end{equation}
On the other hand, since the nonzero eigenvalues of $F= AG= (AG^{1/2})G^{1/2}$ 
are same with those of a symmetric matrix $G^{1/2}	AG^{1/2}$ and the number of positive eigenvalue of the symmetric matrix $G^{1/2}AG^{1/2}$ is less than or equal to that of $A,$
 we deduce that $F$ has at most one positive eigenvalue.
This implies that $tr F \le \omega$; this and \eqref{trinequal1} imply that
$\text{tr} F = \omega.$
Applying the property $\text{tr} F = \omega$ to \eqref{picid1} and \eqref{trinequal1}, we see that $F_{ii}-F_{ij}= 0$ for any $1\le i,j\le n.$
Thus we see from \eqref{picid1} that $u_i/u_j$ is a constant for any $1\le i,j\le n.$
This completes the proof.
\end{proof}
The above general result gives us the following nonexistence result, which extends some previous nonexistence result for $\lambda_1=\cdots=\lambda_n$ and $n=2,3$ in \cite{BW},\cite{BKS},\cite{WY}.

\begin{cor}\label{non}
Suppose that a symmetric matrix $A=(a_{ij})_{1\leq i,j\leq n}$ has exactly one positive eigenvalue and satisfies $a_{ij} \ge 0$ for any $1\leq i,j\leq n$, $\sum_{j=1}^na_{ij} > 0$ for each $i=1,\cdots,n.$
Then, \eqref{eq3} has no positive vector solutions if there exists no vector $\vec{c}=(c_1,\cdots,c_n) \in \R^n$ satisfying $\sum_{j}a_{ij}c_j^2 =1 $ and $c_i \ne 0$ for each $i =1,\cdots,n.$
\end{cor}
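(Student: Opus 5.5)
The plan is to argue by contraposition, with Theorem \ref{SYN} doing almost all of the work. Suppose \eqref{eq3} has a positive vector solution $\vec{u}=(u_1,\cdots,u_n)$. The hypotheses on $A$ are exactly those of Theorem \ref{SYN}: exactly one positive eigenvalue, $a_{ij}\ge 0$, and $\sum_j a_{ij}>0$ for each $i$. Hence $\vec{u}$ is synchronized. That is, $u_i/u_j$ is a positive constant for all $i,j$, so $\vec{u}=(c_1,\cdots,c_n)w$ for some positive function $w\in H$ and constants $c_i>0$. We may normalize $w$, for instance by taking $w=u_1$ and $c_1=1$.

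Next I substitute this form into \eqref{eq3}. The $i$-th equation becomes
\[
c_i(-\Delta w+\lambda w)=c_i\Big(\sum_{j=1}^n a_{ij}c_j^2\Big)w^3 .
\]
Dividing by $c_i>0$ gives $-\Delta w+\lambda w=s_i w^3$, where $s_i\equiv\sum_j a_{ij}c_j^2$. Subtracting two of these equations yields $(s_i-s_j)w^3=0$, and since $w>0$ we get $s_i=s_j=:s$ for all $i,j$. Because $\sum_j a_{ij}>0$, each row of $A$ has some positive entry, so $s_i>0$; hence $s>0$.

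Finally I rescale. Put $U=\sqrt{s}\,w$, which solves \eqref{singleeq}, and $\tilde c_i=c_i/\sqrt{s}$. Then $\vec{u}=\vec{\tilde c}\,U$ and $\sum_j a_{ij}\tilde c_j^2=s/s=1$ for every $i$, with $\tilde c_i\neq 0$. This contradicts the assumption that no such vector exists, so \eqref{eq3} has no positive vector solutions.

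I expect no real obstacle here. The only points needing care are confirming that the constant $s$ is strictly positive, which follows from the row-sum assumption together with $c_j>0$, and noting that synchronization, as proved in Theorem \ref{SYN}, means proportionality by positive constants rather than membership in $\R^nU$ for a prescribed least energy $U$. The rescaling step above handles the second point.
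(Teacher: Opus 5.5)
Your proof is correct and follows the argument the paper intends. The paper gives no separate proof: it treats the corollary as an immediate consequence of Theorem \ref{SYN}, together with the introduction's observation that a synchronized positive vector solution $\vec{c}U$ exists exactly when \eqref{liposol} has a solution with all $c_i>0$. Your rescaling $U=\sqrt{s}\,w$ simply spells out the step from ``$u_i/u_j$ is constant,'' which is what the proof of Theorem \ref{SYN} actually yields, to membership in $\R^nU$.
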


When $a_{ij} > 0$ for any $1\le i,j \le n$ and $\Omega=\R^N,$ any positive vector solution of \eqref{eq3} is radially symmetric up to a translation.
Thus, we have the following uniqueness result.   
\begin{cor} Let $\Omega$ be a domain such that the scalar equation \eqref{singleeq} has a unique positive solution (up to a translation when $\Omega =\mathbb R^N$).
Suppose that $A$ has only one positive eigenvalue and all entries are positive.
Then, \eqref{eq3} has a unique positive vector solution (up to a translation when $\Omega =\mathbb R^N$) if there exists a unique vector $\vec{c}=(c_1,\cdots,c_n) \in \R^n$ satisfying $\sum_{j}a_{ij}c_j^2 =1 $ and $c_i \ne 0$ for each $i =1,\cdots,n$. 
\end{cor}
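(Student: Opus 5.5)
The plan is to reduce to Theorem \ref{SYN} and then to the uniqueness of the scalar problem. Let $\vec{u}=(u_1,\cdots,u_n)$ be any positive vector solution of \eqref{eq3}. All entries of $A$ are positive, so $\sum_j a_{ij}>0$ for every $i$, and $A$ has exactly one positive eigenvalue by hypothesis. Theorem \ref{SYN} therefore gives $u_i/u_j\equiv$ const for all $i,j$. Hence $\vec{u}=(d_1,\cdots,d_n)W$ for a single positive function $W\in H$ and constants $d_i>0$.

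Next we normalize. Substituting into \eqref{eq3} gives
\[
-\Delta W+\lambda W=\Big(\sum_j a_{ij}d_j^2\Big)W^3 \quad\text{for each } i .
\]
Since $W>0$, the coefficient $s\equiv\sum_j a_{ij}d_j^2$ is the same positive number for all $i$; it is positive because all $a_{ij}>0$. Setting $V=\sqrt{s}\,W$, we see that $V$ is a positive solution of \eqref{singleeq}. By the scalar uniqueness hypothesis, $V=U$, up to a translation when $\Omega=\R^N$. For $\Omega=\R^N$ we also need that positive solutions are radial up to translation, as remarked before the corollary, so that they lie in the radial space $H$ after translation.

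Thus $\vec{u}=(c_1,\cdots,c_n)U$ with $c_i=d_i/\sqrt{s}>0$, and \eqref{Ac=1} holds: $\sum_j a_{ij}c_j^2=1$. The vector $(c_1^2,\cdots,c_n^2)$ is then determined by the uniqueness assumption on solutions of $\sum_j a_{ij}c_j^2=1$ with $c_i\neq0$. Combined with $c_i>0$, this fixes $\vec{c}$, so $\vec{u}$ is unique. Existence follows because the unique $\vec{c}$ (taken with positive entries) gives the solution $\vec{c}U$.

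The main subtle point is the reading of the uniqueness hypothesis on $\vec{c}$ up to signs. We interpret it as uniqueness of $(c_1^2,\cdots,c_n^2)$, equivalently uniqueness among positive vectors. It is automatic when $\det A\neq0$, and under this interpretation the argument is complete. The remaining ingredient to handle with care is the translation issue on $\R^N$, which is covered by the radial-symmetry remark.
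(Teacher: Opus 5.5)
Your proposal is correct and follows the argument the paper intends; the paper gives no separate proof beyond the remark on radial symmetry. That argument is synchronization via Theorem \ref{SYN}, rescaling the common profile to the unique positive solution $U$ of \eqref{singleeq}, and then uniqueness of $(c_1^2,\cdots,c_n^2)$ in \eqref{Ac=1}. Reading the uniqueness hypothesis modulo signs is the right interpretation, since taken literally it can never hold: every sign choice $(\pm c_1,\cdots,\pm c_n)$ solves the same system.
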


\end{section}
\subsection{Non-synchronization of positive vector solutions }
We now prove a non-synchronization result for a class $\mathscr{A}$ of nonsingular matrices $A$ with at least two positive eigenvalues and some other generic properties.
\begin{definition} We say $A=(a_{ij})_{1\leq i,j\leq n}\in\mathscr{A}$ if the following conditions hold.
\begin{enumerate}
\label{admissible}
    \item [(i)] $a_{ij}=a_{ji}>0$.
    \item [(ii)] There exist $c_1,\cdots,c_n>0$ satisfying $\sum_{j=1}a_{ij}c_j^2 = 1$ for each $i=1,\cdots,n$.
    \item [(iii)] $A$ has at least two positive eigenvalues.
    \item [(iv)] All principal submatrices of $A$, including $A$, are nonsingular.
    \item[(v)] All nonnegative synchronized solutions of \eqref{eq3} on a ball are non-degenerate. (In the view of Theorem \ref{T1} and \ref{T1-1}, it is equivalent to that for any nonnegative synchronized solution $\vec{d}=(d_1,\cdots,d_n)U$
    of \eqref{eq3}, 
    $\sum_{j=1}^n a_{ij}d_j^2\notin\{\omega_1,\omega_2,\cdots\}$ for all $i$ satisfying $d_i=0$, where $\{\omega_l\}_{l=1}^\infty$ are the eigenvalues of \eqref{speceq}.
\end{enumerate}
\end{definition}
 The conditions $(i)-(iii)$ in the class $\mathscr{A}$ are  generic in the sense that if there is an element $A=(a_{ij})_{1\leq i,j\leq n}$ satisfies $(i)$, $(ii)$, and $(iii)$, then for any small neighborhood $V$ of $A$ in the class of symmetric matrices, $V\cap \mathscr{A}$ is dense in $V$, which can be proved by a standard argument.
For $a \in \R^N, r > 0$, we define $B_r(a)= \{x \in \R^N \ | \ |x-a| < r\}.$
We denote $e_N = (0,\cdots,0,1) \in \R^N.$
\begin{thm}\label{NSYN}
Let $N=2,3$. For an interaction matrix $A=(a_{ij})_{1\le i,j\le n}$, we assume $A\in\mathscr{A}$.
Then, there exists a  domain $\Omega\subset\R^N$ such that  \eqref{eq3}  has a nonsynchronized positive vector solution.
\end{thm}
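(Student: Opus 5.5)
The plan is to build $\Omega$ as a dumbbell: two disjoint balls joined by a thin channel. On the disjoint union we glue two \emph{different} nondegenerate synchronized pieces. We then use the nondegeneracy in $(iv)$–$(v)$ of Definition \ref{admissible}, with Theorems \ref{T1} and \ref{T1-1}, to continue the glued solution to the connected dumbbell.

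First, fix two disjoint balls $B^1=B_1(0)$ and $B^2=B_1(3e_N)$, and let $U_1,U_2$ be the unique positive radial solutions of \eqref{singleeq} on them. On $\Omega_0=B^1\cup B^2$ define
\[
\vec u_0=\vec c\,U_1 \ \text{on } B^1, \qquad \vec u_0=\vec d\,U_2 \ \text{on } B^2 .
\]
Here $\vec c$ is the positive solution of \eqref{Ac=1} given by $(ii)$, which is unique by $(iv)$. The vector $\vec d\neq\vec c$ is a \emph{nonnegative} synchronized datum on a proper index set $S$, i.e.\ $A_S(d_i^2)_{i\in S}=\vec 1$ and $d_i=0$ for $i\notin S$; the simplest choice is $S=\{1\}$, $d_1=a_{11}^{-1/2}$. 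By Theorem \ref{T1}(iv) together with $(iv)$, and by Theorem \ref{T1-1} together with $(v)$, both pieces are nondegenerate in $H_0^1$ of their ball. Since the linearization on $\Omega_0$ decouples over the two components, $\vec u_0$ is a nondegenerate solution of \eqref{eq3} on $\Omega_0$.

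Second, we pass to the dumbbell $\Omega_\varepsilon=B^1\cup B^2\cup C_\varepsilon$, where $C_\varepsilon$ is a cylinder of radius $\varepsilon$ along the $e_N$ axis. For $N=2,3$ the capacity of $C_\varepsilon$ tends to zero, so $H_0^1(\Omega_\varepsilon)\to H_0^1(\Omega_0)$ in the sense of Dancer's domain perturbation theory. A Lyapunov–Schmidt (or degree/implicit function) argument, of the same type as the contraction in the proof of Theorem \ref{B2}, then gives a solution $\vec u_\varepsilon$ of \eqref{eq3} on $\Omega_\varepsilon$ with $\vec u_\varepsilon\to\vec u_0$ in $\SH$ and in $L^\infty_{loc}$ away from the channel. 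To obtain nonnegativity we run this construction for the truncated system with nonlinearity $\sum_j a_{ij}(u_i^+)(u_j^+)^2$. Its solutions have the same linearization at $\vec u_0$ on the set where $\vec u_0>0$; at the vanishing components the linearized operator is still governed by $b_{kk}\notin\{\omega_l\}$. Testing the $i$-th truncated equation with $u_i^-$ gives $u_i\ge0$. Each $u_i$ is nontrivial, because it is close to $c_iU_1>0$ on $B^1$. It satisfies $-\Delta u_i+\lambda u_i=(\sum_j a_{ij}u_j^2)u_i$ with a nonnegative potential on the connected set $\Omega_\varepsilon$, so the strong maximum principle gives $u_i>0$. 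Hence $\vec u_\varepsilon$ is a positive vector solution.

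Third, we show non-synchronization. If $\vec u_\varepsilon=\vec e\,U$ for some positive scalar solution $U$ on $\Omega_\varepsilon$, then $u_1/u_2\equiv e_1/e_2$. But on $B^1$ this ratio is close to $c_1/c_2$, while on $B^2$ it is close to $d_1/d_2=+\infty$ when $d_2=0$ (or close to $d_1/d_2\neq c_1/c_2$ for another choice of $S$). This is a contradiction for small $\varepsilon$.

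I expect the main obstacle to be the second step: the uniform invertibility of the linearized operator on $\Omega_\varepsilon$, i.e.\ excluding kernel elements that concentrate in the channel. I would first try to handle this through the spectral convergence of $-\Delta+\lambda-V$ under vanishing-capacity perturbations in $N\le3$, using the $L^\infty$ bound on $\vec u_\varepsilon$. I would also check, through the formula of Theorem \ref{T1-1}, whether condition $(iii)$ (at least two positive eigenvalues) is what guarantees an admissible nondegenerate semi-vector piece $\vec d$, e.g.\ $b_{kk}=a_{k1}/a_{11}\notin\{\omega_l\}$. This condition must enter somewhere, since Theorem \ref{SYN} forbids this construction when $A$ has only one positive eigenvalue. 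If the single-component choice fails, I would instead choose $S$ so that $\vec d\,U_2$ has Morse index compatible with a positive limit, and adjust accordingly.
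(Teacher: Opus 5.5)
There is a genuine gap, and it is where you suspected hypothesis (iii) must enter: the choice of the semi-vector piece $\vec d$. Positivity does not need nondegeneracy $b_{kk}\notin\{\omega_l\}$. It needs $b_{kk}=\sum_{j\in S}a_{kj}d_j^2<\omega_1=1$ for every $k\notin S$.

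To see this, suppose $b_{kk}>1$ and $\vec u_\varepsilon\to W$ is a positive solution on $\Omega_\varepsilon$. Then $u_{k,\varepsilon}>0$ solves $L_{k,\varepsilon}u_{k,\varepsilon}=0$ with $L_{k,\varepsilon}=-\Delta+\lambda-\sum_ja_{kj}u_{j,\varepsilon}^2$. So $0$ is the principal eigenvalue of $L_{k,\varepsilon}$ on $\Omega_\varepsilon$. But testing with $U_2\in H_0^1(B^2)$ gives $\langle L_{k,\varepsilon}U_2,U_2\rangle\to(1-b_{kk})\int U_2^4<0$, a contradiction.

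Your truncation does not get around this. Every solution of the truncated system is nonnegative by your $u_i^-$ test, hence a positive solution of \eqref{eq3} by your maximum-principle argument. So when some $b_{kk}>1$, the truncated system has no solution near $\vec u_0$ at all. The false step is the claim that the truncated problem has ``the same linearization'' at the vanishing components. The term $u_k^+(u_j^+)^2$ is not differentiable at $u_k=0$, $u_j>0$; it is only positively homogeneous there. So there is nothing to run a contraction or implicit-function argument on. The relevant local index is $1$ when $b_{kk}<1$ and $0$ when $b_{kk}>1$ (compare $u\mapsto u-bu^+$ on $\mathbb{R}$).

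Your default $S=\{1\}$ gives $b_{kk}=a_{k1}/a_{11}$, which can exceed $1$. Take $n=4$, $a_{ii}=1$, $a_{12}=a_{34}=2$, and all other off-diagonal entries equal to a small generic $\delta>0$. Then $A\in\mathscr{A}$ (its eigenvalues are $3\pm2\delta,-1,-1$), and every single-index choice produces some $b_{kk}=2$.

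The missing idea is the paper's choice of $\vec d$: maximize $\vec x^TA\vec x$ over the simplex $\{x_i\ge0,\ \sum_i x_i=1\}$. By (iii) there is $v\perp\vec 1$, $v\neq0$, with $v^TAv>0$, since a two-dimensional positive subspace of $A$ meets $\vec 1^{\perp}$. Hence the maximizer cannot be interior, and its support $S$ is proper. Setting $d_i=\sqrt{x_i/\vec x^TA\vec x}$, the Kuhn--Tucker conditions give $\sum_ja_{ij}d_j^2=1$ on $S$ and $\le1$ off $S$. The inequality is strict by (v), because $\omega_1=1$.

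With this $\vec d$, your truncated route can be completed by a degree computation, since the vanishing directions now have index $1$; that would be a genuinely different positivity proof. The paper instead keeps the original system and obtains $\vec u_\varepsilon$ by a Leray--Schauder homotopy from $\Omega_0$. That step needs only compactness, so uniform invertibility in the channel is not the real obstacle. It then shows the principal eigenvalue of each $L_{i,\varepsilon}$ is $0$. It uses $b_{kk}<1$ to identify the limit of principal eigenfunctions as $\tilde c_iU_0+\tilde d_iU_1$, which contradicts orthogonality to $u_{i,\varepsilon}$.

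A minor point: for $N=2$ the capacity of a thin strip does not tend to zero, because a segment has positive capacity in $\mathbb{R}^2$. The domain convergence you need follows instead from $\Omega_0\subset\Omega_\varepsilon$, $|\Omega_\varepsilon\setminus\Omega_0|\to0$, and the Lipschitz regularity of $\Omega_0$.
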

\begin{proof}
\textit{Step 1: Construction of a nonsyncrhonized solution $\vec{u}_{\e}$.} First of all, $A$ having at least two positive eigenvalues implies that the vector $\vec{x}=(x_1,\cdots,x_n)$ maximizing $\vec{x}^TA\vec{x}$ on
$$
\{\vec{x}:x_1+\cdots+x_n=1\ \mathrm{and}\ x_1,\cdots,x_n\geq0\}
$$
is not a positive vector. Fix such vector $\vec{x}$ having the most positive number of entries. By changing the coordinates if necessary, we let $\vec{x}=(x_1,\cdots,x_m,0,\cdots,0)$ for some $m<n$. Let $d_i=\sqrt{x_i/(\vec{x}^TA\vec{x})}$ for $i=1,\cdots,m$ and $d_i=0$ for $i=m+1,\cdots,n$. Then,
$$
\sum_{j=1}^m a_{ij}d_j^2=1
$$
for $i=1,\cdots,m$ and
$$
\sum_{j=1}^m a_{ij}d_j^2\leq 1
$$
for $i=m+1,\cdots,n$.
In particular, $\vec{d}U:=(d_1,\cdots,d_n)U$ is a solution of \eqref{eq3}, and the nondegeneracy result in Theorem \ref{T1-1} implies
$$
\sum_{j=1}^m a_{ij}d_j^2<1
$$
for $i=m+1,\cdots,n$
since the first eigenvalue of \eqref{speceq} is $1.$

Let $\Omega_0=B_1(0)\cup  B_1(3e_N)$. For small $\e > 0$, let $\Omega_\e$ be a domain with a smooth boundary such that
\[ \Omega_0 \subset \Omega_\e \subset  B_5(0),\quad\lim_{\e \to 0} |\Omega_\e \setminus \Omega_0| = 0.\]
Now, we set up to define compact maps of which we will calculate the Leray-Schauder degree. 
We define a restriction map $R_O:L^2(B_5(0))\rightarrow L^2(O)$ by $R_O(u)=u|_{O}$, and define a map $E_O:H^1_0(O)\rightarrow L^2(B_5(0))$ by the composition of the zero extension to $B_5(0)$ and the compact embedding $H_0^1(B_5(0)) \hookrightarrow L^2(B_5(0))$. 
We also define $(-\Delta+ \lambda)^{-1}_O:H^{-1}(O)\rightarrow H^1_0(O)$ by $f\mapsto u$ where $u\in H^1_0(O)$ is the unique solution of
$$
-\Delta u+ \lambda u = f \quad\mathrm{in}\quad O,  \ \ u = 0 \text{ on } \partial O
$$
for a given $f\in H^{-1}(O)$. 
Then, we define $K=(K_1,\cdots,K_n) : (L^2(B_5(0)))^n\rightarrow(L^2(B_5(0)))^n$ by
$$
K_i(u_1,\cdots,u_n)=E_{\Omega_0}\circ (-\Delta+ \lambda)^{-1}_{\Omega_0} \circ R_{\Omega_0}\big(\sum_{j=1}^n a_{ij}u_iu_j^2\big),
$$
and define $K_{\e}=(K_{1,\e},\cdots,K_{n,\e}) :(L^2(B_5(0)))^n\rightarrow(L^2(B_5(0)))^n$ by 
$$
K_{i,\e}(u_1,\cdots,u_n)=E_{\Omega_{\e}} \circ (-\Delta+ \lambda)^{-1}_{\Omega_{\e}}\circ R_{\Omega_{\e}}\big(\sum_{j=1}^n a_{ij}u_iu_j^2\big).
$$
We note that $K$ and $K_{\e}$ are compact due to the compactness of $E_{\Omega_0}.$ Now, we define $W=(w_1,\cdots,w_n)\in(H^1(B_5(0)))^{n}$ by
$$
W=\begin{cases}
    (c_1,\cdots,c_n)U_0\ \mathrm{in}\ B_1(0)\\
    \vec{d}U_1\ \mathrm{in}\ B_1(3e_N)\\
    0\ \mathrm{otherwise}
\end{cases},
$$
where $U_0$ is the solution of \eqref{singleeq} in $B_1(0)$ and $U_1$ is the solution of \eqref{singleeq} in $B_1(3e_N)$. 
 We define $N_R(W) =\{w\in(L^2(B_5(0)))^n : \lVert w-W\rVert_{(L^2(B_5(0)))^n}<R\}$.
 By the nondegeneracy results $(iv)$ of Theorem \ref{T1} and Theorem \ref{T1-1}, there is a small $R>0$ 
 such that $W$ is the only zero of $I-K$ in  $N_R(W)$ and the Leray-Schauder degree $\deg(I-K,N_R(W),(0,\cdots,0))$ is nonzero.

We claim that for small $\e>0$,
\[(I-(1-t)K_{\e}-tK)(u_1,\cdots,u_n)) \ne 0, \ \ t \in [0,1], \ \ (u_1,\cdots,u_n) \in \partial N_R(W).\]
To the contrary, suppose that there exist sequences $\{\e_n\}\subset(0,\infty)$, $\{t_n\}\subset[0,1]$, and $\{\vec{u}_n\}\subset\partial N_R(W)$ such that $\e_n\rightarrow 0$ and $(I-(1-t_n)K_{\e_n}-t_nK)(\vec{u}_n)=0$. Then, we see from the compactness of the map $E_{\Omega_0}$ and $E_{\Omega_\e}$ that $\vec{u}_n$ converges, up to a subsequence, to an element $\vec{u} \in L^2(B_5(0))$.
 Then, we see from the fact $\lim_{ n\to \infty}\e_n = 0$ that as $n \to \infty,$
 $K_{\e_n}(\vec{u}_n) \to K(\vec{u}) $ in $(L^2(B_5(0)))^n.$
This implies that $(I-K)(\vec{u}) = 0$ and $\vec{u} \in  \partial N_R(W)$;
this is a contradiction and proves the claim.

 Since $(1-t)(I-K_{\e})+t(I-K)$ is nonzero on $\partial N_R(W)$ for any $t \in [0,1]$ and small $\e>0$, we see from the homotopy invariance of the Leray Schauder degree that for small $\e>0$ and $\delta\geq0$,
$$
\deg(I-K_{\e},N_R(W),(0,0))=\deg(I-K,N_R(W),(0,0))\neq0.
$$
Therefore, for small $\e>0$, there exists a solution $\vec{u}_{\e}$ of \eqref{eq3} in $\Omega_{\e}$ which converges to $W$ in $(H^1_0(B_5(0)))^n$ as $\e\to0$. For small $\e>0$, such solutions must be nonsynchronized by the definition of $W$.

\textit{Step 2: Showing positivity of the nonsynchronized solution $\vec{u}_{\e}$.} It now suffices to show the positivity of $\vec{u}_{\e}$ for small $\e>0$. Let $\vec{u}_{\e}=(u_{1,\e},\cdots,u_{n,\e})$ and $W_0=(w_1,\cdots,w_n)$. Fix $i\in\{1,\dots,n\}$ and define the scalar Schr\"odinger operator
\begin{equation*}
 L_{i,\e}:=-\Delta+\lambda-\sum_{j=1}^n a_{ij}u_{j,\e}^2.
\end{equation*}
Note that $L_{i,\e}u_{i,\e}=0$.
Since $w_i\not\equiv0$, $u_{i,\e}\not\equiv0$ for small $\e>0$.  Hence the principal eigenvalue
$\mu_{i,\e}:=\lambda_1(L_{i,\e},\Omega_\e)$ satisfies
\begin{equation*}
 \mu_{i,\e}\leq0.
\end{equation*}

We now claim that $\mu_{i,\e}=0$. Assume by contradiction that there exists a sequence $\e\to0$ for which $\mu_{i,\e}<0$.  Let $\phi_\e>0$ be the corresponding principal eigenfunction, normalized by $ \|\phi_\e\|_{L^2(\Omega_\e)}=1.$
Its zero extension satisfies
\begin{equation}\label{eq:eigen-q}
 \int_{B_5(0)}\bigg(|\nabla\phi_\e|^2+\lambda\phi_\e^2
 -\sum_{j=1}^n a_{ij}u_{j,\e}^2\phi_\e^2\bigg)dx=\mu_{i,\e}<0.
\end{equation}
Because $2\leq N\leq3$ and $u_{j,\e}$ is bounded in $H_0^1(B_5(0))$,
$\sum_{j=1}^n a_{ij}u_{j,\e}^2$ is bounded in $L^p(B_5(0))$ for some $p>N/2$.  For any small $\delta > 0,$ the following estimate
\[
 \int_{B_5(0)} \sum_{j=1}^n a_{ij}u_{j,\e}^2\phi^2dx
 \leq \delta\|\nabla\phi\|_{L^2(B_5(0))}^2+C_\delta\|\phi\|_{L^2(B_5(0))}^2
\]
holds uniformly in $\e$.  Choosing $\delta$ small in \eqref{eq:eigen-q}, we conclude
that $\{\phi_\e\}$ is bounded in $H_0^1(B_5(0))$.  Thus, after passing to a subsequence,
\begin{equation}\label{eq:phi-conv}
 \phi_\e\rightharpoonup\phi\text{ in }H_0^1(B_5(0)),
 \quad
 \phi_\e\to\phi\text{ in }L^q(B_5(0))
\end{equation}
for every $q<2^*$; in particular, $\|\phi\|_{L^2(B_5(0))}=1$, $\phi\geq0$, and $\phi\in H_0^1(\Omega_0)$.
Now,
\[
 \sum_{j=1}^n a_{ij}u_{j,\e}^2\to \sum_{j=1}^n a_{ij}w_j^2
\]
strongly in $L^p(B_5(0))$ for some $p>N/2$.  Therefore, using
\eqref{eq:phi-conv} and weak lower semicontinuity,
\begin{equation*}
 q_i(\phi):=
 \int_{\Omega_0}\bigg(|\nabla\phi|^2+\lambda\phi^2-\sum_{j=1}^n a_{ij}w_j^2\phi^2\bigg)dx
 \leq\liminf_{\e\to0}\mu_{i,\e}\leq0.
\end{equation*}
We note that 
$$\sum_{j=1}^n a_{ij}w_j^2=U_0^2 \ \ \text{ in } \ \ B_1(0),$$
\begin{equation*}
\sum_{j=1}^n a_{ij}w_j^2=
 \begin{cases}
 U_1^2\mathrm{\ if\ }1\leq i\leq m,\\
 \sum_{j=1}^m a_{ij}d_j^2U_1^2\mathrm{\ if\ }m+1\leq i\leq n
 \end{cases} \ \ \text{ in } \ \ B_{1}(3e_N).
\end{equation*}
Recall that $\sum_{j=1}^m a_{ij}d_j^2<1$ for $m+1\leq i\leq n$. Therefore, $q_i(\phi)=0$, and in particular, we obtain
\begin{equation}\label{eq:phi-kernel}
 \phi=\tilde{c}_iU_0+\tilde{d}_iU_1,
 \quad \tilde{c}_i,\tilde{d}_i\geq0,
 \quad (\tilde{c}_i,\tilde{d}_i)\neq(0,0).
\end{equation}
On the other hand, $\phi_\e$ and $u_{i,\e}$
correspond to the distinct eigenvalues $\mu_{i,\e}<0$ and $0$. Therefore,
\[
 \int_{\Omega_\e}\phi_\e u_{i,\e}=0.
\]
Passing to the limit and using \eqref{eq:phi-kernel}, we get 
\[
 0=\int_{\Omega_0}\phi w_i
 = c_i\tilde{c}_i
 \int_{B_1(0)}U_0^2dx+ d_i\tilde{d}_i\int_{B_1(3e_N)}U_1^2dx>0.
\]
This is a contradiction and we conclude that $\mu_{i,\e}=0$.

Note that the principal eigenspace on the domain $\Omega_{\e}$ is one-dimensional and is spanned by a strictly positive eigenfunction.  By $L_{i,\e}u_{i,\e}=0$,
$u_{i,\e}$ is a scalar multiple of the eigenfunction.  Because $u_{i,\e}\to {w_i}$ and $w_i$ is positive on $B_1(0)$, $u_{i,\e}>0$ in $\Omega_\e$ for all sufficiently small $\e>0$.  Since $i$ was arbitrary, this proves the positivity of $\vec{u}_{\e}$.
\end{proof}

\begin{remark}
    For $n=2$, the assumptions $(i)$, $(ii)$, and $(iii)$ in Definition \ref{admissible} imply $0< a_{12}<\min\{a_{11},a_{22}\}$, which in turn, imply $(iv)$ and $(v)$. Therefore, if $0 < a_{12}<\min\{a_{11},a_{22}\}$, then there exists a domain where there exists a nonsynchronized positive vector solution for some domain $\Omega$. Also for this case $0<a_{12}<\min\{a_{11},a_{22}\}$, when $\Omega =\mathbb R^N$, synchronization (therefore uniqueness) of positive vector solutions have been given recently in \cite{CLWY} (see also earlier partial results in \cite{WY, CZ1, ZW}). If $\min\{a_{11},a_{22}\}\leq a_{12}\leq \max\{a_{11},a_{22}\}$, then there exists no positive vector solution unless $a_{11}=a_{12}=a_{22}$ (e.g.,\cite{BW}).
     Corollary \ref{non} applies to the range $\sqrt{a_{11} a_{22} }\leq a_{12} \leq a_{22}$ for non-existence of positive vector solutions. If $a_{12}>\max\{a_{11},a_{22}\}$, any positive vector solution is synchronized, which is shown in \cite{WY} and now a special case of Theorem \ref{SYN}.
\end{remark}

\begin{remark}
    The assumptions $(iv)$ and $(v)$ in Definition \ref{admissible} can be weakened as in the following statement: 
    
    there exist a vector $\vec{d}=(d_1,\cdots,d_n)$ and an index set $S\subsetneq\{1,\cdots,n\}$ such that $d_i>0$ for all $i\in S$, $d_i=0$ for all $i\notin S$, the principal submatrix $(a_{ij})_{i,j\in S}$ is nonsingular, $\sum_{j=1}^n a_{ij}d_j^2=1$ for all $i\in S$, and $\sum_{j=1}^n a_{ij}d_j^2<1$ for all $i\notin S$.

    The matrix $A$ having two or more positive eigenvalues is a necessary condition for such $\vec{d}$ and $S$ to exist. However, it is not a sufficient condition. A counterexample where $A$ has two or more positive eigenvalues but no such $\vec{d}$ and $S$ satisfying $\sum_{j=1}^n a_{ij}d_j^2<1$ for all $i\notin S$ exists is given as
    $$
    A=\begin{pmatrix}
        \frac12&1&1\\
        1&1&\frac12\\
        1&\frac12&1
    \end{pmatrix}.
    $$
    In fact, $A$ having two positive eigenvalues imply the existence of such $\vec{d}$ and $S$ with $\sum_{j=1}^n a_{ij}d_j^2\leq 1$ for all $i\notin S$ instead of $\sum_{j=1}^n a_{ij}d_j^2<1$ for all $i\notin S$. The possibility $\sum_{j=1}^n a_{ij}d_j^2=1$ for some $i\notin S$ is what blocks us from removing the condition $(v)$ in Definition \ref{admissible}.
\end{remark}

\noindent{\bf Acknowledgement}. 
The first author was supported by the National Research Foundation of Korea(NRF) grant funded by the Korea government(MSIT)(No.NRF-2023R1A2C1005734).
The third author is grateful to the Department of Mathematical Sciences at KAIST for hosting his visit in 2023 when the work was initiated.


\begin{thebibliography}{1}
\footnotesize
\bibitem{AC}
{A. Ambrosetti, E. Colorado,}
 { Standing waves of some coupled nonlinear Schr\"odinger equations.}
{\sl J. Lond. Math. Soc.} {\bf 75} (2007), 67--82.


\bibitem{ACR} A. Ambrosetti, E. Colorado, D. Ruiz,
    {Multi-bump solitons to linearly coupled systems of nonlinear Schr\"odinger equations.}
    {\sl Calc. Var. PDEs} {\bf 30} (2007) p85--112.



\bibitem{BDW} T. Bartsch, E. N. Dancer, Z.-Q. Wang,
    { A Liouville theorem, a-priori bounds, and bifurcating branches of positive solutions for a nonlinear elliptic system.}
    {\sl Calc. Var. PDEs} {\bf 37} (2010) 345--361.

\bibitem{BW}
   {T. Bartsch, Z.-Q. Wang,}
 { Note on ground states of nonlinear Schr\"odinger systems.}
  {\sl J. Part. Diff. Equ.} {\bf 19} (2006), 200--207.

\bibitem{BWW} T. Bartsch, Z.-Q.  Wang, J. Wei,
 {   Bound states for a coupled Schr\"odinger system.}
    {\sl J. Fixed Point Theory Appl.} {\bf 2} (2007) 353--367.


\bibitem{B1} J. Byeon,
{ Existence of large  positive solutions
of some nonlinear elliptic equations on singularly perturbed domains}.
Comm. in P. D. E. {\bf   22} (1997), 1731 -- 1769.

\bibitem{B2} J. Byeon, {Standing waves of nonlinear Schrödinger
systems with all attractive forces.}
A survey paper in SPECIAL ISSUE : 100 years of the Journal of the London Mathematical Society,
{\sl J. Lond. Math. Soc.} (2026)

\bibitem{BKS} J. Byeon, O. Kwon, J. Seok, 
{ Positive vector solutions for nonlinear Schr\"odinger systems with strong interspecies attractive forces}. J. Math. Pures Appl. {\bf 143} (2020), 73--115.

\bibitem{CLLL} S. Chang, C.S. Lin, T.C. Lin, W. Lin,
 {Segregated nodal domains of
two-dimensional multispecies Bose-Einstein condensates}. {\sl Phys. D},
{\bf 196} (2004), 341--361.


\bibitem{CLWY} H. Chen, Y.Liu, J. Wei, W. Yang,
{ On Sirakov’s equal-frequency uniqueness conjecture}, arXiv:2607.28279v1.




\bibitem{CZ1} Z. Chen and  W. Zou, 
An optimal constant for the existence of least energy
solutions of a coupled Schr\"odinger system,
{\sl Calc. Var. Partial Differential Equations} {\bf 48} (2013) 695--711.

\bibitem{CTV} M. Conti, S. Terracini, G. Verzini,
{ Nehari's problem and competing species system.}
    {\sl Ann. I. H. Poincar\'e} {\bf 19} (2002), 871--888.
    
  \bibitem{Corr1} S. Correia,
{\em Characterization of ground-states for a system of M
coupled semilinear Schr\"odinger equations
and applications,}
{\sl J. Differential Equations} {\bf 260}(2016), 3302--3326.

\bibitem{Corr2}  S. Correia, 
{\em Ground-states for systems of  M  coupled semilinear Schr\"odinger equations with attraction-repulsion effects: characterization and perturbation results,}
{\sl Nonlinear Anal.} {\bf 140} (2016), 112--129.  

\bibitem{CFT}  S. Correia, F. Oliveira, H. Tavares, 
 Semitrivial vs. fully nontrivial ground states in cooperative cubic Schr\"odinger systems with  $d \ge 3$  equations
{\sl J. Funct. Anal.} {\bf 271} (2016),  2247--2273.


\bibitem{D}
E.N. Dancer,
{ The effect of domain shape on the number of positive solutions of certain nonlinear equations}.
J. Differential Equations {\bf 74} (1988),120--156.

\bibitem{DWW} E. N. Dancer, J. Wei, T. Weth,
    {A priori bounds versus multiple existence of positive solutions for
    a nonlinear Schr\"odinger system.}
    {\sl Ann. I. H. Poincar\'e} {\bf 27} (2010), 953--969.


\bibitem{dFL} D.G. de Figueiredo,  O. Lopes,
{ Solitary waves for some nonlinear Schr\"odinger systems,}
{\sl Ann. Inst. H. Poincaré C Anal. Non Linéaire}{\bf  25} (2008), 149--161.


\bibitem{FMT} P. Felmer,  S. Martínez,  K. Tanaka 
{Uniqueness of radially symmetric positive solutions for $-\Delta u + u= u^p$ 
 in an annulus}
{\sl J. of Differential Equations}{bf 245}(2008), 1198--1209.


\bibitem{GNN} B. Gidas, W.-M. Ni, L. Nirenberg,
{Symmetry and related properties via the maximum principle.}
{\sl Comm. Math. Phys.} {\bf 68} (1979) no. 3, 209--243.


\bibitem{GT} D. Gilbarg, N. S. Trudinger,
Elliptic partial differential equations of second order.
Second edition. 224. Springer-Verlag, Berlin, (1983).


\bibitem{IT} N. Ikoma, K. Tanaka,
{A local mountain pass type result for a system
of nonlinear Schr\"odinger equations.}
{\sl Calc. Var. Partial Differ.}{\bf 40} (2011) 449--480

\bibitem{LW1}
  {T.-C. Lin, J. Wei,}
  { Ground state of $N$ coupled nonlinear Schr\"odinger equations
in $\R^n, n\leq3$.}
 {\sl Comm. Math. Phys.} {\bf 255} (2005), 629--653.

\bibitem{LW2}
    { T.-C. Lin, J. Wei,}
    { Spikes in two coupled nonlinear Schr\"{o}dinger equations.}
    {\sl Ann. Inst. H. Poincar\'{e} } {\bf 22} (2005), 403--439.

\bibitem{L} P. L. Lions,
{The concentration-compactness principle in the calculus of variations.
The locally compact case, part II.}
{\sl Ann. Inst. H. Poincar\'{e} } {\bf 1} (1984), 223--283.



\bibitem{LcyW1} C. Liu, Z.-Q. Wang, A complete classification of ground-states for a
coupled nonlinear Schr\"odinger system.
{\sl Communications On
Pure And Applied Analysis}
{\bf 16} (2017), 115--130.

\bibitem{LLC} 
H. Liu, Z. Liu and J. Chang, Existence and uniquiness of positive solutions of nonlinear
Schr\"oodinger systems, {\sl Proceedings of the Royal Society of Edinburgh: Section A Mathematics},
{\bf 145} (2015), 365–-390.

\bibitem{LLW} J. Liu, X. Liu, Z.-Q. Wang, {Multiple mixed states of nodal solutions for nonlinear Schr\"odinger systems.}
{\sl Calc. Var. PDEs}, {\bf 52} (2015), 565--586.


\bibitem{LWz1} Z. Liu, Z.-Q. Wang,
    {Multiple bound states of nonlinear Schr\"odinger systems.}
    {\sl Comm. Math. Phys.} {\bf 282} (2008) 721--731.

\bibitem{LWz2} Z. Liu, Z.-Q. Wang,
   { Ground states and bound states of a nonlinear Schr\"odinger system.}
    {\sl Advanced Nonlinear Studies} {\bf 10} (2010) 175--193

\bibitem{MMP}
    {L.A. Maia, E. Montefusco, B. Pellacci,}
     {Positive solutions for a weakly coupled nonlinear Schr\"{o}dinger system. }
     {\sl J. Diff. Equ.} {\bf 299} (2006), 743--767.

\bibitem{MS} M. Mitchell, M. Segev, {Self-trapping of inconherent
white light. } {\sl Nature}, {\bf 387} (1997), 880--882.

\bibitem{MPS}
     {E. Montefusco, B. Pellacci, M. Squassina,}
    {Semiclassical states for weakly coupled nonlinear Schr\"{o}dinger systems. }
    {\sl J. Eur. Math. Soc.} {\bf 10} (2008), 41--71.

\bibitem{NR} B. Noris, M. Ramos,
   { Existence and bounds of positive solutions for a nonlinear Schr\"odinger system.}
    {\sl Proceedings of the AMS} {\bf 138} (2010) 1681--1692.

\bibitem{NTTV}
{B. Noris, H. Tavares, S. Terracini, G. Verzini,}
{ Uniform H\"older bounds for nonlinear Schr\"odinger systems with strong competition.}
 {\sl Comm. Pure and Appl. Math.} {\bf 63} (2010), 267--302.

\bibitem{PW} S. Peng, Z.-Q. Wang, {Segregated and Synchronized Vector Solutions for Nonlinear Schr\"{o}dinger System.,}
{\sl Arch. Ration. Mech. Anal.} {\bf 208} (2013), 305--339.

\bibitem{R}
Ch. R\"uegg et al, {Bose-Einstein condensation of the triple states
in the magnetic insulator TlCuCl$_3$.} 
{\sl Nature}, {\bf 423} (2003), 62--65.


\bibitem{SW1} Y. Sato, Z.-Q. Wang,
    {On the multiple existence of semi-positive solutions for a nonlinear Schr\"odinger system.}
	{\sl Ann. Inst. H. Poincare } {\bf 30} (2013), 1--22.



\bibitem{S} B. Sirakov,
   { Least energy solitary waves for a system of nonlinear Schr\"odinger equations in $\R^n$.}
    {\sl Comm. Math. Phys.} {\bf 271} (2007), 199--221.

\bibitem{S1} N. Soave, {On existence and phase separation of solitary waves
for nonlinear Schr\"odinger systems modelling
simultaneous cooperation and competition.} {\sl Calc. Var. PDEs} {\bf 53 } (2015), 689--718.

\bibitem{ST} N. Soave and H. Tavares,
{New existence and symmetry results for least energy positive solutions of Schr\"odinger systems with mixed competition and cooperation terms.}
{\sl J. Differential Equations} {\bf 261} (2016), 505–537.


\bibitem{JTSchwartz}
J. T. Schwartz, 
Nonlinear functional analysis, 
Notes on Mathematics and its Applications,
Gordon and Breach Science Publishers, New York-London-Paris, 1969.



\bibitem{TTVW}
H. Tavares, S. Terracini, G. Verzini, T. Weth,
{Existence and nonexistence of entire solutions for non-cooperative cubic elliptic systems.}
{\sl Comm. Partial Differential Equations} {\bf 36} (2011), 1988--2010.


\bibitem{TV}
     {S. Terracini, G. Verzini,}
     {Multipulse Phase in $k$-mixtures of Bose-Einstein condensates.}
     {\sl Arch. Rat. Mech. Anal.} {\bf 194} (2009), 717--741.

\bibitem{TW1}
{R. Tian, Z.-Q. Wang,}
{ Multiple solitary wave solutions of nonlinear Schr\"{o}dinger systems.}
{\sl Topo. Meth. Non. Anal.} {\bf 37} (2011), 203--223.

\bibitem{WW1}
Z.-Q. Wang, M. Willem, {Partial symmetry of vector solutions for elliptic systems.} {\sl Journal d' Analyse Mathematique}, {\bf 122} (2014), 69--85.

\bibitem{WW11}
{J. Wei, T. Weth, T.} {Nonradial symmetric bound states for
a system of two coupled Schr\"odinger equations.} {\sl Rend. Lincei Mat.
Appl.} {\bf 18} (2007),  279--293.

\bibitem{WW12}
{J. Wei, T. Weth, T.}
{Radial solutions and phase separation in a system of two
coupled Schr\"odinger equations.}
{\sl Arch. Rat. Mech. Anal.} {\bf 190} (2008), 83--106.

\bibitem{WY}
	J. Wei, W. Yao, {Uniqueness of positive solutions to some coupled nonlinear Schr\"{o}dinger equations.} {\sl Commun. Pure Appl. Anal.} {\bf 11} (2012), 1003-1011.


\bibitem{WZZ}
J. Wei, X. Zhong, W. Zou, On Sirakov's open problem and related topics. Ann. Sc. Norm. Super. Pisa Cl. Sci. {\bf 23} (2022), 959--992.

\bibitem{ZW} L. Zhou and Z.-Q. Wang, 
Uniqueness of positive solutions to some Schr\"odinger systems, 
Nonlinear Anal. 195 (2020), 111750. 

\end{thebibliography}
\end{document}